\documentclass[a4paper]{article}

\usepackage[english]{babel}
\usepackage[T1]{fontenc}

\usepackage[a4paper,top=3cm,bottom=2cm,left=3cm,right=3cm,marginparwidth=2cm]{geometry}

\newcommand{\T}{\mathbb{T}}

\usepackage[colorinlistoftodos]{todonotes}
\usepackage[colorlinks=true, allcolors=blue]{hyperref}
\usepackage{makeidx}
\usepackage{amssymb}
\usepackage{url}
\usepackage[square,sort,comma,numbers]{natbib}
\usepackage{xcolor}
\usepackage{hyperref}
\usepackage{pdfcomment}
\usepackage{subcaption}
\usepackage{enumitem}
\usepackage{framed}
\usepackage{graphicx}
\usepackage{longtable}
\usepackage{amsfonts}
\usepackage{amsmath}
\usepackage{amsthm}
\usepackage{authblk}
\usepackage[noend,ruled,noline,linesnumbered]{algorithm2e}
\usepackage{setspace} 
\usepackage{fancyhdr}
\usepackage{graphicx} 

\newtheorem{definition}{Definition}[section]
\newtheorem{lemma}{Lemma}[section]
\newtheorem{theorem}{Theorem}[section]
\newtheorem{remark}{Remark}[section]
\newtheorem{proposition}{Proposition}[section]

\newtheorem{assumption}{Assumption}[section]
\newtheorem{example}{Example}[section]
\title{Observable Lyapunov Exponents for Globally Coupled Maps in the Mean-Field Limit}
\date{}
\author[1]{Masood Ahmad}
\author[1]{Matteo Tanzi}
\affil[1]{Department of Mathematics, King's College London, London, United Kingdom}

\begin{document}
\maketitle

\begin{abstract}
Large coupled systems may be chaotic at the microscopic level while exhibiting stable collective behaviour at the macroscopic level. Distinguishing between these two forms of instability has been a problem in the physics literature, and several approaches have been proposed. In this paper, we address this problem by introducing and studying \emph{observable Lyapunov exponents}, which measure the rate at which perturbations of initial conditions grow or decay when viewed through a chosen observable, rather than in the full phase space. For globally coupled maps, we consider finite-time observable Lyapunov exponents and define their mean-field counterpart by first taking the infinite-system limit and then the long-time limit. Our main result shows that, for a broad class of symmetric macroscopic observables, these exponents turn out to be determined by the linearisation of the self-consistent transfer operator, which is the nonlinear  operator governing the evolution of the population distribution. We then apply this result to weakly coupled uniformly expanding maps and prove that their mean-field Lyapunov exponents are negative near a stationary mean-field state. Thus, although the microscopic dynamics is chaotic,  perturbations decay at the macroscopic level. Our results provide a rigorous framework for distinguishing microscopic from macroscopic chaos in large interacting systems.
\end{abstract}

\section{Introduction}
\subsection{Globally Coupled Maps}
Globally Coupled Maps (GCMs) are mathematical models for large populations of interacting units. Each unit follows the same local dynamics and is coupled to all others through a mean-field term determined by the population average. Its evolution therefore combines its intrinsic dynamics with an interaction generated by the collective state of the population. GCMs are often studied in the thermodynamic limit, with the system's size limiting to infinity, where the system is described by the distribution of states rather than by tracking the individual units. The evolution of this distribution is governed by a self-consistent operator, which we define rigorously in Section~\ref{setting}.

GCMs provide simple yet paradigmatic examples of complex systems and are of considerable interest as dynamical systems in their own right: despite their elementary formulation, they exhibit a remarkably wide range of dynamical behaviours. Through numerical studies of globally coupled logistic maps, Kaneko~\cite{kaneko1989chaotic,kaneko1990clustering} identified several distinct dynamical regimes: \emph{synchronised}, \emph{ordered}, \emph{partially ordered}, and \emph{turbulent}. These regimes arise as the initial conditions, coupling strength, and parameter of the logistic family are varied, and are distinguished by the extent to which the units organise into synchronised clusters or remain apparently uncorrelated. A closely related phenomenon had previously been observed for coupled continuous-time oscillators in~\cite{wiesenfeld1989attractor}, where it was termed \emph{attractor crowding}: the number of attractors grows factorially with the system size.
% Furthermore, \cite{kaneko1989chaotic,shimada2000resolution} observed the phenomenon of \emph{posi-nega switching}, where units form two clusters of opposite sign and periodically exchange membership while the large-scale organisation of the system remains invariant. synchronisation was also investigated in \cite{cosenza1998synchronization}, using the same globally coupled framework as in \eqref{kaneko globally} but with local dynamics given by the logarithmic map $f(x)=a+\ln|x|$.\\
The most interesting regime for our purposes is the turbulent regime, where the states of the units evolve chaotically \cite{kaneko1990globally, kaneko1992mean, pikovsky1994globally, kaneko1995remarks, pikovsky1994collective, shibata1997heterogeneity}.

Globally coupled systems can exhibit chaos at two distinct levels: microscopic and macroscopic. Microscopic chaos is characterised by the conventional Lyapunov spectrum, which measures the growth of the distance between nearby initial conditions in the full phase space. Macroscopic chaos instead concerns the sensitive dependence from the initial condition of "global" behaviour, as measured for example by the mean-field. Whether the conventional Lyapunov spectrum captures the emergence of macroscopic chaos, or new approaches are needed, has so far remained unclear and has been object of study mostly in the physics literature (that we review below).

 Although these results provide important insights, they rely largely on numerical studies of specific systems and observables. The main contribution of this paper is to introduce a theoretical framework in which this discussion can be put on rigorous footing with  the introduction of the notion of an \emph{observable Lyapunov exponent}, which quantifies the rates at which observations along an orbit diverge from or converge towards those along a nearby perturbed orbit. For globally coupled systems, we focus on  mean-field observables, that capture global features of the system, and we rigorously relate the corresponding \emph{mean-field observable Lyapunov exponents} to the derivative of the self-consistent transfer operator, which is the nonlinear operator governing the evolution of the mean field in the thermodynamic limit. These exponents characterise macroscopic chaos by measuring the divergence of observable values along nearby trajectories, rather than that of the microscopic trajectories themselves. This distinction is fundamental: two trajectories may diverge exponentially while the values of a low-dimensional observable along them separate much more slowly, or not at all. Indeed, for uniformly expanding coupled systems, we prove that the mean-field observable Lyapunov exponents are negative, showing that microscopic chaos does not necessarily imply macroscopic chaos.

\paragraph{Acknowledgements.}
The authors thank Edmilson Roque dos Santos for helpful discussions. M.T. acknowledges support from EPSRC--FAPESP Grant No.~2023/13706 and EPSRC Grant No.~UKRI1021. During the preparation of this manuscript, generative AI tools were used for language editing/assistance with proofreading. All resulting suggestions were independently checked and edited by the authors, who take full responsibility for the contents of the manuscript.

\subsection{Microscopic vs Macroscopic Chaos in GCMs}

The distinction between microscopic and macroscopic chaos has  been investigated so far largely through numerical and heuristic approaches in the physics literature. The earliest studies of macroscopic chaos were carried out independently in \cite{shibata1998collective} and~\cite{cencini1999macroscopic}. Both works use finite-size Lyapunov exponent (FSLE) approaches.  \cite{shibata1998collective}  investigated heterogeneous (logistic map) GCMs and analysed collective behaviour through mean-field 
observables, such as
\begin{equation}
h(t)=\frac{1}{N}\sum_{i=1}^{N}h(x_i(t)).
\end{equation} while \cite{cencini1999macroscopic} studied both homogeneous (tent map) and heterogeneous (logistic map) GCMs, focusing  on macroscopic 
observables such as the mean (centre-of-mass variable)
\begin{equation}{\label{centerofmass}}
m(t)=\frac{1}{N}\sum_{i=1}^{N}x_i(t)
\end{equation}
% and the fluctuation variable
% \begin{equation}{\label{fluctuation}}
% \sigma(t)=
% \sqrt{
% \frac{1}{N}\sum_{i=1}^{N}x_i^2(t)
% -
% \left(\frac{1}{N}\sum_{i=1}^{N}x_i(t)\right)^2
% }, 
% \end{equation}
Both works brought evidence that macroscopic chaotic behaviour of these quantities cannot be revealed at the, level of microscopic dynamics. Instead, finite perturbations of the initial condition comparable to or greater than the collective fluctuation scale, $O(N^{-1/2})$ are required.

 A different viewpoint for detecting macroscopic chaos was introduced in \cite{takeuchi2009lyapunov}, and further developed in \cite{takeuchi2013collective}. The authors argue that macroscopic chaotic behaviour is encoded in the standard Lyapunov analysis through the structure of covariant Lyapunov vectors (CLVs). By examining the localisation properties of these vectors, they distinguished between \emph{microscopic modes}, which are localised and have only a few appreciably nonzero components, and \emph{collective modes}, in which most components are of comparable magnitude. The delocalisation is quantified using the inverse participation ratio (IPR):
$
Y_2^{(j)} = \left\langle \sum_i |\delta x_i^{(j)}|^4 \right\rangle_t,$
where the CLV is normalised such that $\sum_i |\delta x_i^{(j)}|^2=1$. For a localised mode, $Y_2^{(j)} = O(1)$, while for a collective delocalised mode, $Y_2^{(j)} \sim N^{-1}$. Using this criterion, they showed that Lyapunov exponents associated with collective modes converge to the leading exponent of the macroscopic Perron--Frobenius dynamics. This suggests that collective chaos is encoded within the conventional Lyapunov spectrum. However, as  already observed in that paper and as we make precise here, microscopic and macroscopic chaos need not coincide: coupled uniformly expanding systems may be chaotic at the microscopic level while exhibiting no macroscopic chaos.

 Attention has been given also to the study of the Lyapunov spectrum and its scaling with the dimension of the system. In~\cite{takeuchi2011extensive}, the authors argued that the Lyapunov spectrum of GCMs is not fully extensive; while an extensive band exists in the central part of the spectrum, sub-extensive bands persist at both ends even in the thermodynamic limit. Consequently, the largest Lyapunov exponent approaches its asymptotic value logarithmically:
$
\lambda_{\infty}-\lambda_{1}(N)\sim\frac{c}{\ln N}.
$
Revisiting this, Velasco, López and  Pazó \cite{velasco2021nonuniversal} borought instead evidence that the convergence of the largest Lyapunov exponent is strongly nonuniversal, with finite-size corrections potentially following a power-law scaling:
$
\lambda_{\infty}-\lambda_{1}(N)\sim\frac{c}{N^{\gamma}}, 0<\gamma\le1.
$
However, the Lyapunov spectrum characterises the instability of the full microscopic dynamics and does not, by itself, determine whether chaos emerges at the macroscopic level.

Another notable work studying Lyapunov exponents for coupled systems is \cite{koiller2010coupled} where the authors considered coupled-map networks over arbitrary finite graphs, focusing on the effect of network structure and coupling strength on the Lyapunov spectrum. For networks of circle diffeomorphisms or expanding maps, they rigorously prove that increasing the coupling strength produces a cascade of bifurcations in which Lyapunov exponents become negative and unstable directions are progressively stabilised. Their work therefore provides a geometric description of how coupling can suppress microscopic chaos.

\subsection{Setup and Main Results}
{\label{setting}}

\subsubsection{Observable Lyapunov exponents} We begin by recalling the definition of  Lyapunov exponents (see for example \cite{viana2014lectures} for a thorough treatment). Let $M$ be a Riemannian manifold and $F:M\to M$ be a $C^1(M,M)$ map, and denote 
$D_{\mathbf{x}}F^t$  the differential of the $t$-th iterate of $F$ at the point 
$\mathbf{x} \in M$. We are interested in the exponential growth rate of $\| D_{\mathbf{x}}F^t(\mathbf{v})\|$ for a vector $\mathbf{v} \in T_{\mathbf{x}}M \setminus \{0\}$
which is called Lyapunov exponent of $F$ at the point $\mathbf{x}$ in the direction $\mathbf{v}$ and is defined as
\begin{equation}\label{Eq:StLyap}
\lambda(\mathbf{x}, \mathbf{v}) := \lim_{t \to \infty} \frac{1}{t} \log \|D_{\mathbf{x}}F^t(\mathbf{v})\|
\end{equation}
when the limit exists\footnote{If the limit does not existence of limits, we use the upper and lower Lyapunov exponents,
defined by replacing the limit with $\limsup$ and $\liminf$ respectively}. Lyapunov exponents provide asymptotic rates of convergence and divergence of nearby orbits.

For GCMs in the large-$N$ regime, the high dimensionality of the phase space makes tracking individual trajectories and computing Lyapunov exponents  difficult; moreover, these microscopic quantities might not directly capture the macroscopic properties one is interested in. Instead, one may study the evolution of the system by measuring certain observables, which provide a lower-dimensional and macroscopic description of the underlying dynamics. This motivates the study of stability properties from the viewpoint of observables which are captured by the following notion of \emph{observable Lyapunov exponent}.  
 
\begin{definition}[Observable Lyapunov exponent]{\label{Lyapunovdefinition}}
Let $M$ be a Riemannian manifold, $F:M\to M$ a $C^1(M, M)$ map, and  $\psi:M\to\mathbb{R}$ be and an $C^1(M, \mathbb{R})$ observable. Define 
\begin{equation}{\label{Lyaponuvunderobservable}}
\lambda_\psi(\mathbf{x},\mathbf{v}) := \lim_{t \to \infty} \frac{1}{t} \log \big| D_{\mathbf{x}}(\psi \circ F^t)(\mathbf{v}) \big|,
\end{equation}
whenever this limit exists. We call $\lambda_\psi(\mathbf{x},\mathbf{v})$ an observable Lyapunov exponent.
\end{definition}

An observable Lyapunov exponent measures the exponential rate at which the values of a given observable along an orbit converge to or diverge from those along an infinitesimally perturbed nearby orbit. Notice that, by the chain rule, this is equivalent to the growth rate of a tangent vector under the linearised dynamics after it is "projected" with the differential of the observable $\psi$. The usual Lyapunov spectrum and the observable Lyapunov spectrum may or may not coincide. In Section~\ref{Lyapunov under observable}, we explore this relationship through examples and provide, in simple setups, criteria  for when the two spectra agree.
 
\subsubsection{GCMs and Mean-Field Observable Lyapunov Exponents} Our primary interest is in the study of observable Lyapunov exponents associated with macroscopic observables, such as the mean-field, for GCMs. Therefore, we now introduce the class of GCMs considered throughout this work and that can be also found for example in \cite{tanzi2022meanfieldcoupledsystemsselfconsistent}. 
Let M = \(\mathbb{T}=\mathbb{R}/\mathbb{Z}\)\footnote{We identify the torus $\mathbb{T}=\mathbb{R}/\mathbb{Z}$ with the circle represented by the interval $[0,1)$. Addition and subtraction on $\mathbb{T}$ are understood modulo $1$. Derivatives of maps defined on $\mathbb{T}$ are understood through their lifts to $\mathbb{R}$, and the mean value theorem is applied to the corresponding lift.} denote the one-dimensional torus. For each \(N\in\mathbb{N}\), we consider the system of globally coupled maps
$
F^{(N)}:\mathbb{T}^N\to\mathbb{T}^N,
$
defined by
$
F^{(N)}(\mathbf{x})
=
\bigl(F^{(N)}_1(\mathbf{x}),\dots,F^{(N)}_N(\mathbf{x})\bigr),
$
where \(\mathbf{x}=(x_1,\dots,x_N)\in\mathbb{T}^N\) and
\begin{equation}{\label{globallycoupledmap}}
F^{(N)}_k(\mathbf{x})
=
f(x_k)
+
\frac1N
\sum_{\substack{j\neq k}}^{N}
h(x_k,x_j)
\pmod{1},
\qquad k=1,\dots,N.
\end{equation}
Here \(f:\mathbb{T}\to\mathbb{T}\) prescribes the local dynamics and \(h:\mathbb{T}^2\to\mathbb{R}\) is the coupling function. We are often interested in the study of the evolution of macroscopic observables like mean fields:
\begin{equation}\label{empirical}
\psi^{(N)}(x_1,\ldots,x_N)
=
\frac{1}{N}\sum_{k=1}^{N}\varphi(x_k).
\end{equation}

Supposing that $\psi^{(N)}$ is differentiable, \(\mathbf{x}\in\mathbb{T}^N\), \(\mathbf{v}\in\mathbb{R}^N \textbackslash \{0\}\), and \(t\in\mathbb{N}\), define
\[
\chi_{\psi, N,t}(\mathbf{x},\mathbf{v})
:=
\frac{1}{t}
\log
\left|
D_{\mathbf{x}}\left(\psi^{(N)}\circ (F^{(N)})^t\right)(\mathbf{v})\right|.
\]
Considering that we want to study GCMs in the large $N$ limit, we have two main choices at this point: 

i) take the limit \(t\to\infty\) first, yielding the observable Lyapunov spectrum for fixed \(N\), and then \(N\to\infty\) to find what is a limit for the Lyapunov spectrum;

ii) or take the limit \(N\to\infty\)   first, followed by the limit \(t\to\infty\).

The two options i) and ii) above are, in general, likely to give different results.  If one is interested in the macroscopic properties of the system, ii) seems the more appropriate choice and leads to the following definition

\begin{definition}[Mean-Field Observable Lyapunov Exponents for GCMs] For $\mathbf{x}\in \mathbb T^{\mathbb N}$ and $\mathbf{v}\in \mathbb R^{\mathbb N}$
\begin{equation}{\label{Lyapunovther}}
\chi(\mathbf{x},\mathbf{v})
:=
\lim_{t\to\infty}
\lim_{N\to\infty}
\chi_{\psi, N,t}(\mathbf{x}^{(N)},\mathbf{v}^{(N)}),
\end{equation}
when the limit exists, and where $\mathbf{x}^{(N)},\,\mathbf{v}^{(N)}\in \mathbb R^N$ are the first $N$ components of $\mathbf{x}$ and $\mathbf{v}$ respectively.
\end{definition}

\noindent The main result of this paper, shows that there is an expression for \eqref{Lyapunovther} in terms of the differential of the self-consistent transfer operator that we now recall. 

\subsubsection{Self-consistent Transfer Operator and its Differential}  For a system of $N$ coupled maps as in \eqref{globallycoupledmap}, with states $x_1,\ldots,x_N$, the distribution of the states is described by the empirical measure
\[
\mu_N=\frac{1}{N}\sum_{i=1}^N\delta_{x_i}.
\]
If, as $N\to\infty$, the empirical measures converge to a limiting distribution $\mu$, then, under suitable assumptions, the evolution of the distribution in the infinite limit is governed by the nonlinear operator
\[
\mathcal{F}(\mu)=(f_\mu)_*\mu,
\]
where $(f_\mu)_*$ denotes the push-forward operator associated with the map
\begin{equation}\label{fmu}
f_\mu(x)
=
f(x)
+
\int_{\mathbb{T}}h(x,y)\,d\mu(y)
\pmod{1}.
\end{equation}
The operator $\mathcal{F}$ is known as the \emph{self-consistent transfer operator}. Thus, in the thermodynamic limit, the distribution of the system evolves through repeated applications of $\mathcal{F}$. Further details of this derivation can be found in Lemma~1 of \cite{galatolo2022self} and Section~3 of \cite{tanzi2022meanfieldcoupledsystemsselfconsistent}.

The differential of the self-consistent transfer operator has been used to study the stability of its fixed points, which represent stationary macroscopic states. For general GCMs, the self-consistent transfer operator can fail to be Fréchet differentiable when regarded as a self-map of a single Banach space $(B,\|\cdot\|)$; see \cite{castorrini2025differential}. A more suitable framework involves a strong space $(B_s,\|\cdot\|_s)$ and a weak space $(B_w,\|\cdot\|_w)$ satisfying
$
B_s\subseteq B_w,\,
\|\cdot\|_s\geq \|\cdot\|_w.
$
In this setting, perturbations are measured in the strong norm, while the remainder in the linear approximation is controlled in the weak norm.

Let $P_s\subset B_s$ and $P_w\subset B_w$ denote the corresponding sets of probability measures, and define the zero-mass subspaces
\[
V_s=\left\{\nu\in B_s:\nu(\mathbb{T})=0\right\},
\qquad
V_w=\left\{\nu\in B_w:\nu(\mathbb{T})=0\right\}.
\]
We consider the self-consistent transfer operator
$
\mathcal{F}:P_s\longrightarrow P_s.
$
We assume that a sufficiently small neighbourhood of $\mu^*\in P_s$ admits a Banach-manifold structure modelled on $V_s$, so that the tangent space at $\mu^*$ can be identified with $V_s$.

\begin{definition}[Fréchet differentiability in the strong-to-weak sense]
We say that $\mathcal{F}$ is \emph{Fréchet differentiable at}
$\mu^*\in P_s$ in the strong-to-weak sense if there exists a linear operator
$
\mathbf{D}_{\mu^*}\mathcal{F}:V_s\longrightarrow V_s,
$
bounded when $V_s$ is equipped with the strong norm and the codomain with the weak norm, such that
\[
\lim_{\substack{\nu\in V_s,\ \mu^*+\nu\in P_s\\
\|\nu\|_s\to0}}
\frac{
\left\|
\mathcal{F}(\mu^*+\nu)
-\mathcal{F}(\mu^*)
-\mathbf{D}_{\mu^*}\mathcal{F}(\nu)
\right\|_w
}{
\|\nu\|_s
}
=0.
\]
\end{definition}

A weaker notion is that of Gâteaux differentiability. Whereas Fréchet differentiability requires the linear approximation to hold uniformly with respect to the direction of perturbation, Gâteaux differentiability only requires the existence of directional derivatives.

\begin{definition}[Gâteaux differentiability]
We say that $\mathcal{F}$ is \emph{Gâteaux differentiable at}
$\mu^*\in P_s$ if there exists a bounded linear operator
$
D_{\mu^*}\mathcal{F}:V_s\longrightarrow V_s
$
such that, for every admissible direction $\nu\in V_s$,
\[
D_{\mu^*}\mathcal{F}(\nu)
=
\lim_{\varepsilon\to0}
\frac{
\mathcal{F}(\mu^*+\varepsilon\nu)-\mathcal{F}(\mu^*)
}{
\varepsilon
},
\]
where the limit is taken in a specified topology on the target space. The measure
$D_{\mu^*}\mathcal{F}(\nu)$ is the Gâteaux derivative of $\mathcal{F}$ at
$\mu^*$ in the direction $\nu$. If $\nu$ has density $\rho$, we use
$D_{\mu^*}\mathcal{F}(\rho)$ to denote the density of
$D_{\mu^*}\mathcal{F}(\nu)$. If $\mathcal{F}$ is Fréchet differentiable at
$\mu^*$, then it is also Gâteaux differentiable there, and
$
D_{\mu^*}\mathcal{F}
=
\mathbf{D}_{\mu^*}\mathcal{F}.
$
\end{definition}

\begin{remark}
Although $V_s$ is equipped with the $B_s$-norm, the definition of the
Gâteaux derivative need not require convergence in this norm. More generally,
the limit may be taken in any topology on the target space that implies weak
convergence of measures. This is sufficient for our purposes, since weak
convergence allows us to pass to the limit against admissible test functions.
In particular, if
\[
\frac{
\mathcal{F}(\mu^*+\varepsilon\nu)-\mathcal{F}(\mu^*)
}{
\varepsilon
}
\longrightarrow
D_{\mu^*}\mathcal{F}(\nu)
\]
in such a topology, then
\[
\lim_{\varepsilon\to0}
\int_{\mathbb{T}}\psi\,d\left(
\frac{
\mathcal{F}(\mu^*+\varepsilon\nu)-\mathcal{F}(\mu^*)
}{
\varepsilon
}
\right)
=
\int_{\mathbb{T}}\psi\,
d\left(D_{\mu^*}\mathcal{F}(\nu)\right)
\]
for every admissible test function $\psi$. This property will be used in
Lemma~\ref{relationtodifferential}.
\end{remark}

\subsubsection{Mean-Field Observables} We now introduce the class of observables for which our results hold and which generalises the empirical observables defined in \eqref{empirical}. For each $N\in\mathbb{N}$, let
$
\psi^{(N)}:\mathbb{T}^N\longrightarrow\mathbb{R}.
$

\begin{assumption}[Symmetry and regularity]
\label{ass:observable-symmetry}
For every $N\in\mathbb{N}$, the observable $\psi^{(N)}$ belongs to
$C^2(\mathbb{T}^N,\mathbb{R})$ and is permutation invariant; that is,
\[
\psi^{(N)}(x_1,\ldots,x_N)
=
\psi^{(N)}(x_{\sigma(1)},\ldots,x_{\sigma(N)})
\]
for every $(x_1,\ldots,x_N)\in\mathbb{T}^N$ and every permutation
$\sigma\in S_N$.
\end{assumption}

\begin{assumption}[Mean-field scaling]
\label{ass:observable-scaling}
There exists a constant $C>0$, independent of $N$ and of the coordinate
indices, such that
\[
\begin{aligned}
\|\partial_k\psi^{(N)}\|_\infty
&\leq \frac{C}{N},
&& k=1,\ldots,N,\\
\|\partial_{kk}\psi^{(N)}\|_\infty
&\leq \frac{C}{N},
&& k=1,\ldots,N,\\
\|\partial_\ell\partial_k\psi^{(N)}\|_\infty
&\leq \frac{C}{N^2},
&& \ell\neq k.
\end{aligned}
\]
Here $\|\cdot\|_\infty$ denotes the supremum norm.
\end{assumption}

Assumptions~\ref{ass:observable-symmetry} and
\ref{ass:observable-scaling} are natural for macroscopic observables.
Permutation invariance ensures that all particles contribute symmetrically,
while the scaling of the first derivatives expresses that the influence of
any single particle vanishes as $N\to\infty$. Similarly, mixed contributions
from two distinct particles are of order $N^{-2}$.

Given a probability measure $\mu$ on $\T$, define the effective one-coordinate
observable by averaging over all coordinates except the $k$-th:
\[
\psi_{k,\mu}^{(N)}(s)
=
\int_{\mathbb{T}^{N-1}}
\psi^{(N)}(x_1,\ldots,x_{k-1},s,x_{k+1},\ldots,x_N)
\prod_{j\neq k}d\mu(x_j).
\]
This quantity measures the dependence of the macroscopic observable on the
$k$-th coordinate when all remaining coordinates are distributed according
to $\mu$. By Assumption~\ref{ass:observable-symmetry},
$\psi_{k,\mu}^{(N)}$ is independent of the choice of $k$.

\begin{assumption}[Limiting one-coordinate fluctuation]
\label{ass:observable-limit}
For every probability measure $\mu$ on $\mathbb T$, there exists a function
$u:\mathbb{T}\to\mathbb{R}$ such that
\[
\sup_{s\in\mathbb{T}}
\left|
N\left(
\psi_{k,\mu}^{(N)}(s)
-
\int_{\mathbb{T}}\psi_{k,\mu}^{(N)}(r)\,d\mu(r)
\right)
-u(s)
\right|
\longrightarrow 0
\qquad\text{ as }N\to\infty.
\]
\end{assumption}

Assumption~\ref{ass:observable-limit} ensures that the leading-order
fluctuation generated by a single coordinate has a well-defined limit. If $\varphi\in C^2(\mathbb{T},\mathbb{R})$, the empirical average
observable defined in \eqref{empirical}
satisfies Assumptions~\ref{ass:observable-symmetry}--\ref{ass:observable-limit}.
Indeed, the first two assumptions are immediately verified, and for the last one 
\[
N\left(
\psi_{k,\mu}^{(N)}(s)
-
\int_{\mathbb{T}}\psi_{k,\mu}^{(N)}(r)\,d\mu(r)
\right)
=
\varphi(s)-\int_{\mathbb{T}}\varphi(r)\,d\mu(r),
\]
so that Assumption \ref{ass:observable-limit} is satisfied for
$
u(s)
=
\varphi(s)-\int_{\mathbb{T}}\varphi(r)\,d\mu(r).$

\subsubsection{Main Results} We now introduce the probabilistic setting and notation used throughout this section.
Let $\mu_0$ be a probability measure on $\mathbb{T}$ with continuously differentiable density
$\rho_0$. For each $N\in\mathbb{N}$, we consider the finite-dimensional probability
space
$
(\mathbb{T}^N,\mathcal{B}(\mathbb{T}^N),\mathbb{P}_N),
$
where
$
\mathbb{P}_N=\mu_0^{\otimes N}.
$
Let
\[
\mathbf{x}^{(N)}=(x_1,\ldots,x_N)\in\mathbb{T}^N
\]
be the random vector whose components are independent and identically distributed
with law $\mu_0$.
To study the limit as $N\to\infty$, we consider the infinite product probability
space
$
(\mathbb{T}^{\mathbb{N}},\mathcal{B}(\mathbb{T}^{\mathbb{N}}),\mathbb{P}),
$
where
$
\mathbb{P}=\mu_0^{\otimes\mathbb{N}}
$ and 
$
\mathbf{x}^{\mathbb{N}}=(x_1,x_2,\ldots)\in\mathbb{T}^{\mathbb{N}}
$
be distributed according to
$\mathbb{P}$.
For each $N\in\mathbb{N}$, let
\[
\mathbf{v}^{(N)}=(v_1,\ldots,v_N)\in\mathbb{R}^N
\]
be a sequence of vectors satisfying
$
\sup_k |v_k|<\infty,
$
and assume that the Cesàro mean of the sequence $(v_k)_{k\geq1}$ converges to
some $\bar v\in\mathbb{R}$, namely,
\[
\bar v
:=
\lim_{N\to\infty}
\frac1N\sum_{k=1}^N v_k .
\]
\begin{definition}
For $l=1,\ldots,t-1$, let $\mu_l$ denote the distribution of
$f_{\mu_0}^{[l]}(x_k)$, when $x_k$ is distributed as $\mu_0$, and where
\begin{equation}\label{Eq:f^[m]}
f_{\mu_0}^{[l]}:=f_{\mu_{l-1}}\circ\cdots\circ f_{\mu_0}.
\end{equation}
\end{definition}

\begin{theorem} {\label{Maintheorem1}}
Consider the globally coupled map defined in \eqref{globallycoupledmap},
 with $f \in C^2(\mathbb{T}, \mathbb{R})$ and $h \in C^2(\mathbb{T}^2, \mathbb{R})$  and let
$
\psi^{(N)}:\mathbb{T}^N\to\mathbb{R}
$ be an observable satisfying the assumptions \ref{ass:observable-symmetry}--\ref{ass:observable-limit}. Assume that the self-consistent transfer operator associated with the globally coupled map system is Gâteaux differentiable at $\mu_l$, for $l=0,\ldots,t-1$.
Then, for every fixed $t\in\mathbb{N}$, $\mathbb{P}$-almost surely, 
\begin{align*}
\lim_{N\to\infty}&
\chi_{\psi, N,t}(\mathbf{x}^{(N)}(\omega),\mathbf{v}^{(N)})
=\\ 
&\quad=\frac{1}{t}\log \Bigg(|\bar{v}|\lim_{N\to\infty}N \Big|\int_{\mathbb{T}} ( \psi_{k,t}^{(N)}(s)-\mathbb{E}_{\mu_t}({\psi_{k,t}^{(N)}(s)}))
D_{\mu_{t-1}}\mathcal{F} \circ \cdots \circ D_{\mu_0}\mathcal{F}(\rho_{0}'(s))ds \Big| \Bigg)
\end{align*} where $
\psi_{k,t}^{(N)}(s)
=
\int_{{\mathbb{T}^{N-1}}}
\psi(x_1,\ldots,x_{k-1},s,x_{k+1},\ldots,x_N)
\prod_{j\neq k} d\mu_t(x_j)$.
\end{theorem}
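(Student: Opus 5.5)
The plan is to compute the directional derivative $D_{\mathbf{x}}(\psi^{(N)}\circ (F^{(N)})^t)(\mathbf{v})$ explicitly by the chain rule. I will then show that, after the natural normalisation, it converges $\mathbb{P}$-a.s. to a deterministic linear functional of the perturbation. Finally, I will identify this functional with the pairing between the centred one-coordinate observable $N(\psi^{(N)}_{k,t}-\mathbb{E}_{\mu_t}\psi^{(N)}_{k,t})$ and the linear response $D_{\mu_{t-1}}\mathcal{F}\circ\cdots\circ D_{\mu_0}\mathcal{F}$ applied to the infinitesimal deformation of $\rho_0$ produced by $\mathbf{v}$. Since $\frac1t\log$ is continuous away from $0$, the statement then follows by taking logarithms. (If the limit is $0$, both sides equal $-\infty$.)

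\textbf{Step 1 (microscopic linearisation).} Write $\mathbf{y}^{(l)}=(F^{(N)})^l(\mathbf{x}^{(N)})$ and $\mathbf{w}^{(l)}=D(F^{(N)})^l\mathbf{v}^{(N)}$. From \eqref{globallycoupledmap},
\[
w^{(l+1)}_k=\Big(f'(y^{(l)}_k)+\tfrac1N\textstyle\sum_{j\ne k}\partial_1h(y^{(l)}_k,y^{(l)}_j)\Big)w^{(l)}_k+\tfrac1N\textstyle\sum_{j\neq k}\partial_2 h(y^{(l)}_k,y^{(l)}_j)\,w^{(l)}_j .
\]
By induction on $l\le t$, I will show that $y^{(l)}_k=f^{[l]}_{\mu_0}(x_k)+o(1)$ uniformly in $k$, almost surely. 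This is propagation of chaos at fixed time: empirical averages of the $C^2$ function $h$ converge by the strong law of large numbers together with a Glivenko--Cantelli argument over the compact family $\{h(z,\cdot)\}_z$. The same induction gives $w^{(l)}_k=a_l(x_k)v_k+b_l(x_k)+o(1)$ uniformly in $k$, where $a_l=(f^{[l]}_{\mu_0})'$. The term $b_l$ is a deterministic function built from the limits of the averages $\frac1N\sum_j \partial_2h(\cdot,y_j^{(l)})w_j^{(l)}$. For these averages I use a strong law for independent, non-identically weighted bounded summands, $\frac1N\sum_j v_j g(x_j)\to\bar v\int g\,d\mu_0$ a.s. This is the point where only $\bar v$ survives.

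\textbf{Step 2 (observable side).} By Assumption~\ref{ass:observable-scaling} I Taylor-expand $\partial_k\psi^{(N)}(\mathbf{y}^{(t)})$. Two facts drive this step. First, the mixed derivatives are $O(N^{-2})$. Second, by exchangeability, $N\partial_k\psi^{(N)}(\mathbf{y}^{(t)})=N(\psi^{(N)}_{k,t})'(y^{(t)}_k)+o(1)$ a.s., where the empirical environment is replaced by $\mu_t$ at the cost of $o(1)$ by the same LLN-type argument. Assumption~\ref{ass:observable-limit} then identifies $N(\psi^{(N)}_{k,t})'$ with $u'$, via $C^1$ control coming from the $C^2$ bounds. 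Summing against $\mathbf{w}^{(t)}$ and applying the weighted strong law once more gives
\[
D(\psi^{(N)}\circ (F^{(N)})^t)(\mathbf{v}^{(N)})\to \int u'(f^{[t]}_{\mu_0}(x))\,\big(a_t(x)\bar v+b_t(x)\big)\,d\mu_0(x)\quad\text{a.s.}
\]
The limit depends on $\mathbf{v}$ only through $\bar v$.

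\textbf{Step 3 (identification with $D\mathcal{F}$).} Consider the curve $\mu_0^\varepsilon$ with density $\rho_0(s-\varepsilon\bar v)$, that is, the push-forward of $\mu_0$ under the translation by $\varepsilon\bar v$. Its derivative at $\varepsilon=0$ is $-\bar v\rho_0'$. Differentiating $\varepsilon\mapsto\int u\,d\mathcal{F}^t(\mu_0^\varepsilon)$ directly gives exactly the expression of Step 2: $a_t$ arises from transporting the shift, and $b_t$ arises from the variation of $f_{\mu_l}$ through the coupling. On the other hand, Gâteaux differentiability at each $\mu_l$, combined with Lemma~\ref{relationtodifferential} (pairing against admissible test functions), gives the same derivative as $\int u\,D_{\mu_{t-1}}\mathcal{F}\circ\cdots\circ D_{\mu_0}\mathcal{F}(-\bar v\rho_0')\,ds$. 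Replacing $u$ by its finite-$N$ approximant $N(\psi^{(N)}_{k,t}-\mathbb{E}_{\mu_t}\psi^{(N)}_{k,t})$ and pulling out $|\bar v|$ yields the right-hand side; the sign is absorbed by the absolute value.

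\textbf{Main obstacle.} I expect the hardest step to be this chain rule for the composition. Gâteaux differentiability alone does not give a chain rule. I will avoid needing one abstractly by differentiating along the explicit smooth curve $\varepsilon\mapsto\mathcal{F}^l(\mu_0^\varepsilon)$. For this curve, every iterate is a push-forward by $C^2$ maps depending smoothly on $\varepsilon$, so the difference quotients converge in a topology stronger than weak convergence. This lets me apply the one-step Gâteaux derivative inductively, with the remainder controlled at each step.
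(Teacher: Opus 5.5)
Your proof is correct, and it reaches the formula by a somewhat different route from the paper.

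\emph{How the two routes differ.} The paper proceeds in four stages:
(i) it splits the derivative into the diagonal part $G_{1,N}$ and the off-diagonal part $G_{2,N}$;
(ii) it controls $\sum_{j\neq k}\partial_jF_k^t v_j$ by McDiarmid and an induction on its conditional expectation (Lemmas~\ref{MCinequility} and~\ref{g_t(x_k)});
(iii) it replaces everything by $H_{1,N}+H_{2,N}$ and concentrates that whole sum around its mean;
(iv) it evaluates the mean by exchangeability, change of variables and integration by parts, so $\psi^{(N)}_{k,t}$ only appears after the other coordinates are integrated out.
You instead propagate the tangent vector itself. That recursion closes using only laws of large numbers for independent weighted sums, and your $b_l$ is exactly $\bar v\,g_l$. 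You then localise the observable coordinate by coordinate.

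\emph{Two points in Step~2 need precision.} First, replacing $N\partial_k\psi^{(N)}(\mathbf y^{(t)})$ by $N(\psi^{(N)}_{k,t})'(y^{(t)}_k)$ is a bounded-differences (McDiarmid) statement, not an LLN. Moving one coordinate $\ell\neq k$ changes it by at most $N\|\partial_\ell\partial_k\psi^{(N)}\|_\infty\le C/N$. Second, the upgrade of Assumption~\ref{ass:observable-limit} to $C^1$ does hold. The bounds on $\partial_k\psi^{(N)}$ and $\partial_{kk}\psi^{(N)}$ make $N(\psi^{(N)}_{k,t})'$ uniformly bounded and equi-Lipschitz. Arzel\`a--Ascoli plus uniqueness of the limit then give $u\in C^1$ and $N(\psi^{(N)}_{k,t})'\to u'$ uniformly.

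\emph{What each route buys.} Yours proves explicitly that the inner limit exists, in the pulled-back form $\bar v\int_{\mathbb T}u'\bigl(f^{[t]}_{\mu_0}(x)\bigr)\bigl((f^{[t]}_{\mu_0})'(x)+g_t(x)\bigr)\,d\mu_0(x)$. It never uses push-forward densities of $f^{[t]}_{\mu_0}$, which in Theorem~\ref{Maintheorem1} need not be a local diffeomorphism. The paper's route works throughout with the finite-$N$ functions $\psi^{(N)}_{k,t}$ and never differentiates $u$.

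\emph{The obstacle you anticipate in Step~3 is not really there.} The statement involves only the composition of the linear operators $D_{\mu_l}\mathcal F$, so no chain rule for $\mathcal F^t$ is needed. Lemma~\ref{relationtodifferential}, after one integration by parts, identifies $\int\varphi\,D_{\mu_{t-1}}\mathcal F\circ\cdots\circ D_{\mu_0}\mathcal F(\rho_0')\,ds$ with $-\int\varphi'\bigl(f^{[t]}_{\mu_0}(s)\bigr)\bigl((f^{[t]}_{\mu_0})'(s)+g_t(s)\bigr)\rho_0(s)\,ds$. This expression is $C^1$-continuous in $\varphi$, so it extends to $\varphi=u$. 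It also extends to the limit of $N(\psi^{(N)}_{k,t}-\mathbb E_{\mu_t}\psi^{(N)}_{k,t})$, and your $C^1$ convergence is exactly what that step requires. It then matches your Step~2 formula directly, which is how the paper concludes.

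The translation curve $\rho_0(\cdot-\varepsilon\bar v)$ nicely explains why $\rho_0'$ appears: a microscopic perturbation with Ces\`aro mean $\bar v$ acts macroscopically as a rigid shift of $\mu_0$. As a logical step, however, it needs a remainder estimate, such as a weak-norm Lipschitz bound for the explicit $\mathcal F$, which G\^ateaux differentiability alone does not supply. It is simpler to drop it.
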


\begin{remark}
For the particular choice of the observable given by the empirical average as defined in \eqref{empirical}, the conclusion of Theorem \ref{Maintheorem1} reads
\[
\lim_{N\to\infty}
\chi_{\psi,N,t}(\mathbf{x}^{(N)}(\omega),\mathbf{v}^{(N)})
= \frac{1}{t}\log \Bigg(|\bar{v}|\int {\varphi(s)}
D \mathcal{F}_{\mu_{t-1}} \circ \cdots \circ D \mathcal{F}_{\mu_0} (\rho_0'(s))ds \Big| \Bigg).
\]
\end{remark}
Theorem~\ref{Maintheorem1} shows that, along directions $\mathbf v$ whose components have a convergent Cesàro mean, the mean-field observable Lyapunov exponents are determined by the linear cocycle generated by the derivatives \(D_{\mu_t}\mathcal F\) over the orbit \(\mu_t=\mathcal F^t\mu_0\) of the self-consistent transfer operator, acting on the derivative \(\rho_0'\) of the initial density. For \(\bar v\neq0\), the dependence on the perturbation direction enters only through the term \(t^{-1}\log|\bar v|\), which vanishes as \(t\to\infty\). Thus, after first taking \(N\to\infty\) and then \(t\to\infty\), each admissible sequence of mean-field observables has a single Lyapunov exponent, independent of $\mathbf v$. We expect this collapse to reflect the permutation symmetry of both the interactions and the observables; for non-symmetric networks, a richer spectrum of mean-field exponents should emerge, a question we leave for future work.

We now apply the preceding theorem to weakly coupled uniformly expanding maps and prove that their mean-field observable Lyapunov exponent is negative. This reflects the existence of an attracting stationary state in the thermodynamic limit $N\to\infty$. We first introduce
the functional-analytic framework in which the result is formulated. We
consider the nested normed spaces
\[
(C^2,\|\cdot\|_{C^2})
\subseteq
(C^1,\|\cdot\|_{C^1})
\subseteq
(C^0,\|\cdot\|_{C^0}).
\]
Here, $C^k$ denotes the class of measures that have densities in
$C^k(\mathbb T,\mathbb R)$\footnote{Throughout, $C^k(\mathbb T,\mathbb R)$ denotes the space of
functions from $\mathbb T$ to $\mathbb R$ having continuous derivatives up
to order $k$. We use $C^k$ to denote the class of measures on $\mathbb T$
that admit a density belonging to $C^k(\mathbb T,\mathbb R)$.}.
For a measure $\mu$ with density $g\in C^k$, we define
\[
\|\mu\|_{C^k}:=\|g\|_{C^k},
\qquad
\|g\|_{C^k}
=
\sum_{j=0}^{k}\sup_{x\in\mathbb T}|g^{(j)}(x)|.
\]
Moreover, we denote by $P(\mathbb T)$ the space of all probability
measures on $(\mathbb T,\mathcal B(\mathbb T))$. We then define
\[
P_k:=P(\mathbb T)\cap C^k,
\qquad k\in\{0,1\},
\]
and the zero-average spaces
\[
V_0:=\{\mu\in C^0:\mu(\mathbb T)=0\},
\qquad
V_1:=V_0\cap C^1.
\]
Let $f_\mu$ be defined as in \eqref{fmu}.

\begin{assumption}[Uniform Expansion and Regularity]
\label{ass:small-coupling}
Assume that $f\in C^4(\mathbb T,\mathbb T)$ is uniformly expanding, i.e.,
there exists $\lambda>1$ such that
\[
\inf_{x\in\mathbb T}|f'(x)|\geq\lambda,
\]
and that $h\in C^4(\mathbb T^2,\mathbb R)$.
\end{assumption}

Since
\[
f_\mu'(x)
=
f'(x)+\int_{\mathbb T}\partial_1h(x,y)\,d\mu(y),
\]
we have
\[
|f_\mu'(x)|
\geq
|f'(x)|-\|\partial_1h\|_\infty.
\]
Hence, by choosing $\|h\|_1$ sufficiently small, the family
$\{f_\mu\}_{\mu\in P_0}$ remains uniformly expanding, that is, there
exists $\lambda_0>1$ such that
\[
\inf_{\mu\in P_0}\inf_{x\in\mathbb T}
|f_\mu'(x)|
\geq\lambda_0.
\]
\begin{definition}[Local Basin of Attraction]
We say that $\mu^*$ has a \emph{local basin of attraction} if there exist
$\varepsilon>0$ and constants $C,\gamma>0$ such that
\[
U_\varepsilon(\mu^*)
:=
\left\{
\mu\in C^1:
\|\mu-\mu^*\|_{C^1}\leq\varepsilon
\right\}
\]
satisfies
\[
\|F^t(\mu)-\mu^*\|_{C^1}
\leq
Ce^{-\gamma t}\|\mu-\mu^*\|_{C^1},
\qquad
\mu\in U_\varepsilon(\mu^*),\quad t\in\mathbb N.
\]
In particular, every initial distribution in $U_\varepsilon(\mu^*)$
converges exponentially to $\mu^*$ under iteration of $F$.
\end{definition}
Under Assumption~\ref{ass:small-coupling}, and for sufficiently small $\delta$, there exists an attracting
fixed point $\mu^*\in C^3$ with a local basin of attraction, as established
in Lemma~\ref{unique attracting}.

\begin{theorem}
\label{Maintheorem2}
Suppose that assumptions \ref{ass:observable-symmetry}-\ref{ass:small-coupling} hold. There is $\delta>0$ sufficiently small such that if $\|h\|_{C^3}<\delta$, then there exists
$\mu^*\in C^3$ an attracting fixed point of the associated
self-consistent transfer operator. If the initial distribution
$\mu_0\in C^2$ lies in local basin of attraction of $\mu^*$, then
the upper observable Lyapunov exponent in the mean-field limit is negative;
more precisely,
\[
\limsup_{t\to\infty}
\left(
\lim_{N\to\infty}
\chi_{\psi,N,t}
\bigl(\mathbf{x}^{(N)}(\omega),\mathbf{v}^{(N)}\bigr)
\right)
<0,
\qquad\text{almost surely}.
\]
\end{theorem}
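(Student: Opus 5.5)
We first invoke Lemma~\ref{unique attracting}, which for $\|h\|_{C^3}<\delta$ provides an attracting fixed point $\mu^*\in C^3$ of $\mathcal F$ with a local basin of attraction. We then feed Theorem~\ref{Maintheorem1} into the exponential contraction near $\mu^*$. The plan has three steps.

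\textbf{Step 1: applicability of Theorem~\ref{Maintheorem1}.} We check that $\mathcal F$ is Gâteaux differentiable at every $\mu_l=\mathcal F^l(\mu_0)$. Since $\mu_0\in C^2$ and $f,h\in C^4$, each $\mu_l$ stays in $C^2$ and in the basin, so $\mu_l\to\mu^*$ in $C^1$. We then compute the derivative explicitly. Writing $\mathcal L_\mu$ for the transfer operator of $f_\mu$, we have
\[
D_\mu\mathcal F(\rho)=\mathcal L_\mu\rho+\frac{d}{d\varepsilon}\Big|_{\varepsilon=0}\mathcal L_{f_\mu+\varepsilon\int h(\cdot,y)\rho(y)dy}\,\mu .
\]
This derivative exists in $C^0$, so weak convergence holds, for directions $\rho\in C^1$ and base points $\mu\in C^2$. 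By Theorem~\ref{Maintheorem1}, $\lim_N\chi_{\psi,N,t}$ equals
\[
\frac1t\log\Big(|\bar v|\,\Big|\int u_t\,D_{\mu_{t-1}}\mathcal F\cdots D_{\mu_0}\mathcal F(\rho_0')\,ds\Big|\Big),
\]
where $u_t$ is the limiting fluctuation function of Assumption~\ref{ass:observable-limit} for the measure $\mu_t$. Assumption~\ref{ass:observable-scaling} gives $\|u_t\|_\infty\le 2C$ uniformly in $t$. If $\bar v=0$ or the integral vanishes, the quantity is $-\infty$ and the claim is trivial.

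\textbf{Step 2: contraction of the derivative cocycle on $V_1$.} We aim to show that
\[
\|D_{\mu_{t-1}}\mathcal F\cdots D_{\mu_0}\mathcal F\,\nu\|_{C^0}\le K e^{-\gamma' t}\|\nu\|_{C^1}
\qquad\text{for all }\nu\in V_1,
\]
and then apply it to $\nu=\rho_0'$, which has zero mean. We write $D_\mu\mathcal F=\mathcal L_\mu+Q_\mu$, where $Q_\mu$ is the coupling term. For small $\|h\|_{C^3}$, the operators $\mathcal L_\mu$ are transfer operators of uniformly expanding maps with uniform distortion. They therefore satisfy a uniform Lasota--Yorke inequality in $C^1/C^0$, and the sequential compositions contract exponentially on zero-mean densities: $\|\mathcal L_{\mu_{t-1}}\cdots\mathcal L_{\mu_0}\nu\|_{C^1}\le K\theta^t\|\nu\|_{C^1}$. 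For this we use a cone or loss-of-memory argument for sequential expanding maps, with constants uniform over the $C^1$-neighbourhood. The perturbation $Q_\mu$ has norm $O(\|h\|_{C^2})$ from $C^0$ (acting through $\int h\,\rho$) to $C^0$ or $C^1$, using that $\mu\in C^2$. A Neumann or perturbative argument then keeps the cocycle contracting with rate $\theta+O(\delta)<1$. This step is the main obstacle. The difficulty is closing the estimate with uniform constants along the nonautonomous orbit $\mu_l$ while losing a derivative in $Q_\mu$. I would handle it by proving the Lasota--Yorke inequality for the full $D_\mu\mathcal F$ on the pair $C^1\subset C^0$, using the uniform $C^2$ bound on $\mu_l$ that follows from the basin.

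An alternative for Step 2 is to argue that the derivative cocycle equals the limit of difference quotients of $\mathcal F^t$. The local-basin estimate $\|\mathcal F^t\mu-\mathcal F^t\mu'\|\le Ce^{-\gamma t}\|\mu-\mu'\|$, if the basin lemma provides it for pairs, then bounds the linearisation directly.

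\textbf{Step 3: conclusion.} Steps 1 and 2 give
\[
\Big|\int u_t\,(\cdots)\Big|\le 2C K e^{-\gamma' t}\|\rho_0'\|_{C^1}.
\]
Hence $\lim_N\chi_{\psi,N,t}\le \frac1t\log\big(|\bar v|\,2CK\|\rho_0\|_{C^2}\big)-\gamma'$. Taking $\limsup_{t\to\infty}$ yields a bound of $-\gamma'<0$, almost surely, since the almost-sure set in Theorem~\ref{Maintheorem1} is a countable intersection over $t$.
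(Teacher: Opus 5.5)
Your proposal is correct and follows the paper's outline. You reduce, via Theorem~\ref{Maintheorem1} and the uniform bound $\sup_t\|u_t\|_\infty<\infty$ (the content of the paper's remark that $\psi_{k,t}-\mathbb E_{\mu_t}\psi_{k,t}=O(1/N)$), to exponential decay of the derivative cocycle applied to $\rho_0'$. You split $D_\mu\mathcal F=(f_\mu)_*+D_\mu\mathbb L$ with $\|D_\mu\mathbb L\|_{V_0\to V_1}\le C\|h\|_{C^2}\|\mu\|_{C^2}$. You then treat the coupling part as a small perturbation of contracting pushforward compositions.

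The genuine difference is where that contraction comes from and how the perturbation is closed. The paper only obtains block contraction, $\|(f_{\mu_{j+\tau-1}}\circ\cdots\circ f_{\mu_j})_*\|_{V_1\to V_1}\le q_0<1$ for $j\ge J$. It gets this from $\mu_j\to\mu^*$ in $C^1$, the spectral gap of $(f_{\mu^*})_*$, and the tail-sequence lemma of \cite{castorrini2025differential}. It then compares block products with an $O(\|h\|_{C^2})$ error, giving $q_0+K_\tau\|h\|_{C^2}<1$ per block.

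You instead use uniform memory loss for arbitrary sequences of uniformly expanding circle maps, valid from every starting time. A Hilbert-metric cone argument gives $C^0$ decay of zero-mean densities, and the sequential Lasota--Yorke inequality upgrades this to $C^1$. You then expand the full product binomially to get the rate $\theta+K\sup_l\|D_{\mu_l}\mathbb L\|_{V_1\to V_1}=\theta+O(\delta)$.

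Your route is more self-contained and never actually uses the basin hypothesis: for small coupling it gives negativity for every $\mu_0\in C^2$. The paper's route needs the orbit to approach $\mu^*$. In exchange, it only requires convergence to equilibrium at the fixed point, which is the structure available in the framework of \cite{castorrini2025differential} when uniform sequential memory loss is not.

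Three small corrections:
\begin{itemize}
\item The bound $\sup_l\|\mu_l\|_{C^2}<\infty$ does not follow from the basin, which only controls $\|\mu_l-\mu^*\|_{C^1}$. It comes from a sequential $C^2$ Lasota--Yorke inequality, as in the paper's footnote.
\item Once you have that bound, there is no loss of derivative in the direction, since $D_\mu\mathbb L$ maps $V_0$ boundedly into $V_1$. So the Neumann expansion closes on its own. A Lasota--Yorke inequality for the full $D_\mu\mathcal F$ would not by itself give contraction.
\item Your alternative of differentiating the basin estimate is not available as stated. That estimate compares orbits only with $\mu^*$, not pairs of orbits.
\end{itemize}
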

This theorem makes precise what we had already mentioned in the introduction, i.e. that microscopic chaos does not necessarily lead to macroscopic chaos.

 Section $\ref{Lyapunov under observable}$ below is independent of the rest of the paper and is devoted to examples 
in the finite-dimensional settings showing the relationship between Lyapunov spectrum and observed Lyapunov spectrum for certain observables and dynamical systems. This section may 
be skipped by readers interested only in the main results and their proofs, 
which are presented in Section \ref{Proof of the Main Theorems}.

\subsection{Lyapunov Spectrum vs Observable Lyapunov Spectrum}{\label{Lyapunov under observable}}
Recall the definitions of the Lyapunov exponent \eqref{Eq:StLyap}  and observable Lyapunov exponent \eqref{Lyaponuvunderobservable}. A natural question arises:
Under what conditions do we have 
\[
\lambda(\mathbf{x},\mathbf{v}) = \lambda_\psi(\mathbf{x},\mathbf{v}) \quad \text{for all } \mathbf{v} \in \mathbb{R}^d?
\]
This question was also considered in \cite{ott2003learning} in the setting of \emph{deterministic observables}, namely observables whose values evolve autonomously. Such observables are typically sufficiently high-dimensional to retain enough information about the underlying system for the observed dynamics to be governed by a closed deterministic rule. We instead are mostly interested in the case of observations with dimension much lower than that of the phase space -- as for mean-fields in GCMs. 

We begin with an example illustrating that the two can in fact be different, and we then show that under some mild conditions on the observable, the Lyapunov spectrum remains unchanged for the specific case of factor maps, i.e. whose Jacobian is diagonal. The systematic study of the relationship between Lyapunov exponents and observed Lyapunov exponents is beyond the scope of this paper and  will be the object  of future investigation by the authors.

\begin{example}
Consider the linear toral map
\[
F:\mathbb{T}^2\to\mathbb{T}^2,
\qquad
F(x_1,x_2)=(2x_1+x_2,\;3x_2)\pmod{1},
\]
where \(\mathbb{T}^2=\mathbb{R}^2/\mathbb{Z}^2\). The Jacobian is constant,
\[
DF=
\begin{pmatrix}
2&1\\
0&3
\end{pmatrix}.
\]
Hence the Lyapunov exponents of \(F\) are
$
\lambda_1=\log 3,$ and $\lambda_2=\log 2,$
with corresponding Oseledets subspaces
$
E_1=\operatorname{span}(1,1)^T,$ and
$E_2=\operatorname{span}(1,0)^T$.
Now consider the smooth observable
\[
\psi:\mathbb{T}^2\to\mathbb{R},
\qquad
\psi(x_1,x_2)=\cos\bigl(2\pi(x_2-x_1)\bigr).
\]
Its differential is
\[
D_{\mathbf{x}}\psi
=
2\pi\sin\bigl(2\pi(x_2-x_1)\bigr)
\left(
1,\,-1
\right).
\] 
For \(v\in E_1\), since \(E_1\) is invariant under \(DF\),
$D_{\mathbf{x}}F^tv=3^t v\in E_1.$
Consequently,
$
D_{F^t(\mathbf{x})}\psi\,D_{\mathbf{x}}F^tv=0
$
for every \(t\geq0\), and therefore
$
\lambda_\psi(\mathbf{x},\mathbf{v})=-\infty .
$
Thus, although the largest Lyapunov exponent of the toral map is
$
\lambda_1=\log3,
$
the corresponding observable Lyapunov exponent is
$
\lambda_\psi(x,v)=-\infty .
$
This shows that Lyapunov exponents obtained through an observable need not coincide with the usual Lyapunov exponents of the dynamical system, since the observable may annihilate an expanding Oseledets direction.
\end{example}

\noindent
We now investigate some sufficient conditions under which the Lyapunov spectrum of a map coincides with the Lyapunov spectrum under an observable. Throughout this section, we work under the following assumptions on the map \( F \) and the observable \( \psi \) that, roughly speaking, ensure that the kernel of the observable does not get close too frequently to the Oseledets directions.

\begin{assumption}\label{sameLyapunov}
\leavevmode
\begin{enumerate}
\item $F:\mathbb{T}^2\to\mathbb{T}^2$ is a $C^1(\mathbb{T}^2, \mathbb{T}^2)$ map admitting an invariant and ergodic probability measure \(\mu\) which is absolutely continuous with respect to Lebesgue measure, with bounded density.  
\item \(\psi:\mathbb{T}^2\to\mathbb{R}\) is a \(C^1(\mathbb{T}^2,\mathbb{R})\) function such that, for \(i=1,2\), there exist constants \(C_i>0\) and \(\alpha_i>0\) satisfying
\[
m\left(
\left\{\mathbf{x}\in\mathbb{T}^2:
|\partial_{x_i}\psi(\mathbf{x})|\le r
\right\}
\right)
\le C_i r^{\alpha_i},
\]
for all \(r>0\), where \(m\) denotes the Lebesgue measure on \(\mathbb{T}^2\).
\end{enumerate}
\end{assumption}

\begin{remark}
The sublevel-set condition in Assumption \ref{sameLyapunov} is not an artificial restriction. It is satisfied by a broad
class of observables. In particular, in  Appendix \ref{Appendix}. we show that this
assumption holds for several natural classes, including real analytic
observables with non-identically vanishing partial derivatives.
\end{remark}

\begin{proposition}[Lyapunov Spectrum Invariance]{\label{same spectrum}}

Suppose that \(F:\mathbb{T}^2\to\mathbb{T}^2\) and 
\(\psi:\mathbb{T}^2\to\mathbb{R}\) satisfy Assumptions $\ref{sameLyapunov}$.
Assume additionally that the Jacobian of \(F\) is diagonal for every 
\(\mathbf{x}=(x_1,x_2)\in\mathbb{T}^2\), i.e.,
\[
D_{\mathbf{x}}F=
\begin{pmatrix}
\partial_{x_1}F_1(\mathbf{x})&0\\
0&\partial_{x_2}F_2(\mathbf{x})
\end{pmatrix}.
\]
If the Lyapunov spectrum of \(F\) is simple, then for 
\(\mu\)-almost every \(\mathbf{x}\in\mathbb{T}^2\) and every non-zero tangent vector
\(\mathbf{v}\in\mathbb{R}^2\),
\[
\lambda(\mathbf{x},\mathbf{v})
=
\lambda_\psi(\mathbf{x},\mathbf{v}).
\]
\end{proposition}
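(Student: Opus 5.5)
Since the Jacobian is diagonal, $D_{\mathbf x}F^t=\mathrm{diag}(a_1^{(t)}(\mathbf x),a_2^{(t)}(\mathbf x))$ with $a_i^{(t)}(\mathbf x)=\prod_{s=0}^{t-1}\partial_{x_i}F_i(F^s\mathbf x)$. Simplicity of the spectrum together with Birkhoff's ergodic theorem (applied to $\log|\partial_{x_i}F_i|$, assuming integrability) gives, for $\mu$-a.e. $\mathbf x$, $\frac1t\log|a_i^{(t)}(\mathbf x)|\to\lambda_i$ with $\lambda_1\neq\lambda_2$, say $\lambda_1>\lambda_2$ after relabelling. The Oseledets directions are the coordinate axes. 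For $\mathbf v=(v_1,v_2)$ the chain rule gives
\[
D_{\mathbf x}(\psi\circ F^t)(\mathbf v)=\partial_{x_1}\psi(F^t\mathbf x)\,a_1^{(t)}v_1+\partial_{x_2}\psi(F^t\mathbf x)\,a_2^{(t)}v_2 .
\]
The upper bound $\limsup_t\frac1t\log|D_{\mathbf x}(\psi\circ F^t)\mathbf v|\le\lambda(\mathbf x,\mathbf v)$ is immediate from $\|D\psi\|_\infty<\infty$. The work is in the lower bound.

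For the lower bound I split into two cases. If $v_1\neq0$ then $\lambda(\mathbf x,\mathbf v)=\lambda_1$, and I write
\[
|D(\psi\circ F^t)\mathbf v|\ge|a_1^{(t)}|\bigl(|v_1||\partial_{x_1}\psi(F^t\mathbf x)|-\|D\psi\|_\infty|v_2|\,|a_2^{(t)}|/|a_1^{(t)}|\bigr).
\]
The ratio $|a_2^{(t)}/a_1^{(t)}|$ decays like $e^{-(\lambda_1-\lambda_2-\epsilon)t}$. It then suffices that $|\partial_{x_1}\psi(F^t\mathbf x)|\ge e^{-\epsilon t}$ for all large $t$, $\mu$-a.s. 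If $v_1=0$, only the second term survives and I need the same for $\partial_{x_2}\psi$. Along such times the quantity is at least $e^{(\lambda_i-2\epsilon)t}$. Letting $\epsilon\to0$ then gives $\liminf\ge\lambda(\mathbf x,\mathbf v)$, so the limit exists and equals the Lyapunov exponent.

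The key step, and the main obstacle, is this subexponential lower bound on $|\partial_{x_i}\psi|$ along orbits, which I would get from Borel--Cantelli with the sublevel-set condition. Let $A_t=\{\mathbf x:|\partial_{x_i}\psi(F^t\mathbf x)|\le e^{-\epsilon t}\}=F^{-t}\{|\partial_{x_i}\psi|\le e^{-\epsilon t}\}$. By invariance of $\mu$ and boundedness of its density $\rho$,
\[
\mu(A_t)=\mu(\{|\partial_{x_i}\psi|\le e^{-\epsilon t}\})\le\|\rho\|_\infty C_i e^{-\alpha_i\epsilon t},
\]
which is summable in $t$. Hence $\mu$-a.e. $\mathbf x$ lies in only finitely many $A_t$. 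Intersecting over rational $\epsilon$ and $i=1,2$, and with the Birkhoff full-measure sets, yields a single full-measure set on which the argument works for every $\mathbf v\neq0$ simultaneously. The point to check carefully is that the good set does not depend on $\mathbf v$, which holds because the estimates only involve $\mathbf x$ and the two coordinate quantities. One also has to track the relabelling $\lambda_1>\lambda_2$ so that the case $v_1\neq0$ is the dominant one.
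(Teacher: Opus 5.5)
Your proposal is correct and follows essentially the same route as the paper. The sublevel-set estimate, together with the bounded density and $F$-invariance of $\mu$, gives summable measures for $F^{-t}\{|\partial_{x_i}\psi|\le e^{-\epsilon t}\}$; Borel--Cantelli then yields subexponential lower bounds on $|\partial_{x_i}\psi(F^t\mathbf{x})|$ on a full-measure set independent of $\mathbf v$, and the diagonal structure lets the dominant term $a_1^{(t)}v_1$ control the sum. The only differences are cosmetic: you bound $|D_{\mathbf x}(\psi\circ F^t)(\mathbf v)|$ above by $\|D\psi\|_\infty\|D_{\mathbf x}F^t\mathbf v\|$ and below by the reverse triangle inequality, whereas the paper factors out $\partial_{x_1}F_1^t v_1\,\partial_{x_1}\psi(F^t\mathbf{x})$ and shows the remaining factor tends to $1$. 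Your version needs only the one-sided bound $\liminf_t \frac1t\log|\partial_{x_i}\psi(F^t\mathbf x)|\ge 0$ and makes the existence of the limit explicit.
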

We prove Proposition \ref{same spectrum} through a sequence of lemmas, and at the end, we combine these results to complete the proof of Proposition \ref{same spectrum}. Before the proof, we give a geometrical interpretation.
\paragraph{Geometric interpretation.}
In the diagonal case, the $i$-th Oseledets subspace is the coordinate
direction
$
E_i=\operatorname{span}\{e_i\}, i=1,2,
$
where $e_1=(1,0)$ and $e_2=(0,1)$ are the standard basis vectors of
$\mathbb{R}^2$, and
\[
\partial_{x_i}\psi(F^t(x))
=
\nabla\psi(F^t(x))\cdot e_i
\]
measures how strongly the observable detects this direction. If this quantity is zero, then $e_i$ lies in the kernel of the gradient, so the observable loses this direction. More generally, if it is very small, the observable is nearly insensitive to this direction. Thus, we want the Oseledets direction neither to lie in the kernel nor to approach the kernel exponentially fast along a typical orbit. This is expressed by
\[
\lim_{t\to\infty}
\frac{1}{t}\log
\left|\partial_{x_i}\psi(F^t(x))\right|
=0.
\]
The sublevel-set condition
\[
m\left(\left\{x:
|\partial_{x_i}\psi(x)|\le r
\right\}\right)
\le C_i r^{\alpha_i}
\]
provides a quantitative way to ensure this: it allows $\partial_{x_i}\psi$ to become arbitrarily small, but requires the measure of the set of points where it is very small to decrease sufficiently fast as $r \to 0$. A weaker condition is
\[
\log|\partial_{x_i}\psi|\in L^1(\mu),
\]
which applies to a larger class of observables and still gives subexponential decay along typical orbits. The sublevel-set condition is stronger but provides additional quantitative control.
\begin{lemma}\label{expdecay}
Let \(c<0\) and define
\[
\Omega_{t,c}^i:=
\left\{
\mathbf{x}\in\mathbb{T}^2:
|\partial_{x_i}\psi(\mathbf{x})|\le e^{ct}
\right\},
\]
for $i=1,2$. Then, under Assumption \ref{sameLyapunov}, for every probability measure \(\nu\) with bounded density with respect to \(m\), there exist constants \(C'>0\) and \(\gamma>0\) such that
$
\nu(\Omega_{t,c}^i)\le C'e^{-\gamma t}.
$
\end{lemma}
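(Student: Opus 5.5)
The estimate follows directly from the bounded density of $\nu$ and the sublevel-set condition in Assumption~\ref{sameLyapunov}. The set $\Omega_{t,c}^i$ is exactly the sublevel set $\{|\partial_{x_i}\psi|\le r\}$ with $r=e^{ct}$. Since $c<0$, this radius decays exponentially in $t$, so the Lebesgue measure of $\Omega^i_{t,c}$ does too. We then transfer the bound from $m$ to $\nu$ using the density bound.

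Concretely, let $\rho_\nu=d\nu/dm$ with $\|\rho_\nu\|_\infty\le K<\infty$. The set $\Omega_{t,c}^i$ is Borel, indeed closed, since $\partial_{x_i}\psi$ is continuous because $\psi\in C^1$. We then estimate
\[
\nu(\Omega_{t,c}^i)=\int_{\Omega_{t,c}^i}\rho_\nu\,dm\le K\, m(\Omega_{t,c}^i)\le K C_i\bigl(e^{ct}\bigr)^{\alpha_i}=KC_i\,e^{c\alpha_i t}.
\]
Setting $C':=KC_i$ and $\gamma:=-c\alpha_i=|c|\alpha_i>0$ gives $\nu(\Omega_{t,c}^i)\le C'e^{-\gamma t}$ for all $t\ge 0$.

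If one wants constants independent of $i$, take $C'=K\max_i C_i$ and $\gamma=|c|\min_i\alpha_i$. This still works for all $t\ge0$, since $e^{-|c|\alpha_i t}\le e^{-|c|\min_j\alpha_j t}$.

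There is no genuine obstacle here; the argument is a one-line application of Assumption~\ref{sameLyapunov}. The only point to check is that the assumption is stated for all $r>0$, so that $r=e^{ct}\in(0,1]$ is admissible for every $t\ge 0$. Note also that $\gamma$ depends on $c$ and $\alpha_i$, and $C'$ on $K$ and $C_i$, but neither depends on $t$. This uniformity in $t$ is what will make the bound summable over $t\in\mathbb N$ and allow a Borel--Cantelli argument in the subsequent lemmas.
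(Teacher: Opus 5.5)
Your proposal is correct and follows the paper's proof essentially line by line. You bound $\nu(\Omega^i_{t,c})$ by the density bound times $m(\Omega^i_{t,c})$, apply the sublevel estimate with $r=e^{ct}$, and set $\gamma=-\alpha_i c$. Your extra remarks on the measurability of $\Omega^i_{t,c}$ and on choosing constants uniform in $i$ are correct, though the paper does not need them.
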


\begin{proof}
Since \(\nu\) has bounded density with respect to the Lebesgue measure,
there exists \(M>0\) such that
\[
d\nu=\rho(\mathbf{x})dm(\mathbf{x}),
\qquad
\rho(\mathbf{x})\le M  \qquad m\mbox{-a.e.}
\]
Therefore,
\[
\nu(\Omega_{t,c}^i)
=
\int_{\Omega_{t,c}}\rho(\mathbf{x})\,dm(\mathbf{x})
\le
M m(\Omega_{t,c}^i).
\]
By the definition of \(\Omega_{t,c}^i\) and the sublevel estimate with
\(r=e^{ct}\), we have
$
m(\Omega_{t,c}^i)
\le
C_i(e^{ct})^{\alpha_i}
=
C_ie^{\alpha_i ct}.
$
Hence,
\[
\nu(\Omega_{t,c}^i)
\le
MC_ie^{\alpha_i ct}.
\]
Since \(c<0\), setting
$
\gamma=-\alpha_i c>0
$
gives
$
\nu(\Omega_{t,c}^i)
\le
MC_ie^{-\gamma t}.
$
Thus, \(\nu(\Omega_{t,c}^i)\) decays exponentially as \(t\to\infty\).
\end{proof}

\begin{lemma}\label{lem:Yk-zero}
Fix $c < 0$ and define
\[
Y_c^i := \Bigl\{ \mathbf{x} \in \mathbb{T}^2 : \exists t_k \nearrow \infty \text{ such that }
\frac{1}{t_k} \log \bigl| \partial_{x_i}\psi(F^{t_k}(\mathbf{x})) \bigr| \le c \Bigr\}.
\]
Under Assumption \ref{sameLyapunov}, 
\(
\mu(Y_c^i) = 0.
\)
\end{lemma}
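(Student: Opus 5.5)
The plan is a Borel--Cantelli argument along integer times, using invariance of $\mu$ and the exponential bound of Lemma~\ref{expdecay}. First observe that
\[
\frac{1}{t}\log\bigl|\partial_{x_i}\psi(F^{t}(\mathbf{x}))\bigr|\le c
\quad\Longleftrightarrow\quad
F^{t}(\mathbf{x})\in\Omega^i_{t,c}.
\]
Hence $\mathbf{x}\in Y^i_c$ exactly when $F^t(\mathbf{x})\in\Omega^i_{t,c}$ for infinitely many $t\in\mathbb N$. Here we use that $t_k\nearrow\infty$ ranges over integer times, so a sequence tending to infinity hits infinitely many distinct integers. Therefore
\[
Y^i_c=\limsup_{t\to\infty} F^{-t}\bigl(\Omega^i_{t,c}\bigr)
=\bigcap_{T\ge1}\bigcup_{t\ge T}F^{-t}\bigl(\Omega^i_{t,c}\bigr).
\]
Each set in this expression is Borel, since $\psi$ is $C^1$ and $F$ is continuous.

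Next, by $F$-invariance of $\mu$ (Assumption~\ref{sameLyapunov}(1)),
\[
\mu\bigl(F^{-t}(\Omega^i_{t,c})\bigr)=\mu(\Omega^i_{t,c}).
\]
Since $\mu$ has bounded density with respect to $m$, Lemma~\ref{expdecay} applied with $\nu=\mu$ gives constants $C'>0$ and $\gamma>0$ with
\[
\mu(\Omega^i_{t,c})\le C'e^{-\gamma t}.
\]
Consequently
\[
\sum_{t\ge1}\mu\bigl(F^{-t}(\Omega^i_{t,c})\bigr)\le C'\sum_{t\ge1}e^{-\gamma t}<\infty .
\]
The first Borel--Cantelli lemma then yields $\mu(Y^i_c)=0$. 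Equivalently, $\mu(Y^i_c)\le\sum_{t\ge T}C'e^{-\gamma t}\to0$ as $T\to\infty$.

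There is no serious obstacle, but two details need care. The first is the identification of $Y^i_c$ with the limsup set. We should note that $\partial_{x_i}\psi(F^t(\mathbf{x}))=0$ gives logarithm $-\infty$, which still satisfies the inequality; such points lie in $\Omega^i_{t,c}$, so they are accounted for. The second is that Lemma~\ref{expdecay} is applied to $\mu$ itself rather than to some pushforward. This is legitimate precisely because invariance lets us pull the measure of the preimage back to $\mu(\Omega^i_{t,c})$, and $\mu$ has bounded density by hypothesis.

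Ergodicity is not needed for this lemma; it will enter later, when Lemma~\ref{lem:Yk-zero} is combined over a sequence $c=-1/n$. That combination gives $\liminf_t\frac1t\log|\partial_{x_i}\psi(F^t\mathbf{x})|\ge0$ almost everywhere. The matching upper bound $\le 0$ is trivial, since $\psi\in C^1$ on a compact set has bounded derivatives.
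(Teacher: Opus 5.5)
Your proposal is correct and follows essentially the same route as the paper. You identify $Y_c^i$ with $\limsup_t F^{-t}(\Omega^i_{t,c})$, use $F$-invariance of $\mu$ together with Lemma~\ref{expdecay} (applied to $\mu$ itself, which has bounded density) to obtain a summable series, and conclude with the first Borel--Cantelli lemma. Your additional remarks on integer times, the case $\partial_{x_i}\psi(F^t(\mathbf{x}))=0$, and measurability make explicit details that the paper leaves implicit.
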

\begin{proof}
Let $\Gamma_t^i$ be defined by
\[
\Gamma_t^i := F^{-t}(\Omega_{t,c}^i)=\{ \mathbf{x} \in\mathbb{T}^2:|\partial_{x_i} \psi(F^{t}(\mathbf{x}))| \le e^{c t} \}.
\]
Using $F$-invariance of $\mu$, we have $\mu(\Gamma_t^i)=\mu(\Omega_{t,c}^i)$. By Lemma \ref{expdecay}, the measure of $\Gamma_t^i$ decays exponentially; the series
$\sum_{t=1}^\infty \mu(\Gamma_t^i)$ converges.
By the first Borel--Cantelli lemma,
\[\mu (\limsup_{t\to\infty}\Gamma_t^i) =0.\]
Moreover,
$
Y_c^i=\limsup_{t\to\infty}\Gamma_t^i,
$
because membership in infinitely many $\Gamma_t^i$ is equivalent to the
existence of a subsequence $(t_k)$ satisfying
$
\frac{1}{t_k}\log|\partial_{x_i}\psi(F^{t_k}(\mathbf{x}))|\le c .
$
Therefore,
$
\mu(Y_c^i)=0.
$
\end{proof}

\begin{lemma}{\label{limitlog0}}
Under the standing assumptions of Proposition \ref{same spectrum}, we have
\[
\lim_{t \to \infty} \frac{1}{t} \log \bigl| \partial_{x_i} \psi(F^t(\mathbf{x}))\bigr| = 0
\quad \text{for $\mu$-almost every } \mathbf{x} \in \mathbb{T}^2 \text{and } i=1,2.
\]
\end{lemma}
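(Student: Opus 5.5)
We need to show both an upper bound ($\limsup \le 0$) and a lower bound ($\liminf \ge 0$) for $\frac1t\log|\partial_{x_i}\psi(F^t(\mathbf{x}))|$, for $\mu$-almost every $\mathbf{x}$ and each $i=1,2$.

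\emph{Upper bound.} This part is immediate. Since $\psi\in C^1$ on the compact torus, we have $|\partial_{x_i}\psi|\le \|\nabla\psi\|_\infty<\infty$. Hence
\[
\frac1t\log|\partial_{x_i}\psi(F^t(\mathbf{x}))|\le \frac1t\log\|\nabla\psi\|_\infty\to 0
\]
for every $\mathbf{x}$. If $\partial_{x_i}\psi(F^t\mathbf{x})=0$, the logarithm equals $-\infty$, and that case is handled by the lower bound below.

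\emph{Lower bound.} Here we use Lemma~\ref{lem:Yk-zero}. Take the countable sequence $c_n=-1/n$ and set
\[
Y^i:=\bigcup_{n\ge1}Y^i_{c_n}.
\]
By Lemma~\ref{lem:Yk-zero} each $Y^i_{c_n}$ is $\mu$-null, so $Y^i$ is $\mu$-null, and therefore so is $Y^1\cup Y^2$.

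Now fix $\mathbf{x}\notin Y^1\cup Y^2$ and suppose $\liminf_{t}\frac1t\log|\partial_{x_i}\psi(F^t\mathbf{x})|<0$. Then there is some $n$ with the liminf strictly less than $-1/n$. This gives a sequence $t_k\nearrow\infty$ along which
\[
\frac1{t_k}\log|\partial_{x_i}\psi(F^{t_k}\mathbf{x})|\le -1/n.
\]
Such a sequence is also produced when the value is $-\infty$ at infinitely many times. In either case $\mathbf{x}\in Y^i_{c_n}$, which is a contradiction.

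There is one subtlety: $\partial_{x_i}\psi(F^t\mathbf{x})$ could vanish at only finitely many $t$. That does not affect the limit, since it involves only large $t$. For every $c<0$, the point lies in $\Gamma_t^i$ for only finitely many $t$. Therefore, for large $t$, $\partial_{x_i}\psi(F^t\mathbf{x})\ne0$ and $\frac1t\log|\cdot|>c$.

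Combining the two bounds, $\liminf\ge 0\ge\limsup$ holds on a set of full $\mu$-measure, so the limit exists and equals $0$.

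The only step needing care is the passage from the fixed-$c$ statement of Lemma~\ref{lem:Yk-zero} to all $c<0$ simultaneously, which is why we restrict to the countable family $c_n=-1/n$. Checking the equivalence between a negative $\liminf$ and membership in some $Y^i_{c_n}$ is routine. The hypotheses of Proposition~\ref{same spectrum} beyond Assumption~\ref{sameLyapunov} are not needed here.
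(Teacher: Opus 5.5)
Your proposal is correct and follows essentially the same route as the paper: the upper bound $\limsup\le 0$ comes from boundedness of $\partial_{x_i}\psi$, and the lower bound $\liminf\ge 0$ comes from Lemma~\ref{lem:Yk-zero} applied along the countable family $c=-1/m$. You phrase this as removing a countable union of null sets; the paper phrases it as intersecting countably many full-measure sets, which is the same argument.
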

\begin{proof}
From Lemma~\ref{lem:Yk-zero}, for every $c<0$, there exists a set $E_c^i \subset \mathbb{T}^2$ with $\mu(E_c^i)=1$ such that for every $\mathbf{x}\in E_c^i$,
\[
\liminf_{t\to\infty}\frac{1}{t}\log \left|\partial_{x_i}\psi(F^t(\mathbf{x}))\right|\ge c.
\]
Now set $c=-\frac{1}{m}$, $m\in\mathbb{N}$, and define
\[
E^i := \bigcap_{m=1}^\infty E^i_{-1/m}.
\]
Then $\mu(E^i)=1$, and for every $\mathbf{x}\in E^i$,
\[
\liminf_{t\to\infty}\frac{1}{t}\log \left|\partial_{x_i}\psi(F^t(\mathbf{x}))\right|\ge -\frac{1}{m}
\quad \forall m\in\mathbb{N}.
\]
Hence,
\[
\liminf_{t\to\infty}\frac{1}{t}\log \left|\partial_{x_i}\psi(F^t(\mathbf{x}))\right|\ge 0
\quad \text{for almost every } \mathbf{x}\in \mathbb{T}^2.
\]
Since $\partial_{x_i} \psi$ is bounded, it follows that 
\[
\limsup_{t \to \infty} \frac{1}{t} \log |\partial_{x_i} \psi(F^{t} (\mathbf{x}))|  \le 0,
\] 
Combining the lower and upper bounds yields
\[
\lim_{t \to \infty} \frac{1}{t} \log|\partial_{x_i} \psi(F^{t} (\mathbf{x}))|  = 0.
\] 
\end{proof}
\noindent By taking $E=E^{1}\cap E^{2}$, we obtain a full-measure set on which the conclusion holds for both i=1,2.

\begin{proof}[\textbf{Proof of Proposition \ref{same spectrum}}] Let $\mathbf{v} = (v_1, v_2)$ with $v_1 \neq 0$. By definition, the Lyapunov exponent,
\[
\lambda(\mathbf{x},\mathbf{v}) = \lim_{t \to \infty} \frac{1}{t} \log \| D_{\mathbf{x}}F^t (\mathbf{v}) \|.
\]
If $F$ has a diagonal Jacobian, then
\[
D_{\mathbf{x}}F^t(\mathbf{v}) =
\begin{pmatrix}
\partial_{x_1} F_{1}^{t} v_1 \\[1mm]
\partial_{x_2} F_{2}^{t} v_2
\end{pmatrix},
~~\text{where}~~ \partial_{x_1}F_1^t
=
\frac{\partial F_1^t}{\partial x_1}(\mathbf{x}),
~~~\partial_{x_2}F_2^t
=
\frac{\partial F_2^t}{\partial{x_2}}(\mathbf{x}). \]
and
\[
\| D_{\mathbf{x}}F^t(\mathbf{v})\| = \sqrt{|\partial_{x_1}F_1^t v_1|^2 + |\partial_{x_2}F_2^t v_2|^2}.
\]
With out loss of generality we take $\lambda_1 > \lambda_2$ and $v_1 \neq 0$, the first term dominates and we obtain
\[
\lambda(\mathbf{x},\mathbf{v}) = \lim_{t \to \infty} \frac{1}{t} \log |\partial_{x_1}F_1^t(\mathbf{x}) v_1| = \lambda_1.
\]
Consider the observable
$\psi$
and compute the derivative along $\mathbf{v}$:
\[
D_{\mathbf{x}}(\psi \circ F^t) (\mathbf{v}) = \partial_{x_1} \psi(F^t(\mathbf{x}))\partial_{x_1}F_1^t(\mathbf{x})v_1+ \partial_{x_2}\psi(F^t(\mathbf{x}))  \partial_{x_2}F_2^t(\mathbf{x}) v_2.
\]
The observable Lyapunov exponent along $\mathbf{v}$ is
\[
\lambda_\psi(\mathbf{x},\mathbf{v}) = \lim_{t \to \infty} \frac{1}{t} \log \left|\partial_{x_1} \psi(F^t)\partial_{x_1}F_1^tv_1+ \partial_{x_2}\psi(F^t)  \partial_{x_2}F_2^t v_2\right|.
\]
Factor $\partial_{x_1}F_1^t(\mathbf x)v_1$ (this can be done for a sufficiently large \(t\) since $\lambda_1>-\infty$):
\[
\lambda_\psi(\mathbf{x},\mathbf{v}) = \lim_{t \to \infty} \frac{1}{t} \log \Big| \partial_{x_1}F_1^t v_1 \Big( \partial_{x_1} \psi(F^t) + \partial_{x_2} \psi(F^t) \frac{\partial_{x_2}F_2^tv_2}{\partial_{x_1} F_1^t v_1} \Big) \Big|.
\]
\[
\lambda_\psi(\mathbf{x},\mathbf{v}) = \lim_{t \to \infty} \frac{1}{t} \log |\partial_{x_1}F_1^t v_1| + 
\lim_{t \to \infty} \frac{1}{t} \log \left|  \partial_{x_1} \psi(F^t) +  \partial_{x_2}\psi(F^t) \frac{\partial_{x_2}F_2^tv_2}{\partial_{x_1} F_1^t v_1} \right|.
\]
The first term clearly gives
\[ \lim_{t \to \infty} \frac{1}{t} \log |\partial_{x_1}F_1^t(\mathbf{x})v_1| = \lambda_1.
\]
Since, by Lemma \ref{limitlog0} applied to both partial derivatives of
\(\psi\), for \(\mu\)-almost every \(\mathbf{x}\),
\[
\lim_{t\to\infty}
\frac1t
\log|\partial_{x_i}\psi(F^t(\mathbf{x}))|
=0,
\qquad i=1,2,
\]
we have
\[
\lim_{t\to\infty}
\frac1t
\log
\left|
\frac{\partial_{x_2}\psi(F^t)}
{\partial_{x_1}\psi(F^t)}
\right|
=
0.
\]
Moreover, since the spectrum is simple and
\(\lambda_1>\lambda_2\),  provided $v_2\neq 0$ (the case $v_2=0$ can be easily treated separately), we have
\[
\lim_{t\to\infty}
\frac1t
\log
\left|
\frac{\partial_{x_2}F_2^t v_2}
{\partial_{x_1}F_1^t v_1}
\right|
=
\lambda_2-\lambda_1<0 .
\]
Therefore,
\[
\lim_{t\to\infty}
\frac1t
\log
\left|
\frac{
\partial_{x_2}\psi(F^t)
\frac{\partial_{x_2}F_2^t v_2}
{\partial_{x_1}F_1^t v_1}
}
{\partial_{x_1}\psi(F^t)}
\right|
=
\lim_{t\to\infty}
\frac1t
\log
\left|
\frac{\partial_{x_2}\psi(F^t)}
{\partial_{x_1}\psi(F^t)}
\right|
+
\lim_{t\to\infty}
\frac1t
\log
\left|
\frac{\partial_{x_2}F_2^t v_2}
{\partial_{x_1}F_1^t v_1}
\right|
=
0+\lambda_2-\lambda_1<0.
\]
Hence,
\[
\frac{
\partial_{x_2}\psi(F^t)
\frac{\partial_{x_2}F_2^t v_2}
{\partial_{x_1}F_1^t v_1}
}
{\partial_{x_1}\psi(F^t)}
\longrightarrow0
\]
exponentially fast for \(\mu\)-almost every \(\mathbf{x}\). Consequently, writing 
\[
\partial_{x_1}\psi(F^t)
+
\partial_{x_2}\psi(F^t)
\frac{\partial_{x_2}F_2^t v_2}
{\partial_{x_1}F_1^t v_1}
=
\partial_{x_1}\psi(F^t)
\left(
1+
\frac{
\partial_{x_2}\psi(F^t)
\frac{\partial_{x_2}F_2^t v_2}
{\partial_{x_1}F_1^t v_1}
}
{\partial_{x_1}\psi(F^t)}
\right),
\]
the second factor converges to \(1\), and therefore
\[
\lim_{t\to\infty}
\frac1t
\log
\left|
\partial_{x_1}\psi(F^t)
+
\partial_{x_2}\psi(F^t)
\frac{\partial_{x_2}F_2^t v_2}
{\partial_{x_1}F_1^t v_1}
\right|
=
0.
\]
Combining all of the above
\[
\lim_{t\to\infty}
\frac1t
\log
|\partial_{x_1}F_1^t(\mathbf{x})v_1|
=
\lambda_1,
\]
and we obtain
\[
\lambda_\psi(\mathbf{x},\mathbf{v})
=
\lambda_1
=
\lambda(\mathbf{x},\mathbf{v}).
\]
If $v_1 = 0$ then,
\[
\lambda_\psi (\mathbf{x},\mathbf{v}) = \lim_{t\to\infty} \frac{1}{t} \log \big| \partial_{x_2} \psi(F^t) \, \partial_{x_2}F_2^tv_2 \big|
= \lambda_2=\lambda(\mathbf{x},\mathbf{v}).
\]
\end{proof}

\begin{remark}
The arguments in Proposition $\ref{same spectrum}$ extend to any finite-dimensional setting.
More precisely, let $F:\mathbb{T}^d\to\mathbb{T}^d$ and
$\psi:\mathbb{T}^d\to\mathbb{R}$ satisfy the  assumptions analogous to Assumption \ref{sameLyapunov}. Assume additionally that the Jacobian of $F$ is
diagonal and has a simple Lyapunov spectrum. Moreover, assume that the
sublevel estimate holds for every partial derivative
$\partial_{x_i}\psi$, $i=1,\ldots,d$. Then, for every non-zero tangent vector
the statements are valid with the same arguments.
\end{remark}

Now consider a general differentiable map \(F: M\rightarrow M\) on a Riemannian manifold \(M\) with a simple invariant Oseledets' splitting $TM=\bigoplus_{i=1}^d E_i$. One can observe that by using the chain rule,
\[
D_{\mathbf{x}}(\psi\circ F^t)(\mathbf{v})=\nabla\psi({F^t (\mathbf{x})}) D_{\mathbf{x}}F^t(\mathbf{v}),
\]
Here $\nabla\psi({F^t (\mathbf{x})}) \in (\mathbb{R}^d)^*$ is a covector (row vector) and $D_{\mathbf{x}}F^t(\mathbf{v}) \in \mathbb{R}^d$ is a vector, so the expression is the natural dual pairing. Let \(\mathbf{v_i}(\mathbf{x})\in E_i(\mathbf{x})\) be a unit Lyapunov vector corresponding to the Lyapunov exponent \(\lambda_i(\mathbf{x})\). Since \(E_i\) is one-dimensional,
\[
D_{\mathbf{x}}F^t(\mathbf{v_i(\mathbf{x})})
=
\|D_{\mathbf{x}}F^t(\mathbf{v_i(\mathbf{x}))}\|\mathbf{v_i}(F^t(\mathbf{x})),
\]
up to a sign, which disappears after taking absolute values.
Therefore,
\[
D_{\mathbf{x}}(\psi\circ F^t)(\mathbf{v}_i(\mathbf{x}))
=
\|D_{\mathbf{x}}F^t(v_i(x))\|
\,\nabla\psi({F^t (\mathbf{x})})
\mathbf{v_i}(F^t(\mathbf{x})).
\]
Hence
\[
\lambda_\psi(\mathbf{x},\mathbf{v_i(\mathbf{x})}) = 
\lim_{t\to\infty}
\frac1t
\log
\|D_{\mathbf{x}}F^t(\mathbf{v_i}(\mathbf{x}))\|
+
\lim_{t\to\infty}
\frac1t
\log
\left|
\nabla\psi(F^t(\mathbf{x}))
\mathbf{v_i}(F^t(\mathbf{x}))
\right|.\]
By the definition of the Lyapunov exponent,
\[
\lim_{t\to\infty}
\frac1t
\log
\|D_{\mathbf{x}}F^t(\mathbf{v_i}(\mathbf{x}))\|
=
\lambda_i(\mathbf{x}).
\]
Thus
\[
\lambda_\psi(\mathbf{x},\mathbf{v_i(\mathbf{x})})
=
\lambda_i(\mathbf{x})
+
\lim_{t\to\infty}
\frac1t
\log
\left|
\nabla\psi(F^t(\mathbf{x}))
\mathbf{v_i}(F^t(\mathbf{x}))
\right|.
\]
Defining
\[
h_i(\mathbf{x})
=
\left|
\nabla\psi(\mathbf{x}) \mathbf{v_i}(\mathbf{x})
\right|,
\]
we obtain
\[
\lambda_\psi(\mathbf{x},\mathbf{v_i(\mathbf{x})})
=
\lambda_i(\mathbf{x})
+
\lim_{t\to\infty}
\frac1t
\log h_i(F^t(\mathbf{x})).
\]
So, $\lambda_\psi(\mathbf{x},\mathbf{v_i(\mathbf{x})})=\lambda(\mathbf{x},\mathbf{v_i(\mathbf{x})})$ whenever
\[
\lim_{t\to\infty}
\frac1t
\log h_i(F^t(\mathbf{x}))
=
0.
\]

\begin{remark}
    Let $(M,\mathcal{B})$ be a measurable space, and let $F: M\to M$ be a measurable map admitting an invariant probability measure $\mu$. Let $h_i(\mathbf{x})=
\left|
\nabla\psi(\mathbf{x})\mathbf{v_i}(\mathbf{x})
\right|$ as above such that $h_i
:M\to (0,\infty)$ be a positive measurable function and $\log h_i \in L^1(M,\mu)$. Then 
\[
\lim_{t\to\infty}\frac{1}{t}\log h_i(F^t(x))=0
\quad \text{for } \mu\text{-almost every } x\in M.
\]
The above result follows from a standard application of the Borel--Cantelli
lemma, and the proof can be found, for example, in Viana's notes \cite[Lemma 3.7]{viana2014lectures}.
Verifying the condition $\log h_i \in L^1(M,\mu)$ is in general non-trivial. The map $x \mapsto \mathbf{v_i}(x)$ is only a measurable selection of Oseledets subspaces, defined $\mu$-a.e., and typically varies on a measurable Grassmannian bundle. Hence, quantities of the form
\[
h_i(x)=|\nabla\psi(x)\mathbf{v_i}(x)|
\]
inherit only measurable regularity even when $\psi$ is smooth. 
\end{remark}
\noindent While we do not address this question here, it would be interesting to determine for which classes of observables and dynamical systems the condition $(\log h_i \in L^1(M,\mu))$ holds. Determining 
natural sufficient conditions 
for log $h_i \in L^1(M,\mu)$ 
remains open in this setting. \newpage

\section{Proof of the main Theorems}{\label{Proof of the Main Theorems}}
\subsection{Proof of Theorem \ref{Maintheorem1}}
To simplify the notation, we omit the superscript \((N)\) throughout and write
\[
F_k^{(N)}=F_k,\qquad
\psi^{(N)}=\psi,\qquad
\mathbf{v}^{(N)}=\mathbf{v}~~\text{and}~~\mathbf{x}^{(N)}=\mathbf{x}.
\]
By the chain rule, the quantity $\chi_{\psi,N,t}(\mathbf{x}(\omega), \mathbf{v})$ can be written as

\begin{equation}
\chi_{\psi,N,t}(\mathbf{x}(\omega),\mathbf{v})
=
\frac{1}{t}
\log\left|
\nabla\psi\big(F^t(\mathbf{x}(\omega))\big)D_{\mathbf{x}(\omega)}F^t(\mathbf{v})
\right|,
\qquad
\mathbf{x}=(x_1,\ldots,x_N)\sim\mu_0^{\otimes N}.
\end{equation}
where
\[
F^t(\mathbf{x}(\omega))
=
\bigl(
F_1^t(\mathbf{x}(\omega)),\ldots,
F_N^t(\mathbf{x}(\omega))
\bigr),
\]
with
\[
F_k^t(\mathbf{x}(\omega))
=
f\bigl(F_k^{t-1}(\mathbf{x}(\omega))\bigr)
+
\frac{1}{N}\sum_{i\neq k}
h\bigl(
F_k^{t-1}(\mathbf{x}(\omega)),
F_i^{t-1}(\mathbf{x}(\omega))
\bigr),
\]
and $\nabla\psi(F^t(\mathbf{x}(\omega)))$ denotes the gradient of
$\psi$ evaluated at $F^t(\mathbf{x}(\omega))$.
The Jacobian matrix is
$$
D_{\mathbf{x}} F^t = [ {\partial F^t_k}/{\partial x_j} ]_{1 \le k,j \le N}~~
\text{and}~~
\nabla \psi(F^t(\mathbf{x}(\omega))) = \left( \partial_1 \psi(F^t(\mathbf{x}(\omega))), \dots, \partial_N \psi(F^t(\mathbf{x}(\omega))) \right).
$$
Below we will use the shorthand notation $\partial_j F_k^t := \partial F_k^t / \partial x_j$,
for $\mathbf{v} = (v_1,\dots,v_N) \in \mathbb{R}^{N}$, we get
\begin{equation*}
\chi_{\psi,N,t}(\mathbf{x}(\omega),\mathbf{v})
=
\frac{1}{t}
\log \Bigg|
\sum_{k=1}^N
\partial_k\psi\big(
F_1^t(\mathbf{x}(\omega)),\ldots,F_N^t(\mathbf{x}(\omega))
\big)
\Bigg(
\partial_kF_k^t(\mathbf{x}(\omega))v_k
+
\sum_{j\ne k}
\partial_jF_k^t(\mathbf{x}(\omega))v_j
\Bigg)
\Bigg|.
\end{equation*}

\begin{equation}\label{termsequation}
=
\frac{1}{t}\log\Bigg|
\underbrace{
\sum_{k=1}^N
\partial_k\psi\big(
F_1^t(\mathbf{x}(\omega)),\ldots,F_N^t(\mathbf{x}(\omega))
\big)
\partial_kF_k^t(\mathbf{x}(\omega))v_k
}_{\text{Term I}}
+
\underbrace{
\sum_{k=1}^N
\Bigg(
\partial_k\psi\big(
F_1^t(\mathbf{x}(\omega)),\ldots,F_N^t(\mathbf{x}(\omega))
\big)
\sum_{j\ne k}
\partial_jF_k^t(\mathbf{x}(\omega))v_j
\Bigg)
}_{\text{Term II}}
\Bigg|.
\end{equation}
Define
\begin{align*}
&G_{1,N}(\mathbf{x}(\omega))
=
\sum_{k=1}^N
\partial_k \psi\bigl(
F_1^{t}(\mathbf{x}(\omega)), \ldots,
F_N^{t}(\mathbf{x}(\omega))
\bigr)
\partial_k F_k^{t}(\mathbf{x}(\omega))v_k,\\&
G_{2,N}(\mathbf{x}(\omega))
=
\sum_{k=1}^N
\partial_k \psi\bigl(
F_1^{t}(\mathbf{x}(\omega)), \ldots,
F_N^{t}(\mathbf{x}(\omega))
\bigr)
\sum_{j\ne k}
\partial_j F_k^{t}(\mathbf{x}(\omega))\,v_j,\\&
H_{1,N}(\mathbf{x}(\omega))
=
\sum_{k=1}^{N}
\partial_k \psi\left(
f_{\mu_0}^{[t]}(x_1(\omega)), \ldots,
f_{\mu_0}^{[t]}(x_N(\omega))
\right)
(f_{\mu_0}^{[t]})'(x_k(\omega))v_k,\\&
H_{2,N}(\mathbf{x}(\omega))
=
\bar{v}\sum_{k=1}^{N}
\partial_k \psi\left(
f_{\mu_0}^{[t]}(x_1(\omega)), \ldots,
f_{\mu_0}^{[t]}(x_N(\omega))
\right)
g_t(x_k(\omega)).
\end{align*}
where, for fixed $s \in \mathbb{T}$, we define
\begin{align*}
g_t(s)
&= \lim_{N\rightarrow\infty}
\int_{\mathbb{T}^{N-1}}
\sum_{j\ne k}
\partial_j F_k^t
(x_1,\ldots,x_{k-1},s,x_{k+1},\ldots,x_N)
\prod_{j\ne k} d\mu_0(x_j)\\
&=
\lim_{N\to\infty}
\mathbb{E}
\left[
\sum_{j\neq k}\partial_j F_k^t(x_1,...,x_N)
\,\middle|\, x_k=s
\right].
\end{align*}
The expectation is with respect to the reference measure \(\mathbb P=\mu_0^{\mathbb N}\) and the limit is assumed to exist uniformly in $s$. By the exchangeability of the globally coupled system, this quantity is independent of the choice of index $k$, and defines a function $g_t:\mathbb{T}\to\mathbb{R}$.
Our main goal is to find
\begin{equation}
\lim_{N\to\infty}\frac{1}{t}\log
\big|(G_{1,N}(\mathbf{x}(\omega))+G_{2,N}(\mathbf{x}(\omega)))\big|.
\end{equation}

\noindent The proof of the main theorem is carried out through a sequence of
auxiliary lemmas. Lemmas \ref{epsilonnet}--\ref{g_t(x_k)} are used to
prove that, almost surely,
\[
\lim_{N\to\infty}
\left|
(G_{1,N}(\mathbf{x}(\omega))+G_{2,N}(\mathbf{x}(\omega)))
-
(H_{1,N}(\mathbf{x}(\omega))+H_{2,N}(\mathbf{x}(\omega)))
\right|
=0.
\]
In Lemma \ref{h1h2}, we prove that, almost surely,
\[
\lim_{N\to\infty}
\left|
(H_{1,N}(\mathbf{x}(\omega))+H_{2,N}(\mathbf{x}(\omega)))
-
\mathbb{E}\big[
H_{1,N}(\mathbf{x}(\omega))+H_{2,N}(\mathbf{x}(\omega))
\big]
\right|
=0,~~ \text{where}
\]
\[
\begin{aligned}
\mathbb{E}\!\left[
H_{1,N}(\mathbf{x}(\omega))
+
H_{2,N}(\mathbf{x}(\omega))
\right]
&=\int_{\mathbb{T}^{N}}
\left[
H_{1,N}(x_1,\ldots,x_N)
+
H_{2,N}(x_1,\ldots,x_N)
\right]
\,d\mu_0(x_1)\cdots d\mu_0(x_N).
\end{aligned}
\]
Here, $x_1,\ldots,x_N$ are dummy integration variables; in particular,
they denote the coordinates of the point of $\mathbb{T}^N$ being
integrated over, rather than the random variables
$x_1(\omega),\ldots,x_N(\omega)$.
Lemma \ref{exph1h2} computes the expectation
\[
\mathbb{E}\big[
H_{1,N}(\mathbf{x}(\omega))+H_{2,N}(\mathbf{x}(\omega))
\big].
\]
Finally, in Lemma \ref{relationtodifferential}, we relate this
expectation to the differential of the self-consistent transfer
operator. The proof of the theorem is then obtained by combining the
conclusions of these lemmas.

\begin{lemma}\label{epsilonnet}
There exists a set $\Omega_0$ of probability one such that, for every
$\omega\in\Omega_0$,
\[
\max_{1\le k\le N}
\sup_{s\in\mathbb T}
\left|
\frac1N\sum_{j\neq k}h(s,x_j(\omega))
-
\frac{N-1}{N}\int_{\mathbb T}h(s,y)\,d\mu_0(y)
\right|
\longrightarrow0
\]
as $N\to\infty$.
Moreover,
\[
\sup_{1\leq k\leq N}
\left|
F_k(\mathbf{x}(\omega))-f_{\mu_0}(x_k(\omega))
\right|
\longrightarrow 0
\qquad\text{as }N\to\infty .
\]
\end{lemma}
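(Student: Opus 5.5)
The plan is a uniform law of large numbers over the compact parameter $s\in\mathbb T$, proved with an $\varepsilon$-net and Hoeffding's inequality, followed by a union bound over $k$ and Borel--Cantelli. Fix $k$ and $s$. The summands $h(s,x_j(\omega))$, $j\neq k$, are i.i.d., bounded by $\|h\|_\infty$, and have mean $\int_{\mathbb T}h(s,y)\,d\mu_0(y)$. Put $\Delta_{k,N}(s):=\frac1N\sum_{j\neq k}\bigl(h(s,x_j)-\int h(s,y)\,d\mu_0(y)\bigr)$. Hoeffding's inequality then gives
\[
\mathbb P\bigl(|\Delta_{k,N}(s)|>\varepsilon\bigr)\le 2\exp\!\left(-\frac{N^2\varepsilon^2}{2(N-1)\|h\|_\infty^2}\right)\le 2e^{-cN\varepsilon^2}.
\]

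To make this uniform in $s$, I will use that $h\in C^2$, so $s\mapsto h(s,y)$ is Lipschitz with constant $L=\|\partial_1 h\|_\infty$, uniformly in $y$. Hence $\Delta_{k,N}$ is $2L$-Lipschitz in $s$. Take a finite grid $\{s_1,\dots,s_M\}\subset\mathbb T$ of mesh $\varepsilon/(4L)$, so $M\lesssim L/\varepsilon$ independently of $N$. Then
\[
\sup_s|\Delta_{k,N}(s)|\le\max_i|\Delta_{k,N}(s_i)|+\varepsilon/2 .
\]
A union bound over the $M$ grid points and the $N$ indices $k$ gives
\[
\mathbb P\Bigl(\max_k\sup_s|\Delta_{k,N}|>\varepsilon\Bigr)\le 2NMe^{-cN\varepsilon^2/4},
\]
which is summable in $N$. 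By Borel--Cantelli, almost surely $\max_k\sup_s|\Delta_{k,N}|\le\varepsilon$ eventually. Intersecting these events over $\varepsilon=1/m$, $m\in\mathbb N$, gives the full-measure set $\Omega_0$ and the first claim. A simpler alternative would be to apply the uniform strong law once to $\frac1N\sum_{j=1}^N h(s,x_j)$ and absorb the omitted $j=k$ term, which is $O(\|h\|_\infty/N)$ uniformly in $k$. I would mention this route, since it removes the union over $k$ altogether.

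For the second claim, I write
\[
F_k(\mathbf x)-f_{\mu_0}(x_k)=\frac1N\sum_{j\neq k}h(x_k,x_j)-\int h(x_k,y)\,d\mu_0(y),
\]
understood modulo $1$ via lifts. Evaluating the first claim at $s=x_k(\omega)$ bounds this by $\sup_s|\Delta_{k,N}(s)|+\frac1N\|h\|_\infty$, and the right-hand side tends to $0$ uniformly in $k$ on $\Omega_0$.

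The main obstacle is the uniformity: both the supremum over $s$ and the maximum over $k\le N$ must be handled, and the latter grows with $N$. The exponential Hoeffding tail, together with a grid whose size does not depend on $N$, is exactly what keeps the union bound summable. Some care is also needed with the point $s=x_k$: it is random, but it is independent of $(x_j)_{j\neq k}$. Since the first claim is a supremum over all $s$, this dependence causes no difficulty.
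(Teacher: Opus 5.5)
Your proposal is correct and follows essentially the same route as the paper: Hoeffding for fixed $(k,s)$, an $\varepsilon$-net in $s$ via the Lipschitz dependence of $h$ on its first variable, a union bound over grid points and over $k\le N$ giving the summable bound $2NMe^{-cN\varepsilon^2}$, Borel--Cantelli, and an intersection over $\varepsilon=1/m$. For the second claim, you evaluate at $s=x_k$ and absorb the extra $\frac1N\int h(x_k,y)\,d\mu_0(y)$ term coming from the factor $\frac{N-1}{N}$; this simply makes explicit the step the paper calls immediate.
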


\begin{proof}
Fix $k\in\mathbb N$ and $s\in\mathbb T$. Define $Y_j(\omega):=h(s,x_j(\omega)),
 j\neq k.$
Then $\{Y_j\}_{j\neq k}$ are bounded i.i.d. random variables with
\[
\mathbb E[Y_j]
=
\int_{\mathbb T}h(s,y)\,d\mu_0(y).
\]
Hence, by Hoeffding's inequality, for every $\varepsilon>0$,
\[
\mathbb P\!\left(
\left|
\frac1N\sum_{j\neq k}Y_j
-
\frac{N-1}{N}\mathbb E[Y_j]
\right|
>\varepsilon
\right)
\le
2e^{-c\varepsilon^2N},
\]
for some constant $c>0$ independent of $N$, $k$, and $s$ -- from the regularity of $h$. To obtain uniformity in $s$, cover $\mathbb T$ by
$M=\lceil\varepsilon^{-1}\rceil$ arcs of length at most $\varepsilon$,
and let $s_1,\dots,s_M$ be representative points from each arc. For every $m\in\{1,...,M\}$, define
\[
A_m^k
=
\left\{
\omega:
\left|
\frac1N\sum_{j\neq k}h(s_m,x_j(\omega))
-
\frac{N-1}{N}
\int_{\mathbb T}h(s_m,y)\,d\mu_0(y)
\right|
>
\varepsilon
\right\}.
\]
By the union bound,
\[
\mathbb P\!\left(
\bigcup_{m=1}^M A_m^k
\right)
\le
2Me^{-c\varepsilon^2N}.
\]
Since $h$ is Lipschitz in its first variable, there exists a constant
$K>0$ such that, whenever $|s-s_m|\le\varepsilon$,
\[
\left|
\frac1N\sum_{j\neq k}h(s,x_j(\omega))
-
\frac1N\sum_{j\neq k}h(s_m,x_j(\omega))
\right|
\le
K\varepsilon,
\]
and
\[
\left|
\int_{\mathbb T}h(s,y)\,d\mu_0(y)
-
\int_{\mathbb T}h(s_m,y)\,d\mu_0(y)
\right|
\le
K\varepsilon.
\]
Therefore,
\[
\sup_{s\in\mathbb T}
\left|
\frac1N\sum_{j\neq k}h(s,x_j(\omega))
-
\frac{N-1}{N}
\int_{\mathbb T}h(s,y)\,d\mu_0(y)
\right|
\le
(1+2K)\varepsilon
\]
whenever $\omega\notin\bigcup_{m=1}^MA_m^k$.
Hence,
\[
\mathbb P\!\left(
\sup_{s\in\mathbb T}
\left|
\frac1N\sum_{j\neq k}h(s,x_j(\omega))
-
\frac{N-1}{N}
\int_{\mathbb T}h(s,y)\,d\mu_0(y)
\right|
>
(1+2K)\varepsilon
\right)
\le
2Me^{-c\varepsilon^2N}.
\]
Taking the union over $k=1,\dots,N$ gives
\[
\mathbb P\!\left(
\max_{1\le k\le N}
\sup_{s\in\mathbb T}
\left|
\frac1N\sum_{j\neq k}h(s,x_j(\omega))
-
\frac{N-1}{N}
\int_{\mathbb T}h(s,y)\,d\mu_0(y)
\right|
>
(1+2K)\varepsilon
\right)
\le
2NMe^{-c\varepsilon^2N}.
\]
Since
\[
\sum_{N=1}^{\infty}
2NMe^{-c\varepsilon^2N}
<
\infty,
\]
the Borel--Cantelli lemma implies that, for every $\varepsilon>0$, there exists a  set $\Omega_\varepsilon$ of dull measure such that, for every $\omega\in\Omega_\varepsilon$, there exists $N_0(\omega,\varepsilon)$ satisfying
\[
\max_{1\le k\le N}
\sup_{s\in\mathbb T}
\left|
\frac1N\sum_{j\neq k}h(s,x_j(\omega))
-
\frac{N-1}{N}
\int_{\mathbb T}h(s,y)\,d\mu_0(y)
\right|
\le
(1+2K)\varepsilon
\]
for all $N\ge N_0(\omega,\varepsilon)$. Finally, let
\[
\Omega_0
=
\bigcap_{l=1}^{\infty}
\Omega_{1/l}.
\]
Since $\mathbb P(\Omega_0)=1$, the desired almost sure convergence follows. The convergence
$$\sup_{1\leq k\leq N}
\left|
F_k(\mathbf{x}(\omega))-f_{\mu_0}(x_k(\omega))
\right|
\longrightarrow 0
\qquad\text{as }N\to\infty$$
is immediate from the definition of $F_k$ and the uniform convergence established above.
\end{proof}
\begin{remark}
Throughout the paper, whenever concentration inequalities such as
Hoeffding's inequality or McDiarmid's inequality are applied to
quantities indexed by $k$, we use the same union bound and
Borel--Cantelli argument as in Lemma \ref{epsilonnet}. Therefore, the
resulting convergence holds uniformly over all indices
$k=1,\ldots,N$ on a single probability-one set. In particular, such
convergences will be understood in the sense that 
\[
\max_{1\leq k\leq N}|A_{N,k}(\mathbf{x})|
\longrightarrow0
\quad\text{almost surely}.
\]
\end{remark}
\noindent We suppress the $\omega$-dependence from that point onward, with the understanding that $x_i(\omega)$ denotes the random realisation in almost-sure statements, whereas $x_i$ appearing under the integration over $\mathbb T^N$ is simply the integration variable. Moreover, all limits will be understood as \emph{\(\mathbb P\)-almost sure convergence} unless otherwise specified. Recall the definition in \eqref{Eq:f^[m]}.

\begin{lemma}\label{mutmut}
For any $t\geq1$, there exists a set $\Omega_0$ of probability one such that,
for every $\omega\in\Omega_0$,
\[
\max_{1\leq k\leq N}
\left|
F_k^t(\mathbf{x})-f_{\mu_0}^{[t]}(x_k)
\right|
\longrightarrow 0
\quad\text{as }N\to\infty .
\]
\end{lemma}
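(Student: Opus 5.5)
We argue by induction on $t$. The case $t=1$ is exactly the second statement of Lemma~\ref{epsilonnet}: $F_k(\mathbf{x})-f_{\mu_0}(x_k)=\frac1N\sum_{j\neq k}h(x_k,x_j)-\int h(x_k,y)\,d\mu_0(y)$. The bracket is bounded by the uniform-in-$s$ estimate evaluated at $s=x_k$, plus the $O(1/N)$ discrepancy between $\frac{N-1}{N}$ and $1$.

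Assume now that
\[
\varepsilon_{N}^{(t-1)}:=\max_{1\le k\le N}\bigl|F_k^{t-1}(\mathbf{x})-f_{\mu_0}^{[t-1]}(x_k)\bigr|\to0
\]
on a probability-one set. Write $y_k:=F_k^{t-1}(\mathbf{x})$ and $z_k:=f_{\mu_0}^{[t-1]}(x_k)$. Since $f^{[t]}_{\mu_0}=f_{\mu_{t-1}}\circ f^{[t-1]}_{\mu_0}$, we have
\[
F_k^t(\mathbf{x})-f_{\mu_0}^{[t]}(x_k)
=\bigl[f(y_k)-f(z_k)\bigr]
+\frac1N\sum_{j\ne k}\bigl[h(y_k,y_j)-h(z_k,z_j)\bigr]
+\Bigl[\frac1N\sum_{j\ne k}h(z_k,z_j)-\int h(z_k,y)\,d\mu_{t-1}(y)\Bigr].
\]
The first bracket is at most $\|f'\|_\infty\varepsilon_N^{(t-1)}$. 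The second is at most $2\|h\|_{C^1}\varepsilon_N^{(t-1)}$, because each summand is controlled by the Lipschitz constant of $h$ times $|y_k-z_k|+|y_j-z_j|$. Both bounds are uniform in $k$, and all differences are computed on lifts as in the paper's convention on $\mathbb T$.

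The third bracket is the main point and the step where care is needed. The $z_j$ are i.i.d.\ with law $\mu_{t-1}$, being images of the i.i.d.\ $x_j$ under the fixed deterministic map $f_{\mu_0}^{[t-1]}$. This map depends only on $\mu_0,\dots,\mu_{t-2}$ and not on $N$ or $\omega$. Hence the argument of Lemma~\ref{epsilonnet} applies verbatim with $\mu_0$ replaced by $\mu_{t-1}$ and $x_j$ by $z_j$: Hoeffding at each point of an $\varepsilon$-net in $s$, Lipschitz continuity of $h$ in the first variable, a union bound over $k\le N$, and Borel--Cantelli. This gives
\[
\max_{k}\sup_{s}\Bigl|\frac1N\sum_{j\ne k}h(s,z_j)-\frac{N-1}{N}\int h(s,y)\,d\mu_{t-1}(y)\Bigr|\to0
\]
almost surely. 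Evaluating at $s=z_k$ controls the third bracket. The uniformity in $s$ is essential here because $z_k$ is itself random and correlated with the summation index set only through the exclusion $j\ne k$.

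Finally, intersect the countably many full-measure sets obtained at steps $1,\dots,t$ to get a single $\Omega_0$. On $\Omega_0$ we have $\varepsilon_N^{(t)}\le C(f,h)\,\varepsilon_N^{(t-1)}+o(1)\to0$, which closes the induction.
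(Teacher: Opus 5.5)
Your proof is correct and follows the paper's argument. Both proceed by induction on $t$, bound the $f$- and $h$-differences by Lipschitz constants times $\varepsilon_N^{(t-1)}$, and handle the remaining term with the uniform-in-$s$ law of large numbers of Lemma~\ref{epsilonnet}. That law is applied to the i.i.d.\ variables $f_{\mu_0}^{[t-1]}(x_j)$ with law $\mu_{t-1}$ (the paper phrases this via the kernel $\widetilde h(s,x)=h\bigl(s,f_{\mu_0}^{[t-1]}(x)\bigr)$) and evaluated at $s=f_{\mu_0}^{[t-1]}(x_k)$. Your explicit intersection of the full-measure sets and the $\frac{N-1}{N}$ correction are details the paper leaves implicit.
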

\begin{proof}
We proceed by induction on $t$. For $t=1$, the result follows from Lemma \ref{epsilonnet}. Suppose the claim holds for $t=m$, i.e.,
\[
\delta_m^{(N)}:=\max_{1\leq k\leq N}
\left|
F_k^t(\mathbf{x})-f_{\mu_0}^{[t]}(x_k)
\right|\rightarrow 0.
\]
Notice that,
\begin{equation}\label{m general}
F_k^{m+1}(\mathbf{x})
=
f(F_k^{m}(\mathbf{x}))
+\frac{1}{N}\sum_{i\neq k}
h(F_k^{m}(\mathbf{x}),F_i^{m}(\mathbf{x})).
\end{equation}
Since $h\in C^1(\mathbb T^2,\mathbb R)$, it is Lipschitz. Let
$
L=\sup_{(x,y)\in\mathbb T^2}\|D h(x,y)\|.
$
Then,
\begin{align*}
\frac1N\sum_{i\neq k}
\left|
h(F_k^{m}(\mathbf{x}),F_i^{m}(\mathbf{x}))
-
h(f_{\mu_0}^{[m]}(x_k),f_{\mu_0}^{[m]}(x_i))
\right|
&\leq
\frac{L}{N}\sum_{i\neq k}
\left(
|F_k^{m}(\mathbf{x})-f_{\mu_0}^{[m]}(x_k)|
+
|F_i^{m}(\mathbf{x})-f_{\mu_0}^{[m]}(x_i)|
\right)\\
&\le 2L\delta_{m}^{(N)}
\end{align*}
and therefore, 
\[
\max_{1\le k\le N}\frac1N\sum_{i\neq k}
\left|
h(F_k^{m}(\mathbf{x}),F_i^{m}(\mathbf{x}))
-
h(f_{\mu_0}^{[m]}(x_k),f_{\mu_0}^{[m]}(x_i))
\right|
\longrightarrow0 .
\]
Analogously, \(\max_{1\le k\le N}|(f\bigl(F_k^m(x)\bigr)
-
f\bigl(f_{\mu_0}^{[m]}(x_k)\bigr)|\rightarrow 0.\)
Hence, it remains to determine the limit of
\[
f(f_{\mu_0}^{[m]}(x_k))
+
\frac1N\sum_{i\neq k}
h(f_{\mu_0}^{[m]}(x_k),f_{\mu_0}^{[m]}(x_i)).
\]
Since the convergence in Lemma \ref{epsilonnet} is uniform in the first
argument, we may apply it to the function
\[
\widetilde{h}(s,x)
:=
h\bigl(s,f_{\mu_0}^{[m]}(x)\bigr).
\]
Moreover, since the variables $(x_i)_{i\geq1}$ are i.i.d. with
law $\mu_0$, the variables
$
f_{\mu_0}^{[m]}(x_i), i\geq1,
$
are i.i.d. with law
$
\mu_m=(f_{\mu_0}^{[m]})_*\mu_0.
$
Therefore, uniformly in $s\in\mathbb{T}$,
\[
\frac{1}{N}\sum_{i\neq k}
h\bigl(s,f_{\mu_0}^{[m]}(x_i)\bigr)
-
\int_{\mathbb{T}} h(s,y)\,d\mu_m(y)
\longrightarrow 0.
\]
Taking
$
s=f_{\mu_0}^{[m]}(x_k)
$
and using the uniformity in $s$, we obtain, uniformly in $k$,
\[
\frac{1}{N}\sum_{i\neq k}
h\bigl(f_{\mu_0}^{[m]}(x_k),
f_{\mu_0}^{[m]}(x_i)\bigr)
\longrightarrow
\int_{\mathbb{T}}
h\bigl(f_{\mu_0}^{[m]}(x_k),y\bigr)\,d\mu_m(y).
\]
Therefore, recalling \eqref{m general}
\[
F_k^{m+1}(\mathbf{x})
\longrightarrow
f(f_{\mu_0}^{[m]}(x_k))
+
\int_{\mathbb T}
h(f_{\mu_0}^{[m]}(x_k),y)\,d\mu_m(y).
\]
uniformly in k and and hence
\[
F_k^{m+1}(\mathbf{x})
\longrightarrow
f_{\mu_0}^{[m+1]}(x_k).
\]
This completes the induction.
\end{proof}

 From now on, we will omit the notation $\mathbf{x}$ from $F_k^t(\mathbf{x})$ whenever there is no ambiguity. Any exceptions to this convention will be stated explicitly. In the following lemmas, we use the notation $O(\cdot)$ with constants
that may depend on the fixed time $t$, but are independent of the system
size $N$, the indices $j,k$, and the point $\mathbf{x}$ (where applicable).
All such estimates are therefore uniform with respect to $N$, $j$, and
$k$ for each fixed $t$.

\begin{lemma}{\label{partialj}}
For any fixed $t \ge 1$, 
\[
\partial_j F_k^t=
\begin{cases}
O(1), & j=k,\\
O(1/N), & j \neq k.
\end{cases}
\]
\end{lemma}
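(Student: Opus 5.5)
We argue by induction on $t$, working directly with the recursion obtained by differentiating the defining relation of the coupled map. For $t=1$ we differentiate \eqref{globallycoupledmap}:
\[
\partial_k F_k = f'(x_k) + \frac1N\sum_{i\neq k}\partial_1 h(x_k,x_i),
\qquad
\partial_j F_k = \frac1N\,\partial_2 h(x_k,x_j)\quad (j\neq k).
\]
Since $f\in C^2$ and $h\in C^2$ on compact domains, $|\partial_kF_k|\le \|f'\|_\infty+\|\partial_1h\|_\infty$ and $|\partial_jF_k|\le \|\partial_2 h\|_\infty/N$. Both bounds are uniform in $N$, $j$, $k$ and $\mathbf x$, which gives the base case.

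For the inductive step, we write $F^{t+1}=F\circ F^t$ and apply the chain rule:
\[
\partial_j F_k^{t+1}(\mathbf x)=\sum_{l=1}^N \partial_l F_k\bigl(F^t(\mathbf x)\bigr)\,\partial_j F_l^t(\mathbf x).
\]
We introduce constants $A_t$ and $B_t$ with $|\partial_kF_k^t|\le A_t$ and $|\partial_jF_k^t|\le B_t/N$ for $j\ne k$, uniformly in everything except $t$. For $j=k$ we split the sum into $l=k$ and $l\neq k$, which gives
\[
|\partial_kF_k^{t+1}|\le A_1A_t+\sum_{l\ne k}\frac{B_1}{N}\frac{B_t}{N}\le A_1A_t+\frac{B_1B_t}{N}.
\]
This is $O(1)$. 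For $j\neq k$ we split into $l=k$, $l=j$, and the remaining $N-2$ indices:
\[
|\partial_jF_k^{t+1}|\le A_1\frac{B_t}{N}+\frac{B_1}{N}A_t+(N-2)\frac{B_1}{N}\frac{B_t}{N}\le\frac{A_1B_t+B_1A_t+B_1B_t}{N}.
\]
This is $O(1/N)$. Setting $A_{t+1}=A_1A_t+B_1B_t$ and $B_{t+1}=A_1B_t+B_1A_t+B_1B_t$ closes the induction. These constants depend only on $t$, $\|f'\|_\infty$ and $\|Dh\|_\infty$.

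The one point needing care is that the Jacobian of $F$ is evaluated at the moved point $F^t(\mathbf x)$ and not at $\mathbf x$. This is harmless because the base-case bounds hold for every point of $\mathbb T^N$. The only other place the argument could go wrong is the sum over $l\notin\{j,k\}$: its $N$ terms, each of order $N^{-2}$, must sum to $O(1/N)$ and not to $O(1)$. Keeping this index splitting explicit is therefore the key step. The result is deterministic, so no probabilistic input is needed.
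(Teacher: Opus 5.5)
Your proof is correct and is essentially the paper's argument. Both proceed by induction on $t$, apply the chain rule to $F^{t+1}=F\circ F^t$, and split the sum into $l=k$, $l=j$, and the remaining $N-2$ indices, whose $O(N^{-2})$ terms add up to $O(1/N)$; your explicit recursion $A_{t+1}=A_1A_t+B_1B_t$, $B_{t+1}=A_1B_t+B_1A_t+B_1B_t$ just makes precise the uniformity in $N$, $j$, $k$, $\mathbf x$ that the paper leaves implicit in its $O(\cdot)$ notation.
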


\begin{proof}

Since $f\in C^1(\mathbb{T}, \mathbb{R})$ and $h\in C^1(\mathbb{T}^2, \mathbb{R})$, their derivatives are bounded. We have
\[
\partial_j F_k = \frac{1}{N}\,\partial_2 h(x_k, x_j) = O\!\left(\frac{1}{N}\right),\text{for}~j \neq k
~\text{and}~
\partial_k F_k = f'(x_k) + \frac{1}{N} \sum_{i \neq k} \partial_1 h(x_k, x_i) = O(1).
\]
Assume the statement holds for $t=m$, i.e.,
\[\partial_j F_k^m =
\begin{cases}
O(1), & j=k,\\
O(1/N), & j \neq k.
\end{cases}
\]
Then for $t=m+1$, recalling \eqref{m general}
\[
\partial_j F_k^{m+1} = f'(F_k^m) \partial_j F_k^m + \frac{1}{N} \sum_{i \neq k} \Big[ \partial_1 h(F_k^m,F_i^m) \partial_j F_k^m + \partial_2 h(F_k^m,F_i^m) \partial_j F_i^m \Big].
\]
and 
\[
\partial_k F_k^{m+1} = f'(F_k^m) \partial_k F_k^m + \frac{1}{N} \sum_{i \neq k} \Big[ \partial_1 h(F_k^m,F_i^m) \partial_k F_k^m + \partial_2 h(F_k^m,F_i^m) \partial_k F_i^m \Big].
\]
Using the induction hypotheses, for $j\neq k$ 
\[
\partial_j F_k^{m+1} = O(1/N)+(1/N) \Big((N-1)O(1/N)+O(1)+(N-2)O(1/N)\Big)=O(1/N).
\]
\[
\partial_k F_k^{m+1} = O(1)+(1/N) \Big((N-1)O(1)+(N-1)O(1/N)\Big)=O(1).
\]
\end{proof}

\begin{lemma}\label{mutmut1}
For any $t\geq1$, there exists a set $\Omega_0$ of probability one such that,
for every $\omega\in\Omega_0$,
\[
\max_{1\leq k\leq N}
\left|
\partial_k F_k^t(\mathbf{x})
-
(f_{\mu_0}^{[t]})'(x_k)
\right|
\longrightarrow 0
\quad\text{as }N\to\infty .
\]
\end{lemma}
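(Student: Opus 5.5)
We argue by induction on $t$, in the same way as Lemma~\ref{mutmut}, using the recursion for $\partial_kF_k^{m+1}$ written in the proof of Lemma~\ref{partialj}. The limit object also satisfies a chain rule: since $f_{\mu_0}^{[m+1]}=f_{\mu_m}\circ f_{\mu_0}^{[m]}$,
\[
(f_{\mu_0}^{[m+1]})'(x_k)=\Big(f'(f_{\mu_0}^{[m]}(x_k))+\int_{\mathbb T}\partial_1h(f_{\mu_0}^{[m]}(x_k),y)\,d\mu_m(y)\Big)(f_{\mu_0}^{[m]})'(x_k).
\]
For the base case $t=1$ we have $\partial_kF_k=f'(x_k)+\frac1N\sum_{i\neq k}\partial_1h(x_k,x_i)$. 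Lemma~\ref{epsilonnet} is stated for $h$, but its proof uses only that $h$ is bounded and Lipschitz in the first variable. Since $h\in C^2$, the function $\partial_1h$ has the same properties, so the same Hoeffding, $\varepsilon$-net and Borel--Cantelli argument applies to it. This gives $\max_k\sup_s|\frac1N\sum_{i\neq k}\partial_1h(s,x_i)-\int\partial_1h(s,y)\,d\mu_0(y)|\to0$ almost surely, and the case $t=1$ follows after substituting $s=x_k$.

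For the inductive step we assume $\max_k|\partial_kF_k^m-(f_{\mu_0}^{[m]})'(x_k)|\to0$ almost surely. The recursion has three pieces.

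\emph{First piece.} The term $f'(F_k^m)\partial_kF_k^m$ converges uniformly in $k$ to $f'(f_{\mu_0}^{[m]}(x_k))(f_{\mu_0}^{[m]})'(x_k)$. This uses Lemma~\ref{mutmut}, the Lipschitz continuity of $f'$ (because $f\in C^2$), the induction hypothesis, and the uniform bounds of Lemma~\ref{partialj}.

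\emph{Second piece.} The term $\frac1N\sum_{i\neq k}\partial_1h(F_k^m,F_i^m)\,\partial_kF_k^m$ is handled like the coupling term in Lemma~\ref{mutmut}. Since $\partial_1h$ is Lipschitz, we may replace $(F_k^m,F_i^m)$ by $(f_{\mu_0}^{[m]}(x_k),f_{\mu_0}^{[m]}(x_i))$ at a cost of $2L'\delta_m^{(N)}\to0$. We then apply the uniform law of large numbers to $\widetilde h(s,x)=\partial_1h(s,f_{\mu_0}^{[m]}(x))$, using that the $f_{\mu_0}^{[m]}(x_i)$ are i.i.d.\ with law $\mu_m$. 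This yields the limit $\int\partial_1h(f_{\mu_0}^{[m]}(x_k),y)\,d\mu_m(y)$. Multiplying by the convergent, uniformly bounded factor $\partial_kF_k^m$ preserves uniform convergence.

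\emph{Third piece.} The term $\frac1N\sum_{i\neq k}\partial_2h(F_k^m,F_i^m)\,\partial_kF_i^m$ is bounded by $\frac{N-1}{N}\|\partial_2h\|_\infty\max_{i\neq k}|\partial_kF_i^m|$. By Lemma~\ref{partialj} this is $O(1/N)$, uniformly in $k$, so it vanishes.

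Adding the three limits reproduces the chain-rule expression above, which closes the induction. The resulting probability-one set is the countable intersection of the sets used at each step, and each of those has full measure.

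The main obstacle is the uniformity in $k$. The constants in Lemma~\ref{partialj} must be uniform in $N$, $j$, $k$ and $\mathbf{x}$. That lemma asserts this, but it should be checked that its induction yields the explicit bound $|\partial_jF_k^m|\le C_m/N$ for $j\neq k$ for every $\mathbf{x}$, not merely a bound holding almost surely. We also need the almost-sure uniform law of large numbers for $\partial_1h$ composed with $f_{\mu_0}^{[m]}$. Here $f_{\mu_0}^{[m]}$ is only measurable in the second variable, but the $\varepsilon$-net argument requires Lipschitz continuity only in the first variable, so it goes through unchanged.
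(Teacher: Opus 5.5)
Your proposal is correct and follows essentially the same route as the paper. Both argue by induction on $t$: the uniform law of large numbers of Lemma~\ref{epsilonnet} is transferred to $\partial_1h$ (composed with $f_{\mu_0}^{[m]}$) for the diagonal coupling term, Lemma~\ref{mutmut} and the induction hypothesis handle the factors involving $\partial_kF_k^m$, and Lemma~\ref{partialj} makes the off-diagonal term $\frac1N\sum_{i\neq k}\partial_2h\,\partial_kF_i^m$ vanish.

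The uniformity you flag as the main obstacle is automatic. The bounds in Lemma~\ref{partialj} come only from sup norms of derivatives of $f$ and $h$, so they hold deterministically for every $\mathbf{x}$. Also, $f_{\mu_0}^{[m]}$ is in fact $C^2$, not merely measurable, though as you note this regularity is not needed.
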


\begin{proof}
We proceed by induction on $t$. For $t=1$, we have
\[
\partial_kF_k
=
f'(x_k)+\frac1N\sum_{i\neq k}\partial_1h(x_k,x_i).
\]
By Lemma \ref{epsilonnet},
\[
\max_{1\le k\le N}\left|\frac1N\sum_{i\neq k}\partial_1h(x_k,x_i)
-
\int_{\mathbb T}\partial_1h(x_k,y)\,d\mu_0(y)\right|\rightarrow 0.
\]
Therefore,
\[
\max_{1\le k\le N}\left|\partial_kF_k(\mathbf{x})
-
\left(f'(x_k)+\int_{\mathbb T}\partial_1h(x_k,y)\,d\mu_0(y)\right)\right|\rightarrow 0,
\]
and recall that $f'(x_k)+\int_{\mathbb T}\partial_1h(x_k,y)\,d\mu_0(y)=(f_{\mu_0})'(x_k)$. Now assume that for $t=m$,
\[
\max_{1\leq k\leq N}
\left|
\partial_kF_k^{m}(\mathbf{x})
-
(f_{\mu_0}^{[m]})'(x_k)
\right|
\longrightarrow0 .
\]
Using the chain rule,
\[
\partial_kF_k^{m+1}
=
\left(
f'(F_k^m)
+
\frac1N\sum_{i\neq k}
\partial_1h(F_k^m,F_i^m)
\right)
\partial_kF_k^m
+
\frac1N\sum_{i\neq k}
\partial_2h(F_k^m,F_i^m)
\partial_kF_i^m .
\]
By Lemma \ref{partialj}, for $i\neq k$,
\[
\partial_kF_i^m=O(1/N),
\]
and hence the second term converges to zero. It remains to study the first term. By the induction hypothesis and Lemma
\ref{mutmut},
\[
F_k^m(\mathbf{x})\longrightarrow f_{\mu_0}^{[m]}(x_k),
\qquad
\partial_kF_k^m(\mathbf{x})
\longrightarrow
(f_{\mu_0}^{[m]})'(x_k),
\]
uniformly in $k$. Therefore, using the same argument as in Lemma
\ref{epsilonnet}, one obtains
\[
\max_{1\leq k\leq N}\left|\partial_kF_k^{m+1}
-
\left(
f'(f_{\mu_0}^{[m]}(x_k))
+
\int_{\mathbb T}
\partial_1h
(f_{\mu_0}^{[m]}(x_k),f_{\mu_0}^{[m]}(y))
\,d\mu_0(y)
\right)
(f_{\mu_0}^{[m]})'(x_k)\right|\longrightarrow 0
\]
By the definition of $f_{\mu_m}$, the term in parentheses is
$
f_{\mu_m}'\!\left(f_{\mu_0}^{[m]}(x_k)\right).
$
Hence,
\[
f_{\mu_m}'\!\left(f_{\mu_0}^{[m]}(x_k)\right)
\left(f_{\mu_0}^{[m]}\right)'(x_k)
=
\left(f_{\mu_0}^{[m+1]}\right)'(x_k).
\]
\end{proof}

\begin{lemma}\label{partiall}
For any $t\geq 1$ and $l,j,k$,
\begin{equation}\label{MixedDerivatives}
\partial_l\partial_j F_k^t=
\begin{cases}
O(1/N), & l=j,\ j\neq k,\\
O(1/N), & l\neq j,\ j=k,\\
O(1/N), & l=k,\ j\neq k,\\
O(1/N^2), & l\neq j,\ j\neq k,\ l\neq k.
\end{cases}
\end{equation}
\end{lemma}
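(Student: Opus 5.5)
We argue by induction on $t$, following the pattern of the proof of Lemma~\ref{partialj}: we differentiate the recursion \eqref{m general} once more and use the bounds on first derivatives already established there. All constants depend on $t$ but not on $N$, on the indices $l,j,k$, or on $\mathbf{x}$. They are finite because $f\in C^2$ and $h\in C^2$ have bounded first and second derivatives.

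\textbf{Base case $t=1$.} From $F_k=f(x_k)+\frac1N\sum_{i\neq k}h(x_k,x_i)$ we compute the second derivatives directly.
\begin{itemize}
\item For $j\neq k$: $\partial_j\partial_jF_k=\frac1N\partial_{22}h(x_k,x_j)=O(1/N)$.
\item For $j\neq k$: $\partial_k\partial_jF_k=\frac1N\partial_{12}h(x_k,x_j)=O(1/N)$. The case $l\neq j$, $j=k$ is the same mixed derivative by symmetry of second derivatives.
\item If $l,j,k$ are pairwise distinct, then $\partial_l\partial_jF_k=0$, since no summand depends on both $x_l$ and $x_j$.
\end{itemize}
The diagonal case $l=j=k$ gives $O(1)$ and is not part of the claim.

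\textbf{Inductive step.} Assume the bounds hold at time $m$, for first derivatives (Lemma~\ref{partialj}) and second derivatives. Differentiating the expression for $\partial_jF_k^{m+1}$ in the proof of Lemma~\ref{partialj} with respect to $x_l$ produces three groups of terms.
\begin{enumerate}
\item $f''(F_k^m)\,\partial_lF_k^m\,\partial_jF_k^m+f'(F_k^m)\,\partial_l\partial_jF_k^m$.
\item $\frac1N\sum_{i\neq k}\big[\partial_{11}h\,\partial_lF_k^m\partial_jF_k^m+\partial_{12}h\,(\partial_lF_i^m\partial_jF_k^m+\partial_lF_k^m\partial_jF_i^m)+\partial_{22}h\,\partial_lF_i^m\partial_jF_i^m\big]$.
\item $\frac1N\sum_{i\neq k}\big[\partial_1h\,\partial_l\partial_jF_k^m+\partial_2h\,\partial_l\partial_jF_i^m\big]$.
\end{enumerate}
In each sum, we split off the at most three special indices $i\in\{j,l,k\}$. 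For those $i$, the first derivatives can be $O(1)$ and the second derivatives can be $O(1)$ (when $i=j=l$). For the remaining generic indices, we use $O(1/N)$ for off-diagonal first derivatives and the inductive bounds for second derivatives.

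\textbf{Case $l=j\neq k$.} Group 1 contributes $O(1/N^2)+O(1/N)$. In group 2, the generic indices contribute $\frac1N\cdot N\cdot O(1/N^2)$. The special index $i=j$ contributes $\frac1N\partial_{22}h\,(\partial_jF_j^m)^2=O(1/N)$. In group 3, the generic indices contribute $\frac1N\cdot N\cdot O(1/N)$, and the index $i=j$ contributes $\frac1N\,O(1)$. The total is $O(1/N)$.

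\textbf{Cases $l=k\neq j$ and $l\neq j=k$.} These are symmetric to each other and are handled the same way, giving $O(1/N)$.

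\textbf{Case $l,j,k$ pairwise distinct.} This is the main obstacle: every contribution must be shown to be $O(1/N^2)$.
\begin{itemize}
\item Group 1 is $O(1/N^2)$ directly.
\item In group 2, a generic $i$ gives at most $O(1/N^2)$ per product, so the sum is $\frac1N\cdot N\cdot O(1/N^2)$. The special index $i=l$ gives $\frac1N\,O(1)\,O(1/N)$, and $i=j$ gives the same. Both are $O(1/N^2)$.
\item In group 3, generic $i$ contribute $\frac1N\cdot N\cdot O(1/N^2)$ by the induction hypothesis in the pairwise distinct case. The special indices $i=j$ or $i=l$ contribute $\frac1N\,O(1/N)$. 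The first term, $\frac1N\sum_{i\neq k}\partial_1h\cdot\partial_l\partial_jF_k^m$, is $O(1/N^2)$ by the induction hypothesis.
\end{itemize}
The careful bookkeeping is to confirm that no term of the form $\frac1N\sum_i O(1/N)$ survives. This holds because every product carrying only a single $1/N$ factor involves a special index, of which there are boundedly many. With that checked, the induction closes.
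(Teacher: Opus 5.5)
Your proposal is correct and follows essentially the same route as the paper: induction on $t$, a direct base-case computation, then differentiating the recursion $F_k^{m+1}=f(F_k^m)+\frac1N\sum_{i\neq k}h(F_k^m,F_i^m)$ twice, and in each case separating the boundedly many special indices $i\in\{j,l\}$, where first derivatives are $O(1)$, from the generic indices. One small bookkeeping point, which the paper shares: the diagonal bound $\partial_j\partial_jF_j^m=O(1)$ that you use for $i=j$ in the case $l=j\neq k$ should be stated explicitly as part of the induction hypothesis, and it follows immediately from the same chain-rule formula.
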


\begin{proof}
We argue by induction on $t$. For $t=1$,
\[
F_k=f(x_k)+\frac1N\sum_{i\neq k}h(x_k,x_i).
\]
Hence, if $l=j\neq k$,
\[
\partial_l\partial_jF_k
=\frac1N\partial_2^2h(x_k,x_j)=O(1/N),
\]
while, if $l=k\neq j$ or $j=k\neq l$,
\[
\partial_l\partial_jF_k
=\frac1N\partial_1\partial_2h(x_k,x_j)=O(1/N).
\]
If $l,j,k$ are pairwise distinct, then
$\partial_l\partial_jF_k=0$. Thus $\eqref{MixedDerivatives}$
holds for $t=1$. Assume that \eqref{MixedDerivatives} holds for $t=m$. Since
$F_k^{m+1}=F_k\circ F^m$, the chain rule gives
\[
\partial_l\partial_jF_k^{m+1}
=f''(F_k^m)\partial_lF_k^m\partial_jF_k^m
+f'(F_k^m)\partial_l\partial_jF_k^m+\frac1N\sum_{i\neq k}
\Big[
\partial_1^2h\,\partial_lF_k^m\partial_jF_k^m
+\partial_1\partial_2h\,\partial_lF_i^m\partial_jF_k^m\]
\[+\partial_1\partial_2h\,\partial_lF_k^m\partial_jF_i^m
+\partial_2^2h\,\partial_lF_i^m\partial_jF_i^m
+\partial_1h\,\partial_l\partial_jF_k^m
+\partial_2h\,\partial_l\partial_jF_i^m
\]
where the derivatives of $h$ are evaluated at
$(F_k^m,F_i^m)$. By Lemma~\ref{partialj},
\[
\partial_rF_q^m=
\begin{cases}
O(1),&r=q,\\
O(1/N),&r\neq q.
\end{cases}
\]
Since $f,h\in C^2$, their derivatives are uniformly bounded.
If $l=j\neq k$, then
$\partial_lF_k^m,\partial_jF_k^m=O(1/N)$ and
$\partial_l\partial_jF_k^m=O(1/N)$. Moreover,
\[
\frac1N\sum_{i\neq k}
\partial_lF_i^m\partial_jF_i^m=O(1/N),
\]
because only the term $i=j$ is $O(1)$ before multiplication by
$1/N$. All other terms are $O(1/N)$ or smaller, and hence
\[
\partial_l\partial_jF_k^{m+1}=O(1/N).
\]
If $j=k\neq l$, then
$\partial_jF_k^m=O(1)$,
$\partial_lF_k^m=O(1/N)$, and
$\partial_l\partial_jF_k^m=O(1/N)$. Since $i\neq k$,
$\partial_jF_i^m=O(1/N)$, and therefore all terms in the sum are
$O(1/N)$ or smaller. Thus
\[
\partial_l\partial_jF_k^{m+1}=O(1/N).
\]
The case $l=k\neq j$ is analogous, except that
$\partial_lF_k^m=O(1)$ and $\partial_jF_k^m=O(1/N)$. The only
potentially $O(1/N)$ contributions in the sum arise from the
indices $i=j$; hence
\[
\partial_l\partial_jF_k^{m+1}=O(1/N).
\]
Finally, suppose that $l,j,k$ are pairwise distinct. Then
\[
\partial_lF_k^m,\partial_jF_k^m=O(1/N),
\qquad
\partial_l\partial_jF_k^m=O(1/N^2).
\]
In each sum, the only possible $O(1)$ first derivatives occur for
$i=l$ or $i=j$, and the corresponding terms are multiplied by
$1/N$ together with another factor $O(1/N)$. Similarly, by the
induction hypothesis,
\[
\frac1N\sum_{i\neq k}
\partial_l\partial_jF_i^m=O(1/N^2),
\]
since the exceptional indices $i=l,j$ contribute $O(1/N)$ before
the prefactor $1/N$. Consequently,
\[
\partial_l\partial_jF_k^{m+1}=O(1/N^2).
\]
This completes the induction.
\end{proof}

\begin{lemma}\label{MCinequility}
For $\mathbf x=(x_1,...,x_N)$ and $\mathbf v=(v_1,...,v_N)$, define
\[
S_{t,k}(\mathbf{x},\mathbf{v})
=
\sum_{j\neq k}\partial_jF_k^t(\mathbf{x})v_j .
\]
Then, there exists a set $\Omega_0$ of probability one such that, for every
$\omega\in\Omega_0$,
\[
\sup_{s\in\mathbb{T}}
\max_{1\leq k\leq N}
\left|
S_{t,k}(x_1,...,x_{k-1},s,x_{k+1},...,x_N,\mathbf{v})
-
\mathbb{E}
\left[
S_{t,k}(\mathbf x,\mathbf{v})\mid x_k=s
\right]
\right|
\longrightarrow 0
\qquad\text{as }N\to\infty .
\]
\end{lemma}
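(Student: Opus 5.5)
We will apply McDiarmid's bounded-differences inequality to the map $(x_j)_{j\neq k}\mapsto S_{t,k}(x_1,\ldots,x_{k-1},s,x_{k+1},\ldots,x_N,\mathbf v)$ with $s$ and $k$ frozen. The remaining coordinates are i.i.d.\ with law $\mu_0$, so the conditional expectation $\mathbb E[S_{t,k}\mid x_k=s]$ is exactly the mean of this function. We then make the bound uniform, first in $s$ through an $\varepsilon$-net and then in $k$ through a union bound, and conclude with Borel--Cantelli, following the scheme of Lemma~\ref{epsilonnet} and the subsequent remark.

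\textbf{Step 1: bounded differences.} Fix $l\neq k$ and change $x_l$ to $x_l'$. By the mean value theorem applied to the lift, the change in $S_{t,k}$ is bounded by
\[
\sup_{\mathbf x}\Bigl|\sum_{j\neq k}\partial_l\partial_jF_k^t(\mathbf x)\,v_j\Bigr|.
\]
We bound this sum using Lemma~\ref{partiall}. The term $j=l$ is $O(1/N)$, since it falls in the case $l=j$, $j\neq k$. The remaining $N-2$ terms, with $j\notin\{k,l\}$, are each $O(1/N^2)$, so together they contribute $O(1/N)$. With $V:=\sup_j|v_j|<\infty$, this gives bounded differences $c_l\le C_tV/N$, where $C_t$ is independent of $N$, $k$, $l$ and $s$. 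McDiarmid's inequality then yields
\[
\mathbb P\bigl(|S_{t,k}(\cdot,s,\cdot)-\mathbb E[S_{t,k}\mid x_k=s]|>\varepsilon\bigr)\le 2e^{-c\varepsilon^2N},
\]
with $c$ depending only on $t$ and $V$.

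\textbf{Step 2: uniformity in $s$.} Both $S_{t,k}$ and its conditional expectation are Lipschitz in $s$ with a constant independent of $N$. Indeed,
\[
\partial_s S_{t,k}=\sum_{j\neq k}\partial_k\partial_jF_k^t\,v_j,
\]
and each summand is $O(1/N)$ by the case $l=k$, $j\neq k$ of Lemma~\ref{partiall}, so the whole sum is $O(1)$. Differentiating the expectation under the integral gives the same bound. We take a net $s_1,\ldots,s_M$ with $M=\lceil\varepsilon^{-1}\rceil$ and take a union over it. Combined with a union over $k\le N$, this gives a failure probability of at most $2NMe^{-c\varepsilon^2N}$ for deviations exceeding $(1+2K)\varepsilon$. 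This bound is summable in $N$, so Borel--Cantelli applies. Intersecting over $\varepsilon=1/l$ produces the probability-one set $\Omega_0$.

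\textbf{Main obstacle.} The delicate point is Step 1. One must check that the $O(\cdot)$ constants of Lemmas~\ref{partialj} and~\ref{partiall} are uniform in $\mathbf x$ over all of $\mathbb T^N$, not merely along the random orbit, because McDiarmid needs a worst-case bound. The proofs of those lemmas use only sup-norms of derivatives of $f$ and $h$, so this uniformity holds. It is also worth noting that the torus setting only affects the mean value step through the lift.
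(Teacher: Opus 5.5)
Your proposal is correct and follows essentially the same route as the paper. Both use McDiarmid's inequality with bounded differences of order $1/N$ from Lemma~\ref{partiall} (the $j=l$ term plus $N-2$ terms of order $N^{-2}$), then a union bound over $k$, Borel--Cantelli, and an $\varepsilon$-net in $s$. Where you go slightly further is in explicitly checking, via the case $l=k$, $j\neq k$ of Lemma~\ref{partiall}, that $S_{t,k}$ and its conditional expectation are Lipschitz in $s$ uniformly in $N$; the paper's appeal to ``the same $\varepsilon$-net argument'' as in Lemma~\ref{epsilonnet} leaves this implicit.
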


\begin{proof}
Fix $s\in\mathbb{T}$. Since $x_k=s$ is fixed, we regard
$S_{t,k}$ as a function of the remaining random variables $(x_j)_{j\neq k}$.
For any $\ell\neq k$, changing only the $\ell$-th coordinate gives, by the
mean value theorem,
\[
\left|
S_{t,k}(x_1,\dots,x_\ell,\dots,x_N,\mathbf v)
-
S_{t,k}(x_1,\dots,x_\ell',\dots,x_N,\mathbf v)
\right| \leq
\sum_{j\neq k}
\sup_{\mathbf{x}\in\mathbb{T}^N}
|\partial_\ell\partial_jF_k^t(\mathbf{x})|
\,|x_\ell-x_\ell'|\,|v_j|.
\]
Since $(v_j)$ is uniformly bounded and Lemma \ref{partiall} gives uniform
bounds on the second derivatives, there exists $C_t>0$, independent of
$N,j,k$, such that
\[
\left|
S_{t,k}(x_1,\dots,x_\ell,\dots,x_N,\mathbf v)
-
S_{t,k}(x_1,\dots,x_\ell',\dots,x_N,\mathbf v)
\right|
\leq \frac{C_t}{N}.
\]
Hence $S_{t,k}$ satisfies the bounded differences condition. The variance
proxy in McDiarmid's inequality satisfies
\[
\sum_{\ell\neq k}\left(\frac{C_t}{N}\right)^2
\leq \frac{C_t^2}{N}.
\]
Therefore, for every $\varepsilon>0$,
\[
\mathbb{P}\left(
\left|
S_{t,k}(x_1,...,x_{k-1},s,x_{k+1},...,x_N,\mathbf v)
-
\mathbb{E}
\left[
S_{t,k}(\mathbf{x},\mathbf v)\mid x_k=s
\right]
\right|>\varepsilon
\right)
\leq 2e^{-c_t\varepsilon^2N},
\]
where $c_t>0$ is independent of $k$ and $s$. Taking the union bound over
$k$ gives
\[
\mathbb{P}\left(
\max_{1\leq k\leq N}
\left|
S_{t,k}(x_1,...,x_{k-1},s,x_{k+1},...,x_N,\mathbf v)
-
\mathbb{E}
\left[
S_{t,k}(\mathbf{x},\mathbf v)\mid x_k=s
\right]
\right|>\varepsilon
\right)
\leq 2Ne^{-c_t\varepsilon^2N}.
\]
The right-hand side is summable in $N$, and therefore the Borel--Cantelli
lemma gives almost sure convergence for every fixed $s$. Finally, the extension from fixed $s$ to uniform convergence in
$s\in\mathbb{T}$ follows by the same $\varepsilon$-net argument used in
Lemma \ref{epsilonnet}. Thus, there exists a single set $\Omega_0$ of
probability one such that, for every $\omega\in\Omega_0$,
\[
\sup_{s\in\mathbb{T}}
\max_{1\leq k\leq N}
\left|
S_{t,k}(\mathbf{x},\mathbf v)
-
\mathbb{E}
\left[
S_{t,k}(\mathbf{x},\mathbf v)\mid x_k=s
\right]
\right|
\longrightarrow0 .
\]
\end{proof}

\begin{lemma}\label{g_t(x_k)}
For every $t\geq1$, there exists a continuous function
$g_t:\mathbb T\to\mathbb R$ such that
\[
\sup_{1\leq k\leq N}\sup_{s\in\mathbb T}
\left|
\mathbb E\!\left[
R_{t,k}\mid x_k=s
\right]-g_t(s)
\right|
\longrightarrow0,
\qquad N\to\infty,
\]
where
$
R_{t,k}:=\sum_{j\ne k}\partial_jF_k^t.
$
For $t=1$ the function $g_t$ is given by
$
g_1(s)
=
\int_{\mathbb T}
\partial_2h(s,y)\,d\mu_0(y),
$
and, for $t\geq2$,
\[
g_t(s)=
f'_{\mu_{t-1}}
\!\left(f_{\mu_0}^{[t-1]}(s)\right)
g_{t-1}(s)
+
\int_{\mathbb T}
\partial_2h
\!\left(
f_{\mu_0}^{[t-1]}(s),
f_{\mu_0}^{[t-1]}(y)
\right)
(f_{\mu_0}^{[t-1]})'(y)
\,d\mu_0(y)\]
\[+\int_{\mathbb T}
\partial_2h
\!\left(
f_{\mu_0}^{[t-1]}(s),
f_{\mu_0}^{[t-1]}(y)
\right)
g_{t-1}(y)
\,d\mu_0(y).
\]
Moreover,
\[
\sup_{1\leq k\leq N}\sup_{s\in\mathbb T}
\left|
\mathbb E\!\left[S_{t,k}
\,\middle|\,x_k=s
\right]
-\bar v\,g_t(s)
\right|
\longrightarrow0.
\]
\end{lemma}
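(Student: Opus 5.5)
We argue by induction on $t$. The induction runs the recursion for $R_{t,k}$ through the chain rule. We pass to conditional expectations using the bounds of Lemmas \ref{partialj} and \ref{partiall}, the almost-sure uniform approximations of Lemmas \ref{mutmut} and \ref{mutmut1}, and the law-of-large-numbers argument of Lemma \ref{epsilonnet}.

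\emph{Base case.} For $t=1$,
\[
R_{1,k}=\frac1N\sum_{j\neq k}\partial_2h(x_k,x_j).
\]
Conditioning on $x_k=s$ and using independence gives
\[
\mathbb E[R_{1,k}\mid x_k=s]=\frac{N-1}{N}\int_{\mathbb T}\partial_2h(s,y)\,d\mu_0(y),
\]
which converges to $g_1(s)$ uniformly in $s$ and $k$. Continuity of $g_1$ follows from that of $\partial_2h$.

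\emph{Inductive step.} Summing the chain-rule formula for $\partial_jF_k^{m+1}$ from Lemma \ref{partialj} over $j\neq k$ gives
\[
R_{m+1,k}=\Big(f'(F_k^m)+\tfrac1N\sum_{i\neq k}\partial_1h(F_k^m,F_i^m)\Big)R_{m,k}
+\frac1N\sum_{i\neq k}\partial_2h(F_k^m,F_i^m)\Big(\partial_iF_i^m+\sum_{j\neq k,i}\partial_jF_i^m\Big).
\]
The last inner sum equals $R_{m,i}-\partial_kF_i^m$, and $\partial_kF_i^m=O(1/N)$, so up to $O(1/N)$ it is $R_{m,i}$.

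We treat the three pieces in turn.
\begin{itemize}
\item The prefactor converges uniformly in $k$ to $f'_{\mu_m}(f^{[m]}_{\mu_0}(x_k))$, by the argument in the proof of Lemma \ref{mutmut1}. The factor $R_{m,k}$ is $O(1)$ and, by Lemma \ref{MCinequility} (with $\mathbf v\equiv1$), concentrates around its conditional mean, which by induction tends to $g_m(x_k)$. The product is therefore asymptotically $f'_{\mu_m}(f^{[m]}_{\mu_0}(s))\,g_m(s)$.
\item In the second piece, $\partial_iF_i^m$ is replaced by $(f^{[m]}_{\mu_0})'(x_i)$ (Lemma \ref{mutmut1}) and $F^m$ by $f^{[m]}_{\mu_0}$ (Lemma \ref{mutmut}). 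The resulting empirical average over the i.i.d.\ $x_i$ converges, by Hoeffding's inequality plus an $\varepsilon$-net, to the first integral in the formula.
\item In the third piece, $R_{m,i}$ is replaced by $g_m(x_i)$ uniformly in $i$. This uses Lemma \ref{MCinequility} together with the induction hypothesis, and yields the second integral.
\end{itemize}

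These are almost-sure statements, whereas the lemma concerns conditional expectations. To pass from one to the other, we note that every quantity is uniformly bounded for fixed $t$ by Lemma \ref{partialj}. All approximation errors are bounded by $\max_k$-type quantities that tend to zero in probability with exponentially small exceptional sets, uniformly over conditioning on $x_k=s$; conditioning on one coordinate leaves the other $N-1$ i.i.d. Dominated convergence therefore lets us take conditional expectations of these errors. Uniformity in $s$ comes from the Lipschitz dependence on $s$, since the second derivatives are bounded. Continuity of $g_{m+1}$ follows from continuity of $g_m$, $f^{[m]}_{\mu_0}$ and the derivatives of $h$.

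For the final claim,
\[
\mathbb E[S_{t,k}\mid x_k=s]=\sum_{j\neq k}v_j\,\mathbb E[\partial_jF_k^t\mid x_k=s].
\]
By exchangeability of $(x_j)_{j\neq k}$ given $x_k$, the quantity $a_N(s):=\mathbb E[\partial_jF_k^t\mid x_k=s]$ does not depend on $j\neq k$. Hence
\[
\mathbb E[S_{t,k}\mid x_k=s]=a_N(s)\sum_{j\neq k}v_j=\Big(\tfrac1{N-1}\sum_{j\neq k}v_j\Big)\,\mathbb E[R_{t,k}\mid x_k=s].
\]
Since $\sup_k|v_k|<\infty$, removing the single index $k$ does not affect the Cesàro limit, so the first factor tends to $\bar v$ uniformly in $k$. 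The second factor tends to $g_t(s)$ by the first part.

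The main obstacle is the inductive step's transfer between pathwise (almost-sure) approximations and conditional expectations. In particular, the third piece needs $R_{m,i}\approx g_m(x_i)$ uniformly in $i$, while we condition on a different coordinate $x_k$. This requires the concentration of Lemma \ref{MCinequility} to hold uniformly after freezing one extra coordinate, which we obtain by repeating the McDiarmid argument with one additional fixed variable.
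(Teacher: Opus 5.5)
Your proposal is correct and follows the paper's proof essentially step by step. The shared ingredients are the chain-rule recursion for $R_{m+1,k}$ with the $O(1/N)$ correction $\partial_kF_i^m$, concentration via Lemma \ref{MCinequility} plus the Hoeffding/$\varepsilon$-net law of large numbers, boundedness and dominated convergence to pass to conditional expectations, equi-Lipschitz control from Lemma \ref{partiall} for uniformity in $s$, and exchangeability with the Ces\`aro mean for $S_{t,k}$. The paper passes silently over the frozen-coordinate issue you flag, and you can settle it without re-running McDiarmid: replacing $x_k$ by $s$ changes $F_i^m$, $\partial_iF_i^m$ and $R_{m,i}$ ($i\neq k$) by only $O(1/N)$ deterministically, by Lemmas \ref{partialj} and \ref{partiall}, so the unconditioned almost-sure approximations transfer directly.
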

\begin{proof}
We prove the result by induction on $t$. For $t=1$,
\[
R_{1,k}
=
\sum_{j\ne k}\partial_jF_k
=
\frac1N\sum_{j\ne k}\partial_2h(x_k,x_j),
\]
and therefore
\[
\mathbb E[R_{1,k}\mid x_k=s]
=
\frac{N-1}{N}
\int_{\mathbb T}
\partial_2h(s,y)\,d\mu_0(y)
\to
\int_{\mathbb T}
\partial_2h(s,y)\,d\mu_0(y)
=:g_1(s).
\]
Assume that, for some $m\geq1$,
\[
|\mathbb E[R_{m,k}\mid x_k=s]-g_m(s)| \to 0
\qquad\text{for every }s\in\mathbb T.
\]
By Lemma~\ref{MCinequility}, applied with $v_j=1$ for $j\ne k$,
\[
|R_{m,k}(x_1,...,x_{k-1},s,x_{k+1},...,x_N)
-
\mathbb E[R_{m,k}\mid x_k=s]|
\to 0.
\]
Hence, by the induction hypothesis,
\begin{equation}{\label{lRmk}}
|R_{m,k}(x_1,...,x_{k-1},s,x_{k+1},...,x_N) - g_m(s)| \to 0.
\end{equation}
\noindent For $t=m+1$, the chain rule gives
\[
R_{m+1,k}
=
\left(
f'(F_k^m)
+\frac1N\sum_{i\ne k}
\partial_1h(F_k^m,F_i^m)
\right)R_{m,k}
+
\frac1N\sum_{i\ne k}
\partial_2h(F_k^m,F_i^m)\,
\partial_iF_i^m\]

\[+\frac1N\sum_{i\ne k}
\partial_2h(F_k^m,F_i^m)R_{m,i}
-
\frac1N\sum_{i\ne k}
\partial_2h(F_k^m,F_i^m)\,
\partial_kF_i^m.
\]
By Lemma~\ref{partialj},
\[
\partial_kF_i^m=O(N^{-1}),
\qquad i\ne k,
\]
and hence the last term is $O(N^{-1})$ uniformly in $k$. By Lemmas~\ref{mutmut} and~\ref{MCinequility}, together with \eqref{lRmk}, $R_{m+1,k}(x_1,...,x_{k-1},s,x_{k+1},...,x_N)$ converges to
\[\left(
f'(f_{\mu_0}^{[m]}(s))
+
\int_{\mathbb T}
\partial_1h
\bigl(
f_{\mu_0}^{[m]}(s),
f_{\mu_0}^{[m]}(y)
\bigr)
\,d\mu_0(y)
\right)g_m(s)
+
\int_{\mathbb T}
\partial_2h
\bigl(
f_{\mu_0}^{[m]}(s),
f_{\mu_0}^{[m]}(y)
\bigr)
(f_{\mu_0}^{[m]})'(y)
\,d\mu_0(y)
\]
\[+\int_{\mathbb T}
\partial_2h
\bigl(
f_{\mu_0}^{[m]}(s),
f_{\mu_0}^{[m]}(y)
\bigr)
g_m(y)
\,d\mu_0(y).
\]
uniformly in k. Moreover, by Lemma~\ref{partialj} ,
\[
\left|
R_{m+1,k}(x_1,\ldots,x_{k-1},s,x_{k+1},\ldots,x_N)
\right|
\leq C
\]
uniformly in $N$ and $k$. By Dominated convergene theorem \[
|\mathbb E[R_{m+1,k}\mid x_k=s]-g_{m+1}(s)| \to 0
\qquad\text{uniformly in k for every }s\in\mathbb T.
\] 
It remains to upgrade the convergence to uniform convergence in $s$. We have
\[
\left|
\frac{d}{ds}
\mathbb{E}[R_{t,k}\mid x_k=s]
\right|
=
\left|
\sum_{j\ne k}
\mathbb{E}\!\left[
\partial_k\partial_j F_k^t
\mid x_k=s
\right]
\right|\leq
\sum_{j\ne k}
\mathbb{E}\!\left[
\left|\partial_k\partial_j F_k^t\right|
\mid x_k=s
\right].
\]
By Lemma \ref{partiall}, there exists a constant $C_0>0$, independent of
$N$, $k$, and $s$, such that
\[
\left|\partial_k\partial_j F_k^t\right|
\leq \frac{C_0}{N},
\qquad j\ne k.
\]
Therefore,
\[
\left|
\frac{d}{ds}
\mathbb{E}[R_{t,k}\mid x_k=s]
\right|
\leq
\sum_{j\ne k}\frac{C_0}{N}
=
\frac{N-1}{N}C_0
\leq C_0.
\]
Hence the family
\[
\left\{
s\mapsto\mathbb{E}[R_{t,k}\mid x_k=s]
\right\}_{N,k}
\]
is equi-Lipschitz in $s$. Since $g_t$ is continuous on the compact space $\mathbb{T}$, it is uniformly
continuous. Together with the pointwise convergence, which is already
uniform in $k$, the equi-Lipschitz property implies uniform convergence
in $s$. Thus
\[
\sup_{1\leq k\leq N}\sup_{s\in\mathbb{T}}
\left|
\mathbb{E}[R_{t,k}\mid x_k=s]-g_t(s)
\right|
\longrightarrow 0.
\]
Using linearity of expectation and the fact that
$\mathbb E[\partial_jF_k^t\mid x_k=s]$
is independent of $j$, we obtain 
\[
\lim_{N\to\infty}
\mathbb E\!\left[
\sum_{j\ne k}\partial_jF_k^t\,v_j
\,\middle|\,
x_k=s
\right]
=
\lim_{N\to\infty}
(N-1)
\mathbb E[\partial_jF_k^t\mid x_k=s]
\left(
\frac1{N-1}
\sum_{j\ne k}v_j
\right)
=
\bar v\,
\lim_{N\to\infty}
\mathbb E\!\left[
\sum_{j\ne k}\partial_jF_k^t
\,\middle|\,
x_k=s
\right].
\]
Since
\[
\sup_{1\leq k\leq N}
\left|
\frac1{N-1}\sum_{j\ne k}v_j-\bar v
\right|
\longrightarrow0,
~~~\text{and}~~~
\sup_{1\leq k\leq N}\sup_{s\in\mathbb T}
\left|
\mathbb E[R_{t,k}\mid x_k=s]-g_t(s)
\right|
\longrightarrow0,
\]
we obtain
\[
\sup_{1\leq k\leq N}\sup_{s\in\mathbb T}
\left|
\mathbb E\!\left[S_{t,k}
\,\middle|\,x_k=s
\right]
-\bar v\,g_t(s)
\right|
\longrightarrow0.
\]
This completes the proof.
\end{proof}

 Recall
\[
H_{1,N}(x_1,\ldots,x_N)
=
\sum_{k=1}^{N}
\partial_k \psi\!\left(
f_{\mu_0}^{[t]}(x_1),\ldots,f_{\mu_0}^{[t]}(x_N)
\right)
(f_{\mu_0}^{[t]})'(x_k)v_k,
\]
and
\[
H_{2,N}(x_1,\ldots,x_N)
=
\bar{v}\sum_{k=1}^{N}
\partial_k \psi\!\left(
f_{\mu_0}^{[t]}(x_1),\ldots,f_{\mu_0}^{[t]}(x_N)
\right)
g_t(x_k).
\]
\begin{lemma}{\label{h1h2}}
\[
|H_{i,N}-\mathbb{E}[H_{i,N}]|
\xrightarrow[N\to\infty]{a.s.} 0,
\qquad i=1,2,
\]
where the expectation is taken with respect to the joint law of
\((x_1,\ldots,x_N)\).
\end{lemma}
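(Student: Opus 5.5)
The plan is to apply McDiarmid's bounded-differences inequality to $H_{1,N}$ and $H_{2,N}$, viewed as functions of the i.i.d.\ variables $x_1,\ldots,x_N$. Then we sum the resulting tail bounds and use Borel--Cantelli, as in Lemma~\ref{epsilonnet} and Lemma~\ref{MCinequility}. Set $y_i:=f_{\mu_0}^{[t]}(x_i)$ and write
\[
H_{1,N}=\sum_{k=1}^N \partial_k\psi(y_1,\ldots,y_N)\,a_k(x_k),\qquad a_k(x_k):=(f_{\mu_0}^{[t]})'(x_k)v_k ,
\]
and analogously $H_{2,N}$ with $a_k(x_k):=\bar v\,g_t(x_k)$. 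The weights $a_k$ are uniformly bounded in $k$ and $N$. For $H_{1,N}$ this uses that $f_{\mu_0}^{[t]}$ is a finite composition of $C^1$ maps $f_{\mu_l}$ (since $f,h\in C^2$) together with $\sup_k|v_k|<\infty$. For $H_{2,N}$ it uses that $g_t$ is continuous on $\mathbb T$ by Lemma~\ref{g_t(x_k)}. Let $A$ denote a common bound for the $a_k$.

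The key step is the bounded-differences estimate. We replace $x_\ell$ by $x_\ell'$, leaving all other coordinates fixed, and split the change in $H_{i,N}$ into two parts. The first part is the term $k=\ell$: both $\partial_\ell\psi$ and $a_\ell$ change. Since $\|\partial_\ell\psi\|_\infty\le C/N$ by Assumption~\ref{ass:observable-scaling}, this term changes by at most $2CA/N$. The second part collects the terms $k\neq\ell$: only the argument $y_\ell$ of $\partial_k\psi$ changes. By the mean value theorem and the mixed-derivative bound $\|\partial_\ell\partial_k\psi\|_\infty\le C/N^2$, each such term changes by at most $(C/N^2)\,A\,|y_\ell-y_\ell'|\le CA/N^2$, where we use that distances on $\mathbb T$ are bounded. (If one prefers to work with lifts, a Lipschitz bound on $f^{[t]}_{\mu_0}$ works equally well.) Summing over the $N-1$ indices $k\neq\ell$ gives another $O(1/N)$. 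Hence $H_{i,N}$ has bounded differences $c_\ell\le C'/N$, with $C'$ depending on $t$ but not on $N$ or $\ell$. Note that the diagonal second-derivative bound $\partial_{kk}\psi$ is not needed here.

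McDiarmid's inequality then gives
\[
\mathbb P\big(|H_{i,N}-\mathbb E H_{i,N}|>\varepsilon\big)\le 2\exp\!\Big(-\frac{2\varepsilon^2}{\sum_\ell c_\ell^2}\Big)\le 2e^{-2\varepsilon^2N/C'^2}.
\]
This is summable in $N$, so Borel--Cantelli yields, for each $\varepsilon=1/l$, a full-measure set on which the deviation is eventually at most $1/l$. Intersecting over $l$ gives almost sure convergence.

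The main subtlety, which I expect to be the only real obstacle, is the measurability and boundedness of $a_k$. In $H_{2,N}$ the function $g_t$ was obtained as a limit, so we need its continuity, hence boundedness, from Lemma~\ref{g_t(x_k)}. For $H_{1,N}$ we need the derivative $(f^{[t]}_{\mu_0})'$ to be bounded uniformly. Both follow from results already established, so the rest is routine.
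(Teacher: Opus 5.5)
Your proposal is correct and follows essentially the same route as the paper. Both arguments bound each coordinate's influence by $O(1/N)$: the $k=\ell$ term via $\|\partial_\ell\psi\|_\infty\le C/N$, and the $k\neq\ell$ terms via the $C/N^2$ mixed-derivative bound summed over $N-1$ indices. Both then conclude with McDiarmid's inequality and Borel--Cantelli. The only minor difference is that you bound the diagonal term crudely by twice the sup. This avoids the paper's triangle-inequality split and its use of the bound $\|\partial_{\ell\ell}\psi\|_\infty\le C/N$.
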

\begin{proof}
We  show that \(H_{1,N}\) (and similarly \(H_{2,N}\)) satisfies the bounded differences property, i.e.,
\[
\bigl|H_i(x_1,\ldots,x_\ell,\ldots,x_N)-H_i(x_1,\ldots,x_\ell',\ldots,x_N)\bigr|
= O\!\left(\frac{1}{N}\right), \quad i=1,2.
\]
The result then follows from McDiarmid’s inequality and the Borel–Cantelli lemma by the same procedure as in Lemma \ref{MCinequility} and Lemma \ref{g_t(x_k)}. 
To verify the bounded-differences condition, fix \(1\leq \ell\leq N\) and replace only the
\(\ell\)-th coordinate \(x_\ell\) by \(x_\ell'\). Then

\[
\Bigl|
H_1(x_1,\ldots,x_\ell,\ldots,x_N)
-
H_1(x_1,\ldots,x_\ell',\ldots,x_N)
\Bigr|
\leq
\Bigl|
\partial_\ell\psi(\mathbf{f}_{\mu_0}^{t}(\mathbf{x}))\,(f_{\mu_0}^{[t]})'(x_\ell)
-
\partial_\ell\psi(\mathbf{f}_{\mu_0}^{t}(\mathbf{x'}))\,(f_{\mu_0}^{[t]})'(x_\ell')
\Bigr||v_l|\]
\[
+
\sum_{k\neq \ell}
\Bigl|
\partial_k\psi(\mathbf{f}_{\mu_0}^{t}(\mathbf{x}))-\partial_k\psi(\mathbf{f}_{\mu_0}^{t}(\mathbf{x'}))
\Bigr|
|(f_{\mu_0}^{[t]})'(x_k)||v_k|,
\]
where
$\mathbf{f}_{\mu_0}^{t}(\mathbf{x})
=
\bigl(
f_{\mu_0}^{[t]}(x_1),\ldots,f_{\mu_0}^{[t]}(x_\ell),\ldots,f_{\mu_0}^{[t]}(x_N)
\bigr)$
~and~
$\mathbf{f}_{\mu_0}^{t}(\mathbf{x'})=
\bigl(
f_{\mu_0}^{[t]}(x_1),\ldots,f_{\mu_0}^{[t]}(x_\ell'),\ldots,f_{\mu_0}^{[t]}(x_N)
\bigr).$
Applying the triangle inequality to the first term yields
\[
\Bigl|
\partial_\ell\psi(\mathbf{f}_{\mu_0}^{t}(\mathbf{x}))\,(f_{\mu_0}^{[t]})'(x_\ell)
-
\partial_\ell\psi(\mathbf{f}_{\mu_0}^{t}(\mathbf{x'}))\,(f_{\mu_0}^{[t]})'(x_\ell')\Big| \]
\[\leq
\Bigl|
\partial_\ell\psi(\mathbf{f}_{\mu_0}^{t}(\mathbf{x}))-\partial_\ell\psi(\mathbf{\mathbf{f}_{\mu_0}^{t}(\mathbf{x'})})
\Bigr|
\,|(f_{\mu_0}^{[t]})'(x_\ell')|
+
|\partial_\ell\psi(\mathbf{f}_{\mu_0}^{t}(\mathbf{x}))|
\,\Bigl|
(f_{\mu_0}^{[t]})'(x_\ell)-(f_{\mu_0}^{[t]})'(x_\ell')
\Bigr|.
\]
By the mean value theorem,

\[
\Bigl|
\partial_\ell\psi(\mathbf{f}_{\mu_0}^{t}(\mathbf{x}))-\partial_\ell\psi(\mathbf{f}_{\mu_0}^{t}(\mathbf{x'}))
\Bigr|
\leq
\|\partial_\ell^2\psi\|_\infty
\,|f_{\mu_0}^{[t]}(x_\ell)-f_{\mu_0}^{[t]}(x_\ell')|,
\]
and therefore
\[
\Bigl|
\partial_\ell\psi(\mathbf{f}_{\mu_0}^{t}(\mathbf{x}))-\partial_\ell\psi(\mathbf{f}_{\mu_0}^{t}(\mathbf{x'}))
\Bigr|
\,| (f_{\mu_0}^{[t]})'(x_\ell')|
=
O\!\left(\frac1N\right).
\]
Moreover,
\[
|\partial_\ell\psi(\mathbf{f}_{\mu_0}^{t}(\mathbf{x}))|
\Bigl|
(f_{\mu_0}^{[t]})'(x_\ell)-(f_{\mu_0}^{[t]})'(x_\ell')
\Bigr|
=
O\!\left(\frac1N\right).
\]
For \(k\neq \ell\), another application of the mean value theorem gives
\[
\Bigl|
\partial_k\psi(\mathbf{f}_{\mu_0}^{t}(\mathbf{x}))-\partial_k\psi(\mathbf{f}_{\mu_0}^{t}(\mathbf{x'}))
\Bigr|
\leq
\|\partial_\ell \partial_k\psi\|_\infty
\,|f_{\mu_0}^{[t]}(x_\ell)-f_{\mu_0}^{[t]}(x_\ell')|
=
O\!\left(\frac1{N^2}\right).
\]
Hence
\[
\sum_{k\neq \ell}
\Bigl|
\partial_k\psi(F)-\partial_k\psi(F')
\Bigr|
\,| (f_{\mu_0}^{[t]})'(x_k)|
\leq
N\,O\!\left(\frac1{N^2}\right)
=
O\!\left(\frac1N\right).
\]
Combining the estimates, we obtain
\[
\Bigl|
H_1(x_1,\ldots,x_\ell,\ldots,x_N)
-
H_1(x_1,\ldots,x_\ell',\ldots,x_N)
\Bigr|
=
O\!\left(\frac1N\right).
\]
Therefore \(H_1\) satisfies the bounded-differences condition.
The proof for
\[
H_2(x_1,\ldots,x_N)
=
\sum_{k=1}^{N}
\partial_k \psi\!\left(
f_{\mu_0}^{[t]}(x_1),\ldots,f_{\mu_0}^{[t]}(x_N)
\right)
g_t(x_k)
\]
is identical. Replacing \(({f_{\mu_0}^{[t]}})'(x_k)\) by \(g_t(x_k)\)\footnote{
For each fixed $t$, $g_t$ is bounded and Lipschitz. This follows
inductively from its recursive definition, since $f,h\in C^2$ and all
terms defining $g_t$ are compositions and integrals of $C^1$ functions.
}  in the above argument yields

\[
\Bigl|
H_2(x_1,\ldots,x_\ell,\ldots,x_N)
-
H_2(x_1,\ldots,x_\ell',\ldots,x_N)
\Bigr|
=
O\!\left(\frac1N\right).
\]
\end{proof}

\begin{lemma}\label{exph1h2}
Let
\[
\psi_{k,t}(s)
=
\int_{\mathbb{T}^{N-1}}
\psi(x_1,\ldots,x_{k-1},s,x_{k+1},\ldots,x_N)
\prod_{j\neq k}d\mu_t(x_j),
\]
where
$
\mu_t=(f_{\mu_0}^{[t]})_\ast\mu_0,$
~and~$\overline{v_N}=\frac1N\sum_{k=1}^N v_k .
$
Then
\[
\mathbb{E}[H_1+H_2]
=
-N\int_{\mathbb T}
\psi_{k,t}(s)
\left[
(f_{\mu_0}^{[t]})_\ast
\left(
\rho_0(s)(f_{\mu_0}^{[t]})'(s)\overline{v_N}
+
{g_t(s)\rho_0(s)}\bar v
\right)
\right]'ds .
\]
\end{lemma}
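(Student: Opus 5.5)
The plan is to compute the two expectations separately, reduce each to a one-dimensional integral using exchangeability and the product structure of $\mathbb P_N=\mu_0^{\otimes N}$, and then rewrite the result as a pairing of $\psi_{k,t}$ with the derivative of a push-forward via a change of variables and an integration by parts on $\mathbb T$.

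\emph{Step 1 (conditioning on one coordinate).} Write $y_j=f_{\mu_0}^{[t]}(x_j)$. Since the $x_j$ are i.i.d.\ with law $\mu_0$, the $y_j$ are i.i.d.\ with law $\mu_t=(f_{\mu_0}^{[t]})_\ast\mu_0$. For each $k$ I would use Fubini to integrate first over $(x_j)_{j\neq k}$ with $x_k=s$ fixed. Because $\psi\in C^2$ with bounded derivatives (Assumptions~\ref{ass:observable-symmetry}--\ref{ass:observable-scaling}), differentiation in the $k$-th slot commutes with this integral. This gives
\[
\mathbb E\bigl[\partial_k\psi(y_1,\ldots,y_N)\,\big|\,x_k=s\bigr]=\psi_{k,t}'\bigl(f_{\mu_0}^{[t]}(s)\bigr).
\]
By permutation invariance, $\psi_{k,t}$ does not depend on $k$. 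Hence
\[
\mathbb E[H_{1,N}]=\Bigl(\sum_{k=1}^N v_k\Bigr)\int_{\mathbb T}\psi_{k,t}'\bigl(f_{\mu_0}^{[t]}(s)\bigr)(f_{\mu_0}^{[t]})'(s)\rho_0(s)\,ds,
\]
where $\sum_{k=1}^N v_k=N\overline{v_N}$. Likewise
\[
\mathbb E[H_{2,N}]=N\bar v\int_{\mathbb T}\psi_{k,t}'\bigl(f_{\mu_0}^{[t]}(s)\bigr)g_t(s)\rho_0(s)\,ds,
\]
using that $g_t$ is bounded and continuous (Lemma~\ref{g_t(x_k)}).

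\emph{Step 2 (transfer to the image variable).} Set $\phi(s)=\rho_0(s)(f_{\mu_0}^{[t]})'(s)\overline{v_N}+g_t(s)\rho_0(s)\bar v$. Then
\[
\mathbb E[H_{1,N}+H_{2,N}]=N\int_{\mathbb T}\psi_{k,t}'\bigl(f_{\mu_0}^{[t]}(s)\bigr)\phi(s)\,ds .
\]
By the definition of push-forward applied to the signed measure $\phi\,ds$, this equals $N\int\psi_{k,t}'(y)\,d\bigl[(f_{\mu_0}^{[t]})_\ast(\phi\,ds)\bigr](y)$. When $f_{\mu_0}^{[t]}$ is a $C^2$ covering map of $\mathbb T$ with nonvanishing derivative, this measure has density
\[
(f_{\mu_0}^{[t]})_\ast\phi(y)=\sum_{z:\,f_{\mu_0}^{[t]}(z)=y}\frac{\phi(z)}{|(f_{\mu_0}^{[t]})'(z)|}.
\]
This density is $C^1$ because $\rho_0\in C^1$, $(f_{\mu_0}^{[t]})'\in C^1$ and $g_t$ is Lipschitz.

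\emph{Step 3 (integration by parts).} On the torus there are no boundary terms, so
\[
\int_{\mathbb T}\psi_{k,t}'\,(f_{\mu_0}^{[t]})_\ast\phi\,dy=-\int_{\mathbb T}\psi_{k,t}\,\bigl[(f_{\mu_0}^{[t]})_\ast\phi\bigr]'\,dy .
\]
Multiplying by $N$ and renaming the integration variable $s$ gives the claimed formula.

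The main obstacle is the regularity in Steps 2--3. To differentiate the push-forward density classically, $f_{\mu_0}^{[t]}$ must be a local diffeomorphism, and for general $C^2$ data $f,h$ its derivative could vanish. I would first handle the covering case as above, which holds for instance under expansion. In general, I would read $[\,\cdot\,]'$ as a distributional derivative, in which case Step 3 is just the definition of that derivative tested against the $C^2$ function $\psi_{k,t}$. A further minor point is that $g_t$ is only Lipschitz, so $\phi\in W^{1,\infty}$; integration by parts still applies in that class.
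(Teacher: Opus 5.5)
Your proof is correct and follows the paper's argument: exchangeability to reduce to a single coordinate, a change of variables to the push-forward under $f_{\mu_0}^{[t]}$, and integration by parts on $\mathbb{T}$. The only difference is that you integrate out the other $N-1$ coordinates first, forming $\psi_{k,t}$ and moving $\partial_k$ under the integral, whereas the paper does the change of variables and integration by parts inside the $N$-fold integral and applies Fubini at the end; your caveat that the classical derivative $[\cdot]'$ needs $f_{\mu_0}^{[t]}$ to be a local diffeomorphism (or a distributional reading) makes explicit an assumption the paper leaves implicit.
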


\begin{proof}
Since \(\psi\) is permutation invariant and the coordinates have
identical laws, the corresponding unweighted terms have the same
expectation. Therefore,
\[
\mathbb{E}[H_1+H_2]
=
N \int \partial_k \psi\!\left(
f_{\mu_0}^{[t]}(x_1),\ldots,f_{\mu_0}^{[t]}(x_N)
\right)
\left((f_{\mu_0}^{[t]})' (x_k)\overline{v_N}+g_t(x_k)\bar{v}\right)
\prod_{j=1}^N d\mu_0(x_j).\]
Writing \(\rho_0\) for the density of \(\mu_0\), we obtain
\[
\mathbb{E}[H_1+H_2]=
N \int \partial_{k} \psi\left(
f_{\mu_0}^{[t]}(x_1),\ldots,f_{\mu_0}^{[t]}(x_N)
\right)
\left(\rho_0(x_k)(f_{\mu_0}^{[t]})' (x_k)\overline{v_N}+\rho_0(x_k)g_t(x_k)\bar{v}\right)
dx_k
\prod_{j\neq k} d\mu_0(x_j).
\]
By the change of variable formula and recognising the push forward measure \(\mu_t = (f_{\mu_0}^{[t]})_\ast \mu_0\), this becomes
\[
\mathbb{E}[H_1+H_2]=
N \int \partial_{k} \psi\left(
x_1,\ldots,s,\dots,x_N
\right)
\left[
(f_{\mu_0}^{[t]})_\ast
\left(
\rho_0(s)(f_{\mu_0}^{[t]})'(s)\overline{v_N}
+
{g_t(s)\rho_0(s)}\bar v
\right)
\right]
ds
\prod_{j\neq k} d\mu_t(x_j).
\]
By integration by parts,
\[
\mathbb{E}[H_1+H_2]=
-N \int  \psi\left(
x_1,\ldots,s,\ldots,x_N
\right)
\left[
(f_{\mu_0}^{[t]})_\ast
\left(
\rho_0(s)(f_{\mu_0}^{[t]})'(s)\overline{v_N}
+
{g_t(s)\rho_0(s)}\bar v
\right)
\right]'
ds
\prod_{j\neq k} d\mu_t(x_j).
\]

\[
\mathbb{E}[H_1+H_2]
=
-N
\int_{\mathbb{T}}
\left(
\int_{\mathbb{T}^{N-1}}
\psi(x_1,\ldots,s,\ldots,x_N)
\prod_{j\neq k} d\mu_t(x_j)
\right)
\left[
(f_{\mu_0}^{[t]})_\ast
\left(
\rho_0(s)(f_{\mu_0}^{[t]})'(s)\overline{v_N}
+
{g_t(s)\rho_0(s)}\bar v
\right)
\right]'
ds.
\]
\end{proof}

\noindent For the next lemma, we use the following convention throughout. Let $\nu$ be a measure with density $\rho_0'$, i.e. $d\nu=\rho_0'(s)\,ds.$ We denote by
\[
h_m
=
D\mathcal{F}_{\mu_{m-1}}\circ\cdots\circ D\mathcal{F}_{\mu_0}(\nu),
\]
the measure obtained after applying the sequence of differential operators to $\nu$. When we write
\[
h_m(s)=
\left(
D\mathcal{F}_{\mu_{m-1}}\circ\cdots\circ D\mathcal{F}_{\mu_0}(\rho_0')
\right)(s),
\]
we mean the density notation corresponding to the measure $h_m$. Moreover, for a map $f$ (which may represent $f_\mu$, $f_{\mu_0}^{[m]}$, or any other iterate), we use $f_*$ to denote the push-forward operator. When applied to a measure, $f_*\mu$ denotes the push-forward measure. When applied to a density $q$, the notation $f_*q$ denotes the density of the push-forward of the measure with density $q$, i.e.
\[
f_*q \quad\text{denotes the density of}\quad f_*(q(s)\,ds).
\]
\begin{lemma}\label{relationtodifferential}
For any $C^\infty$ function $\varphi:\mathbb{T}^1\to\mathbb{R}$,
\[
\int
\varphi(s)
\left(
D\mathcal{F}_{\mu_{t-1}}\circ\cdots\circ D\mathcal{F}_{\mu_0}
(\rho_0')
\right)(s)\,ds
=
\int
\varphi(s)
\left[
(f_{\mu_0}^{[t]})_\ast
\left(
\rho_0(s)(f_{\mu_0}^{[t]})'(s)
+
g_t(s)\rho_0(s)
\right)
\right]'ds .
\]
\end{lemma}

\begin{proof}
We prove the statement by induction on $t$. For the base case $t=1$, we first establish the desired identity. Define
\[
h_1
:=
D\mathcal{F}_{\mu_0}(\nu)
=
\lim_{\varepsilon\to 0}
\frac{
(f_{\mu_0+\varepsilon\nu})_*(\mu_0+\varepsilon\nu)
-
(f_{\mu_0})_*\mu_0
}{\varepsilon},
\]
where $\nu$ is the measure with density $\rho_0'$. Hence,
\[
h_1
=
\lim_{\varepsilon\to 0}
\frac{
(f_{\mu_0+\varepsilon\nu})_*\mu_0
-
(f_{\mu_0})_*\mu_0
}{\varepsilon}
+
(f_{\mu_0})_*\nu.
\]
Testing against $\varphi$, we obtain
\[
\int \varphi(s)
D\mathcal{F}_{\mu_0}(\rho_0')(s)\,ds
=
\lim_{\varepsilon\to 0}
\frac{1}{\varepsilon}
\int
\Big(
(\varphi\circ f_{\mu_0+\varepsilon\nu})(s)
-
(\varphi\circ f_{\mu_0})(s)
\Big)
\rho_0(s)\,ds
+
\int
(\varphi\circ f_{\mu_0})(s)\rho_0'(s)\,ds.
\]
Moreover,
\[
\frac{
(\varphi\circ f_{\mu_0+\varepsilon\nu})(s)
-
(\varphi\circ f_{\mu_0})(s)
}{\varepsilon}
\longrightarrow
(\varphi'\circ f_{\mu_0})(s)
\int h(s,y)\rho_0'(y)\,dy.
\]
Therefore,
\begin{equation}\label{t=1}
\int
\varphi(s)
D\mathcal{F}_{\mu_0}(\rho_0')(s)\,ds
=
\int
(\varphi\circ f_{\mu_0})(s)\rho_0'(s)\,ds
+
\int
(\varphi'\circ f_{\mu_0})(s)
\left(
\int h(s,y)\rho_0'(y)\,dy
\right)
\rho_0(s)\,ds.
\end{equation}
Using integration by parts in the first term, and integration by
parts with respect to $y$ in the second term, we obtain
\[
\int
\varphi(s)
D\mathcal{F}_{\mu_0}(\rho_0')(s)\,ds
=
-\int
(\varphi\circ f_{\mu_0})'(s)\rho_0(s)\,ds
-
\int
(\varphi'\circ f_{\mu_0})(s)
\left(
\int
\partial_2h(s,y)\rho_0(y)\,dy
\right)
\rho_0(s)\,ds.
\]
Hence,
\[
\int
\varphi(s)
D\mathcal{F}_{\mu_0}(\rho_0')(s)\,ds
=
-\int
(\varphi'\circ f_{\mu_0})(s)
(f_{\mu_0})'(s)\rho_0(s)\,ds
-
\int
(\varphi'\circ f_{\mu_0})(s)
\left(
\int
\partial_2h(s,y)\rho_0(y)\,dy
\right)
\rho_0(s)\,ds.
\]
Using the definition of the push-forward, this becomes
\[
\begin{aligned}
\int
\varphi(s)
D\mathcal{F}_{\mu_0}(\rho_0')(s)\,ds
={}&
-\int
\varphi'(s)
\left[
(f_{\mu_0})_*
\left(
\rho_0(s)(f_{\mu_0})'(s)
\right)
\right]ds
\\
&-
\int
\varphi'(s)
\left[
(f_{\mu_0})_*
\left(
\left(
\int
\partial_2h(s,y)\rho_0(y)\,dy
\right)
\rho_0(s)
\right)
\right]ds.
\end{aligned}
\]
Applying integration by parts once again, we obtain
\[
\begin{aligned}
\int
\varphi(s)
D\mathcal{F}_{\mu_0}(\rho_0')(s)\,ds
={}&
\int
\varphi(s)
\left[
(f_{\mu_0})_*
\left(
\rho_0(s)(f_{\mu_0})'(s)
\right)
\right]'ds
\\
&+
\int
\varphi(s)
\left[
(f_{\mu_0})_*
\left(
\left(
\int
\partial_2h(s,y)\rho_0(y)\,dy
\right)
\rho_0(s)
\right)
\right]'ds.
\end{aligned}
\]
This proves the statement for $t=1$.
Now assume that the statement holds for some $m\geq 1$. We prove that
it also holds for $m+1$. Recall that $h_m$ denotes the measure, while
$h_m(s)$ denotes its density. By the induction hypothesis,
\[
h_m(s)
=
\left[
(f_{\mu_0}^{[m]})_\ast
\left(
\rho_0(s)(f_{\mu_0}^{[m]})'(s)
+
g_m(s)\rho_0(s)
\right)
\right]'.
\]
The computation leading to \eqref{t=1} yields the following one-step
formula: if $\eta$ is a measure with density $q$, then
\[
\int
\varphi(s)
D\mathcal{F}_{\mu}(\eta)(s)\,ds
=
\int
(\varphi\circ f_{\mu})(s)q(s)\,ds
+
\int
(\varphi'\circ f_{\mu})(s)
\left(
\int h(s,y)q(y)\,dy
\right)
d\mu(s).
\]
Applying this identity with $\mu=\mu_m$ and $\eta=h_m$, we obtain
\begin{equation}\label{indindiff}
\begin{aligned}
\int
\varphi(s)
D\mathcal{F}_{\mu_m}(h_m)(s)\,ds
={}&
\int
(\varphi\circ f_{\mu_m})(s)h_m(s)\,ds
\\
&+
\int
(\varphi'\circ f_{\mu_m})(s)
\left(
\int h(s,y)h_m(y)\,dy
\right)
d\mu_m(s).
\end{aligned}
\end{equation}

Substituting the expression for $h_m$ given by the induction hypothesis
into \eqref{indindiff}, we obtain
\[
\begin{aligned}
\int
\varphi(s)
D\mathcal{F}_{\mu_m}(h_m)(s)\,ds
={}&
\int
(\varphi\circ f_{\mu_m})(s)
\left[
(f_{\mu_0}^{[m]})_\ast
\left(
\rho_0(s)(f_{\mu_0}^{[m]})'(s)
\right)
\right]'ds
\\
&+
\int
(\varphi\circ f_{\mu_m})(s)
\left[
(f_{\mu_0}^{[m]})_\ast
\left(
g_m(s)\rho_0(s)
\right)
\right]'ds
\\
&+
\int
(\varphi'\circ f_{\mu_m})(s)
\left(
\int
h(s,y)
\left[
(f_{\mu_0}^{[m]})_\ast
\left(
\rho_0(y)(f_{\mu_0}^{[m]})'(y)
\right)
\right]'dy
\right)
d\mu_m(s)
\\
&+
\int
(\varphi'\circ f_{\mu_m})(s)
\left(
\int
h(s,y)
\left[
(f_{\mu_0}^{[m]})_\ast
\left(
g_m(y)\rho_0(y)
\right)
\right]'dy
\right)
d\mu_m(s).
\end{aligned}
\]
Using integration by parts, we get
\[
\begin{aligned}
\int
\varphi(s)
D\mathcal{F}_{\mu_m}(h_m)(s)\,ds
={}&
-\int
(\varphi\circ f_{\mu_m})'(s)
\left[
(f_{\mu_0}^{[m]})_\ast
\left(
\rho_0(s)(f_{\mu_0}^{[m]})'(s)
\right)
\right]ds
\\
&-
\int
(\varphi\circ f_{\mu_m})'(s)
\left[
(f_{\mu_0}^{[m]})_\ast
\left(
g_m(s)\rho_0(s)
\right)
\right]ds
\\
&-
\int
(\varphi'\circ f_{\mu_m})(s)
\left(
\int
\partial_2h(s,y)
\left[
(f_{\mu_0}^{[m]})_\ast
\left(
\rho_0(y)(f_{\mu_0}^{[m]})'(y)
\right)
\right]dy
\right)
d\mu_m(s)
\\
&-
\int
(\varphi'\circ f_{\mu_m})(s)
\left(
\int
\partial_2h(s,y)
\left[
(f_{\mu_0}^{[m]})_\ast
\left(
g_m(y)\rho_0(y)
\right)
\right]dy
\right)
d\mu_m(s).
\end{aligned}
\]
Since
$
\mu_m=(f_{\mu_0}^{[m]})_\ast\mu_0,
$
using the change-of-variables formula and the chain rule, the preceding
equation becomes
\[
\begin{aligned}
\int
\varphi(s)
D\mathcal{F}_{\mu_m}(h_m)(s)\,ds
={}&-\int
\varphi'(s)
\left[
(f_{\mu_0}^{[m+1]})_\ast
\left(
\rho_0(s)(f_{\mu_0}^{[m+1]})'(s)
\right)
\right]ds
\\
&-
\int
\varphi'(s)
\left[
(f_{\mu_0}^{[m+1]})_\ast
\left(
f'_{\mu_m}(f_{\mu_0}^{[m]}(s))
g_m(s)\rho_0(s)
\right)
\right]ds
\\
&-
\int
(\varphi'\circ f_{\mu_0}^{[m+1]})(s)
\left(
\int
\partial_2h
\bigl(
f_{\mu_0}^{[m]}(s),
f_{\mu_0}^{[m]}(y)
\bigr)
(f_{\mu_0}^{[m]})'(y)\rho_0(y)
\,dy
\right)
\rho_0(s)\,ds
\\
&-
\int
(\varphi'\circ f_{\mu_0}^{[m+1]})(s)
\left(
\int
\partial_2h
\bigl(
f_{\mu_0}^{[m]}(s),
f_{\mu_0}^{[m]}(y)
\bigr)
g_m(y)\rho_0(y)
\,dy
\right)
\rho_0(s)\,ds.
\end{aligned}
\]
Using the definition of the push-forward, we get
\[
\begin{aligned}
\int
\varphi(s)
D\mathcal{F}_{\mu_m}(h_m)(s)\,ds
={}&-\int
\varphi'(s)
\left[
(f_{\mu_0}^{[m+1]})_\ast
\left(
\rho_0(s)(f_{\mu_0}^{[m+1]})'(s)
\right)
\right]ds
\\
&-
\int
\varphi'(s)
\left[
(f_{\mu_0}^{[m+1]})_\ast
\left(
f'_{\mu_m}(f_{\mu_0}^{[m]}(s))
g_m(s)\rho_0(s)
\right)
\right]ds
\\
&-
\int
\varphi'(s)
\left[
(f_{\mu_0}^{[m+1]})_\ast
\left(
\rho_0(s)
\int
\partial_2h
\bigl(
f_{\mu_0}^{[m]}(s),
f_{\mu_0}^{[m]}(y)
\bigr)
(f_{\mu_0}^{[m]})'(y)\rho_0(y)
\,dy
\right)
\right]ds
\\
&-
\int
\varphi'(s)
\left[
(f_{\mu_0}^{[m+1]})_\ast
\left(
\rho_0(s)
\int
\partial_2h
\bigl(
f_{\mu_0}^{[m]}(s),
f_{\mu_0}^{[m]}(y)
\bigr)
g_m(y)\rho_0(y)
\,dy
\right)
\right]ds.
\end{aligned}
\]
By the definition of $g_{m+1}$ from Lemma \ref{g_t(x_k)},
\[
g_{m+1}(s)
=
f'_{\mu_m}(f_{\mu_0}^{[m]}(s))g_m(s)
+
\int
\partial_2h
\bigl(
f_{\mu_0}^{[m]}(s),
f_{\mu_0}^{[m]}(y)
\bigr)
(f_{\mu_0}^{[m]})'(y)\rho_0(y)\,dy
+
\int
\partial_2h
\bigl(
f_{\mu_0}^{[m]}(s),
f_{\mu_0}^{[m]}(y)
\bigr)
g_m(y)\rho_0(y)\,dy.
\]
Consequently,
\[
\begin{aligned}
\int
\varphi(s)
D\mathcal{F}_{\mu_m}(h_m)(s)\,ds
=
-\int
\varphi'(s)
\left[
(f_{\mu_0}^{[m+1]})_\ast
\left(
\rho_0(s)(f_{\mu_0}^{[m+1]})'(s)
+
g_{m+1}(s)\rho_0(s)
\right)
\right]ds.
\end{aligned}
\]
Finally, applying integration by parts once more, we obtain
\[
\begin{aligned}
\int
\varphi(s)
D\mathcal{F}_{\mu_m}(h_m)(s)\,ds
=
\int
\varphi(s)
\left[
(f_{\mu_0}^{[m+1]})_\ast
\left(
\rho_0(s)(f_{\mu_0}^{[m+1]})'(s)
+
g_{m+1}(s)\rho_0(s)
\right)
\right]'ds.
\end{aligned}
\]
Therefore, the statement holds for $t=m+1$. By induction, the result
follows for every $t\geq 1$.
\end{proof}

\begin{proof}[\textbf{Proof of Theorem \ref{Maintheorem1}}:]
Having established the necessary lemmas, we return to the problem of determining $$\frac{1}{t}\log\lim_{N\to\infty} \big|(G_{1, N}(\mathbf{x})+G_{2, N}(\mathbf{x}))\big|$$
First, we will prove that 
\begin{equation}{\label{proofmaingoal}}
\lim_{N\to\infty} \Big|(G_{1,N}(\mathbf{x)}+G_{2, N}(\mathbf{x}))-(H_{1,N}(\mathbf{x})+H_{2,N}(\mathbf{x}))\Big|=0
\end{equation}
$$\lim_{N\to\infty} \big|(G_{1,N}(\mathbf{x})+G_{2, N}(\mathbf{x}))-(H_{1,N}(\mathbf{x})+H_{2,N}(\mathbf{x}))\big| \leq \lim_{N\to\infty} \big|(G_{1,N}(\mathbf{x}) - H_{1, N}(\mathbf{x}))|+|(G_{2,N}(\mathbf{x})-H_{2,N}(\mathbf{x}))\big|$$
Let $ F^{t}(\mathbf{x})
=
\bigl(F_1^{t}(\mathbf{x}),\ldots,F_N^{t}(\mathbf{x})\bigr)$
and
$\mathbf{f}_{\mu_0}^{t}(\mathbf{x})
=
\bigl(f_{\mu_0}^{[t]}(x_1),\ldots,f_{\mu_0}^{[t]}(x_N)\bigr).
$
$$
\big|(G_{1,N}(\mathbf{x})-H_{1,N}(\mathbf{x}))\big|\leq \sum_{k=1}^N
\big|\partial_k \psi(F^t(\mathbf{x}))\partial_{k}F_k^t(\mathbf{x})v_k
-\partial_k \psi(\mathbf{f}_{\mu_0}^{t}(\mathbf{x}))(f_{\mu_0}^{[t]})'(x_k)v_k
\big|$$
By triangular inequity
$$|(G_{1,N}(\mathbf{x})-H_{1,N}(\mathbf{x}))|\leq \sum_{k=1}^N
\big|
\partial_k \psi(F^t(\mathbf{x})
-
\partial_k \psi(\mathbf{f}_{\mu_0}^{t}(\mathbf{x})
\big||\partial_{k}F_k^t(\mathbf{x})||v_k| +
\sum_{k=1}^N
\big|
\partial_{k}F_k^t(\mathbf{x})-f{'}_\mu^{[t]}(x_k)
\big||\partial_k \psi(\mathbf{f}_{\mu_0}^{t}(\mathbf{x})||v_k|.
$$
\noindent By the mean value theorem, we have
\[
\left|
\partial_k \psi(F^t(\mathbf{x})
-
\partial_k \psi(\mathbf{f}_{\mu_0}^{t}(\mathbf{x})
\right|
\le
\|\partial_k^2\|_{\infty}|F_k^t(\mathbf{x})-f_{\mu_0}^{[t]}(x_l)| +
\sum_{l \neq k}^N \|\partial_l\partial_k \psi\|_{\infty}|F_l^t(\mathbf{x})-f_{\mu_0}^{[t]}(x_l)|.
\]
Using $\|\partial_l \partial_k \psi\|_{\infty} = O(1/N^2)$ for $l \neq k$, and  $\|\partial_k^2 \psi \|_{\infty}= O(1/N)$,
\begin{equation}{\label{pmainproof1}}
|(G_{1,N}(\mathbf{x})-H_{1,N}(\mathbf{x}))| \leq \frac{C}{N}
\left(
\sum_{k=1}^N |F_k^t(\mathbf{x})-f_{\mu_0}^{[t]}(x_k)|
+
\sum_{k=1}^N |\partial_{k}F_k^t(\mathbf{x})-(f_{\mu_0}^{[t]})'(x_k)|
\right),
\end{equation}
which converges to $0$ as $N \to \infty$ by Lemma \ref{mutmut} and \ref{mutmut1}. 
Also
\begin{equation}{\label{pmainproof2}}
|(G_{2,N}(\mathbf{x})-H_{2,N}(\mathbf{x}))| \leq \frac{C}{N}
\left(
\sum_{k=1}^N |F_k^t(\mathbf{x})-f_{\mu_0}^{[t]}(x_k)|
+
\sum_{k=1}^N \Big|\sum_{j \neq k}\partial_{j}F_k^t(\mathbf{x})v_j-\bar{v}g_t(x_k)\Big|
\right),
\end{equation}
 converges to $0$ as $N \to \infty$ by Lemma \ref{MCinequility} and \ref{g_t(x_k)}.
 Combining \eqref{pmainproof1} and \eqref{pmainproof2}, we obtain \eqref{proofmaingoal}.
From Lemma \ref{h1h2}
\begin{equation}{\label{proofmaingoal2}}
\lim_{N\to\infty} \big|(H_{1,N}(\mathbf{x})+H_{2, N}(\mathbf{x}))-\mathbb{E}(H_{1,N}(\mathbf{x})+H_{2,N}(\mathbf{x}))\big|=0
\end{equation}
From \eqref{proofmaingoal} and \eqref{proofmaingoal2}
\begin{equation}{\label{proofmaingoa3}}
\lim_{N\to\infty} \big|(G_{1,N}(\mathbf{x})+G_{2, N}(\mathbf{x}))-\mathbb{E}(H_{1,N}(\mathbf{x})+H_{2,N}(\mathbf{x}))\big|=0
\end{equation}
Using Lemma \ref{exph1h2}
\[
\lim_{N\to\infty}\mathbb{E}[H_{1,N}(\mathbf{x})+H_{2,N}(\mathbf{x})]
=
- \lim_{N\to\infty}N \int \psi_{k,t}(s)
\left[
(f_{\mu_0}^{[t]})_\ast
\left(
\rho_0(s)(f_{\mu_0}^{[t]})'(s)\overline{v_N}
+
g_t(s)\rho_0(s)\bar{v}
\right)
\right]'.
\].

wherea
\[
\psi_{k,t}(s)
=
\int
\psi(x_1,\ldots,x_{k-1},\, s,\, x_{k+1},\ldots,x_N)
\prod_{j\ne k} d\mu_t(x_j).
\]
By noticing that the integrand contains the push-forward of a density with zero average, we can write
\[
\lim_{N\to\infty}\mathbb{E}[H_{1,N}(\mathbf{x})+H_{2,N}(\mathbf{x})]
=
- \bar{v}\lim_{N\to\infty}N \int ( \psi_{k,t}(s)-\mathbb{E}_{\mu_t}({\psi_{k,t}(s)}))
\left[
(f_{\mu_0}^{[t]})_\ast
\left(
\rho_0(s)(f_{\mu_0}^{[t]})'(s)
+
g_t(s)\rho_0(s)
\right)
\right]'ds.
\]
So,
\[\lim_{N\to\infty}\mathbb{E}_{\mu_t}[H_{1,N}(\mathbf{x})+H_{2,N}(\mathbf{x})]
=
- \bar{v}\lim_{N\to\infty}N \int ( \psi_{k,t}(s)-\mathbb{E}_{\mu_t}({\psi_{k,t}(s)}))
\left[
(f_{\mu_0}^{[t]})_\ast
\left(
\rho_0(s)(f_{\mu_0}^{[t]})'(s)
+
g_t(s)\rho_0(s)
\right)
\right]'ds.
\]
From Lemma \ref{relationtodifferential}
\[\lim_{N\to\infty}\mathbb{E}[H_{1,N}(\mathbf{x})+H_{2,N}(\mathbf{x})]
=
- \bar{v}\lim_{N\to\infty}N \int ( \psi_{k,t}(s)-\mathbb{E}_{\mu_t}({\psi_{k,t}(s)}))
D \mathcal{F}_{\mu_{t-1}} \circ \cdots \circ D \mathcal{F}_{\mu_0} (\rho_0')(s)ds.
\]
From \eqref{proofmaingoa3} 
$$\lim_{N\to\infty}|(G_{1,N}(\mathbf{x})+G_{2,N}(\mathbf{x})|=  | \bar{v}| \lim_{N\to\infty}N \Bigg|\int ( \psi_{k,t}(s)-\mathbb{E}_{\mu_t}({\psi_{k,t}(s)}))
D \mathcal{F}_{\mu_{t-1}} \circ \cdots \circ D \mathcal{F}_{\mu_0} (\rho_0')(s)ds \Bigg|$$

\begin{remark}{\label{0(1/N)}}
Note that 
    $\|\psi_{k,t}(s)-\mathbb{E}_{\mu_t}({\psi_{k,t}(s)})\|_{\infty}=O(1/N)$ independent of t, we deduce $$ \lim_{N\to\infty} |(G_{1,N}(\mathbf{x})+G_{2,N}(\mathbf{x}))| \leq  \bigl( C |\bar{v}|\int 
|D \mathcal{F}_{\mu_{t-1}} \circ \cdots \circ D \mathcal{F}_{\mu_0} (\rho_0'(s))|ds \bigr)$$
which gives us 
\[
\limsup_{t\to\infty}\lim_{N\to\infty}\chi_{\psi,N,t}(\mathbf{x},\mathbf{v})
\leq
\limsup_{t\to\infty}
\frac{1}{t}
\log
\left\|
D_{\mu_{t-1}}\mathcal F\circ\cdots\circ D_{\mu_0}\mathcal F(\rho_0')
\right\|_{L^1}.
\]
\end{remark}

\end{proof}

\subsection{Proof of Theorem \ref{Maintheorem2}}

Before proving the main result, we state many auxiliary lemmas whose proofs are provided in Appendix \ref{Appendix}.
Let \(\mu^*\) be a fixed point of \(\mathcal{F}\), and assume that \(\mu^*\) has a strictly positive density \(\rho^*\). By choosing \(\varepsilon>0\) sufficiently small, every measure sufficiently close to \(\mu^*\) in the \(C^0\) (and therefore also on the \(C^1\)) topology has a strictly positive density. Define the neighbourhoods
\[
U_0=
\left\{
\tilde{\mu}\in P_0:
\|\mu^*-\tilde{\mu}\|_{C^0}<\varepsilon
\right\},
\]
and
\[
U_1=
\left\{
\bar{\mu}\in P_1:
\|\mu^*-\bar{\mu}\|_{C^1}<\varepsilon
\right\}.
\]
Then \(U_0\) and \(U_1\) are Banach manifolds. Moreover, their tangent spaces at any point are given by
\[
T_{\tilde{\mu}}U_0=V_0,\qquad
T_{\tilde{\mu}}U_1=V_1 .
\]

\begin{lemma}{\label{unique attracting}}
Let $f_\mu$ be the family of maps defined in Theorem \ref{Maintheorem2}. The corresponding self-consistent operator admits an attracting fixed point  $\mu^* \in C^3$ with density strictly greater than zero and has a local basin of attraction.
\end{lemma}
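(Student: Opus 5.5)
We argue perturbatively around the uncoupled case $h\equiv0$. There the self-consistent operator reduces to the ordinary transfer operator $\mathcal L_f$ of the uniformly expanding map $f\in C^4$. On $C^3$ densities this operator has a unique invariant density $\rho_f>0$ with a spectral gap: $\mathcal L_f$ restricted to zero-average densities has spectral radius $<1$, in $C^1$ and $C^3$ alike, via Lasota--Yorke inequalities. We then construct $\mu^*$ for small $\|h\|_{C^3}<\delta$ and prove exponential stability.

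\textbf{Existence.} For $\mu\in P_0$, the map $f_\mu$ is $C^3$-close to $f$, uniformly in $\mu$. Indeed, $\|f_\mu-f\|_{C^3}\le\|h\|_{C^3}$, since the $x$-derivatives of $\int h(x,y)\,d\mu(y)$ are bounded by $\|h\|_{C^3}$. By the paper's preliminary remark, each $f_\mu$ is $\lambda_0$-expanding. Uniform Lasota--Yorke inequalities then give each $\mathcal L_{f_\mu}$ a unique invariant density $\rho_\mu\in C^3$, bounded in $C^3$ uniformly in $\mu$ and strictly positive. Standard perturbation theory (Keller--Liverani, or direct $C^1$-resolvent estimates) makes $\mu\mapsto\rho_\mu$ Lipschitz from $C^0$ to $C^1$, with constant $O(\|h\|_{C^1})$.

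Fixed points of $\mathcal F$ are exactly the fixed points of $\Phi(\mu)=\rho_\mu\,dx$. For $\delta$ small, $\Phi$ is a contraction on a closed $C^1$-ball around $\rho_f$. This yields $\mu^*$, and $\mu^*\in C^3$ with density bounded below by a positive constant.

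\textbf{Attraction.} Write $\mathcal F(\mu)-\mu^*=\mathcal L_{f_{\mu^*}}(\mu-\mu^*)+E(\mu)$, where
\[
E(\mu)=\bigl(\mathcal L_{f_\mu}-\mathcal L_{f_{\mu^*}}\bigr)\mu .
\]
The first term contracts after $n_0$ iterates: $\|\mathcal L_{f_{\mu^*}}^{n_0}\nu\|_{C^1}\le\tfrac14\|\nu\|_{C^1}$ on zero-average $\nu$, by the spectral gap that $\mathcal L_{f_{\mu^*}}$ inherits from $\mathcal L_f$.

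For the error term, we estimate $E(\mu)$ in $C^1$. This requires $\mu\in C^2$, because $f_\mu-f_{\mu^*}$ is controlled in $C^3$ by $\|h\|_{C^3}\|\mu-\mu^*\|_{C^0}$, and differentiating the difference of transfer operators consumes one derivative of the density. The expected bound is
\[
\|E(\mu)\|_{C^1}\le C\|h\|_{C^3}\,\|\mu\|_{C^2}\,\|\mu-\mu^*\|_{C^1}.
\]

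\textbf{Main obstacle.} The loss of derivative in $E$ is where the argument is delicate. We plan to close it as follows. Uniform Lasota--Yorke inequalities in $C^2$ keep the orbit $\mathcal F^t(\mu)$ bounded in $C^2$ whenever $\mu$ lies in a $C^1$-ball and is $C^2$-bounded. Combining this $C^2$ bound with the contraction of $\mathcal L_{f_{\mu^*}}^{n_0}$, and choosing $\delta$ so that $C\|h\|_{C^3}\cdot\sup_t\|\mathcal F^t\mu\|_{C^2}\le\tfrac14$, gives
\[
\|\mathcal F^{n_0}(\mu)-\mu^*\|_{C^1}\le\tfrac12\|\mu-\mu^*\|_{C^1}.
\]
Iterating yields the constants $C,\gamma$ of the local-basin definition.

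If the $C^2$ control proves awkward, the fallback is to use the strong-to-weak Fréchet differentiability of $\mathcal F$ from $C^2$ to $C^1$, as in \cite{castorrini2025differential}. One checks that $D_{\mu^*}\mathcal F$ is a small perturbation of $\mathcal L_{f_{\mu^*}}$, so its spectral radius on $V_1$ is $<1$. Stability of the fixed point then follows from a standard linearised-stability argument.
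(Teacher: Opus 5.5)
Your existence argument is correct, and it differs from the paper's route. The paper adapts the invariant-cone construction of \cite{castorrini2026stability} and obtains $\mu^*$ from Schauder's theorem on a cone of positive $C^2$ densities. You instead run a Banach contraction on $\Phi(\mu)=\rho_\mu\,dx$, using $\rho_{\mu_1}-\rho_{\mu_2}=(I-\mathcal L_{f_{\mu_1}})^{-1}(\mathcal L_{f_{\mu_1}}-\mathcal L_{f_{\mu_2}})\rho_{\mu_2}$ on zero-average $C^1$ densities. This is more elementary and also gives uniqueness. One small correction: the Lipschitz constant is $O(\|h\|_{C^2})$, not $O(\|h\|_{C^1})$, because the $C^1$ norm of a difference of transfer operators sees $f_{\mu_1}''-f_{\mu_2}''$. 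This is harmless since $\|h\|_{C^3}<\delta$.

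The gap is in the attraction step. A local basin in the paper's sense is the whole ball $U_\varepsilon(\mu^*)=\{\mu\in C^1:\|\mu-\mu^*\|_{C^1}\le\varepsilon\}$, with $C,\gamma$ uniform on it. Your splitting $E(\mu)=(\mathcal L_{f_\mu}-\mathcal L_{f_{\mu^*}})\mu$ puts the lost derivative on $\mu$ itself. So your estimate needs $\mu\in C^2$, and the condition $C\|h\|_{C^3}\sup_t\|\mathcal F^t\mu\|_{C^2}\le\frac14$ makes $\delta$ depend on $\|\mu\|_{C^2}$.

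This cannot be patched within your splitting, because transfer operators do not smooth, so a $C^1\setminus C^2$ orbit never gains regularity. The $C^1$ norm of $E(\mu)$ contains $\rho'\circ g_\mu-\rho'\circ g_{\mu^*}$ for the inverse branches $g$. This is $O(\|h\|\,\|\mu-\mu^*\|)$ only if $\rho''$ is controlled. For a density in the $C^1$-ball whose derivative oscillates on scales finer than $\|g_\mu-g_{\mu^*}\|_\infty\lesssim\|h\|_{C^1}\|\mu-\mu^*\|_{C^0}$, it is of the size of $\|\mu-\mu^*\|_{C^1}$ with no small factor. What you actually obtain is attraction on $\{\|\mu-\mu^*\|_{C^1}\le\varepsilon,\ \|\mu\|_{C^2}\le K\}$ with $\delta=\delta(K)$, which is weaker than the lemma.

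The missing idea is to split the other way:
\[
\mathcal F(\mu)-\mu^*=\mathcal L_{f_\mu}(\mu-\mu^*)+\bigl(\mathcal L_{f_\mu}-\mathcal L_{f_{\mu^*}}\bigr)\mu^* .
\]
Now the derivative is lost on the smooth fixed point, and the error is at most $C_0\|h\|_{C^2}\|\mu^*\|_{C^2}\|\mu-\mu^*\|_{C^0}$. This is exactly item (i) of the regularity assumption verified in the paper's appendix. The price is that you need uniform exponential contraction on $V_1$ of the non-autonomous products $(f_{\mu_{t-1}}\circ\cdots\circ f_{\mu_0})_*$, not just of powers of $\mathcal L_{f_{\mu^*}}$. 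Granted this, a discrete Gronwall estimate gives $\|\mathcal F^t\mu-\mu^*\|_{C^1}\le C(\theta+C'\|h\|_{C^2})^t\|\mu-\mu^*\|_{C^1}$ uniformly on the $C^1$-ball.

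The paper supplies this contraction by invoking \cite{castorrini2025differential} (Lemma 6 and Corollary 7), after checking the sequential Lasota--Yorke and regularity assumptions in the appendix. For small coupling it would also follow from loss of memory for compositions of uniformly expanding maps.

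Your fallback does not close the gap on its own. Since $\mathcal F$ is only strong-to-weak differentiable, the remainder is small only in the weak norm. So ``spectral radius $<1$ implies local stability'' is not a standard linearisation principle here; justifying it is precisely the content of \cite{castorrini2025differential}. Moreover, with $C^2$ as the strong norm it would again give only a $C^2$-neighbourhood.
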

\begin{proof}
We adapt the invariant-cone construction of
\cite[Theorem 4.1]{castorrini2026stability} to the present additive
coupling. For sufficiently small weak coupling, the associated
self-consistent transfer operator maps a suitable closed, convex cone
of strictly positive \(C^2\) densities into itself, with the defining
cone estimates uniform in the coupling parameter. By the Schauder fixed
point theorem, it therefore admits a fixed point
\(\mu^*=\rho^*\,dx\) with \(\rho^*>0\). The uniform cone estimates give a uniform $C^2$ bound on the fixed density for all kernels $h$ satisfying $\|h\|_{C^3}\leq\delta$, for sufficiently small $\delta$. Since \(f_{\mu^*}\) is a \(C^4\)
uniformly expanding map, standard invariant-density regularity for
uniformly expanding maps implies \(\rho^*\in C^3\). Finally, the
uniform estimates required for local attraction allow
\cite[Lemma 6 and Corollary 7]{castorrini2025differential} to be applied,
once Assumptions~\ref{Fixed point}--\ref{Boundedness} are verified.
We establish these assumptions in Appendix~\ref{Appendix}.
\end{proof}

\begin{lemma}{\label{different frechet}}
Let $\eta \in C^2 \cap U_0$ be a measure in the local basin of attraction of the fixed point $\mu^*$. Define the map
\[
\mathbb{L}:U_0\to C^1,
\qquad
\mathbb{L}(\tilde{\mu}):=(f_{\tilde{\mu}})_*\eta.
\]
Then \(\mathbb{L}\) is Fréchet differentiable at $\eta$. Moreover, its derivative
\[
D_{\eta} \mathbb{L}:V_0\to V_1
\]
satisfies
\[
\|D_\eta \mathbb{L}\|_{V_0\to V_1}
\leq
C\|h\|_{C^2}\|\eta\|_{C^2}.
\]
In particular, for every \(\nu\in V_0\),
\[
\|D_\eta\mathbb{L}(\nu)\|_{C^1}
\leq
C\|h\|_{C^2}\|\eta\|_{C^2}\|\nu\|_{C^0}.
\]
Furthermore, the Fr\'echet derivative of the self-consistent operator $\mathcal{F}$ at $\eta$ exists and is  given by
\[
D_{\eta}\mathcal{F}
=
(f_\eta)_*
+
D_{\eta}\mathbb{L}.
\]
\end{lemma}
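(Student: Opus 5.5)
I would prove the lemma by writing the push-forward explicitly and differentiating in the coupling direction. Since every $f_{\tilde\mu}$ with $\tilde\mu\in U_0$ is a uniformly expanding $C^4$ circle map (by the choice of small $\|h\|$), the transfer operator acting on the density $\rho_\eta$ of $\eta$ is
\[
\mathbb{L}(\tilde\mu)(x)=\sum_{f_{\tilde\mu}(y)=x}\frac{\rho_\eta(y)}{|f_{\tilde\mu}'(y)|}.
\]
The dependence on $\tilde\mu$ enters only through the additive term $H_{\tilde\mu}(y):=\int h(y,z)\,d\tilde\mu(z)$. The map $\tilde\mu\mapsto H_{\tilde\mu}$ is affine and bounded from $C^0$ measures into $C^3(\mathbb T)$, with $\|H_\nu\|_{C^k}\le \|h\|_{C^k}\|\nu\|_{C^0}$. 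So it suffices to show that $H\mapsto (f+H)_*\eta$ is Fréchet differentiable from a small $C^2$-ball of functions into $C^1$ densities, and then compose with this bounded linear map.

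\textbf{Step 1 (derivative formula).} For $\nu\in V_0$, the candidate derivative is
\[
D_\eta\mathbb{L}(\nu)=-\bigl((f_\eta)_*(H_\nu\,\rho_\eta)\bigr)'.
\]
This is the standard formula for perturbing a map additively: test against $\varphi$ and use $\varphi(f+\varepsilon H)-\varphi(f)=\varepsilon\varphi'(f)H+O(\varepsilon^2)$, exactly as in the $t=1$ computation of Lemma~\ref{relationtodifferential}. Expanding the derivative,
\[
\bigl((f_\eta)_*q\bigr)'=(f_\eta)_*\Bigl(\bigl(q/f_\eta'\bigr)'\Bigr).
\]
Hence the $C^1$ norm of $D_\eta\mathbb{L}(\nu)$ requires two derivatives of $q=H_\nu\rho_\eta$ and of $1/f'_\eta$. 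This yields
\[
\|D_\eta\mathbb{L}(\nu)\|_{C^1}\le C\|H_\nu\|_{C^2}\|\rho_\eta\|_{C^2}\le C\|h\|_{C^2}\|\eta\|_{C^2}\|\nu\|_{C^0},
\]
where $C$ depends on $\lambda_0$ and on $\|f\|_{C^3}$ through the branch inverses. This is where $\eta\in C^2$ is used.

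\textbf{Step 2 (Fréchet remainder).} Write $f_{\eta+\nu}=f_\eta+H_\nu$ and parametrise by $s\in[0,1]$ through $f_s=f_\eta+sH_\nu$. Then
\[
\mathbb{L}(\eta+\nu)-\mathbb{L}(\eta)-D_\eta\mathbb{L}(\nu)=\int_0^1\Bigl(\partial_s (f_s)_*\eta-\partial_s (f_s)_*\eta\big|_{s=0}\Bigr)\,ds.
\]
I must bound this remainder by $o(\|\nu\|_{C^0})$ in $C^1$. Here $\partial_s(f_s)_*\eta=-\bigl((f_s)_*(H_\nu\rho_\eta)\bigr)'$, so the remainder is a difference of $(f_s)_*$ and $(f_\eta)_*$ applied to a $C^2$ density and then differentiated. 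The inverse branches $y_i(x,s)$ depend $C^2$-smoothly on $s$ by the implicit function theorem, using the uniform expansion. Their $C^2$ distance is $O(s\|H_\nu\|_{C^3})$, which gives $O(\|h\|_{C^3}\|\nu\|_{C^0})$. The remainder is therefore $O(\|\nu\|_{C^0}^2)$, with constants controlled by $\|h\|_{C^3}$, $\|f\|_{C^4}$ and $\|\rho_\eta\|_{C^2}$.

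\textbf{Main obstacle.} I expect Step 2 to be the hardest part: controlling the $C^1$ norm of the difference of transfer operators requires $(f_s)_*-(f_\eta)_*$ to gain a small factor on $C^2$ densities measured in $C^1$. That requires the $C^4$ regularity of $f$ and the $C^3$ regularity of $h$, and each loss of a derivative has to be tracked carefully. Finally, for $\mathcal F$ I will split
\[
\mathcal F(\eta+\nu)-\mathcal F(\eta)=\bigl[(f_{\eta+\nu})_*\eta-(f_\eta)_*\eta\bigr]+(f_{\eta+\nu})_*\nu.
\]
The first bracket is handled by Steps 1 and 2. For the second term, $(f_{\eta+\nu})_*\nu-(f_\eta)_*\nu$ is $O(\|\nu\|^2)$ in the weak sense by the same branch-continuity estimate, giving $D_\eta\mathcal F=(f_\eta)_*+D_\eta\mathbb{L}$.
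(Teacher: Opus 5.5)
Your route is the paper's route. Your $-\bigl((f_\eta)_*(H_\nu\rho_\eta)\bigr)'$ is the paper's $-(f_\eta)_*\bigl((\rho_\eta H_\nu/f'_\eta)'\bigr)$, rewritten with the identity you quote. The paper also writes the remainder as $\int_0^1$ of the difference of the derivatives along $f_\eta+rH_\nu$, and it treats $\mathcal F$ with your splitting and the bound $\|(f_{\eta+\nu})_*\nu-(f_\eta)_*\nu\|_{C^0}\le C_1\|h\|_{C^2}\|\nu\|_{C^1}\|\nu\|_{C^0}$. In that last step, say explicitly that you divide by the strong norm $\|\nu\|_{C^1}$, since comparing branches uses a derivative of the density of $\nu$.

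The step that fails as written is your claim in Step 2 that the remainder is $O(\|\nu\|_{C^0}^2)$ with constants depending on $\|\rho_\eta\|_{C^2}$. In your representation the remainder is $-\int_0^1\bigl[((f_s)_*q)'-((f_\eta)_*q)'\bigr]\,ds$ with $q=H_\nu\rho_\eta$, and its $C^1$ norm involves $q''\circ y_i(\cdot,s)-q''\circ y_i(\cdot,0)$. Every other contribution is indeed $O(\|\nu\|_{C^0}^2)$, but the piece $H_\nu\circ y_i(\cdot,0)\,\bigl(\rho_\eta''\circ y_i(\cdot,s)-\rho_\eta''\circ y_i(\cdot,0)\bigr)$ is not. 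Since $\rho_\eta''$ is only continuous, the best bound is $\|H_\nu\|_{C^0}\,\omega(C\|\nu\|_{C^0})$, where $\omega$ is the modulus of continuity of $\rho_\eta''$. If $\rho_\eta''$ is merely $\alpha$-H\"older, the remainder can genuinely be of order $\|\nu\|_{C^0}^{1+\alpha}$. Smoothness of the branches in $s$ does not help; a quadratic bound needs $\rho_\eta''$ Lipschitz, e.g.\ $\eta\in C^3$. The paper's proof has the same slip: it bounds $\|(\eta H_\nu/f'_{\eta+r\nu})'\|_{C^2}$, which involves $\eta'''$.

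The repair is cheap. Since $\omega$ depends only on the fixed $\eta$, the remainder $R(\nu)=\mathbb L(\eta+\nu)-\mathbb L(\eta)-D_\eta\mathbb L(\nu)$ satisfies $\|R(\nu)\|_{C^1}\le C\|\nu\|_{C^0}\bigl(\|\nu\|_{C^0}+\omega(C\|\nu\|_{C^0})\bigr)=o(\|\nu\|_{C^0})$, which is all Fr\'echet differentiability requires.

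Relatedly, state your reduction for the $C^3$ norm of $H$, which is what Step 2 actually uses, not the $C^2$ norm. Measured in $C^2$, the map $H\mapsto(f+H)_*\eta$ is not Fr\'echet differentiable into $C^1$. Its remainder contains $K''$ evaluated at the inverse branches of $f_\eta+K$ minus $K''$ at the branches of $f_\eta$, and these branches are $O(\|K\|_{C^0})$ apart. A $K$ whose second derivative oscillates with amplitude comparable to $\|K\|_{C^2}$ on the length scale $\|K\|_{C^0}$ makes this difference of order $\|K\|_{C^2}$. Since $\|H_\nu\|_{C^3}\le\|h\|_{C^3}\|\nu\|_{C^0}$, working with the $C^3$ norm costs nothing.
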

\begin{proof}
    See Appendix \ref{Appendix}.
\end{proof}
\begin{lemma}\label{contractionfor theorem}

Let $\mu_0\in C^2$ be an initial distribution, and suppose that $\mu_0$
lies in the local basin of attraction of the fixed point $\mu^*$. Then there
exist $\delta>0$, $J,\tau\in\mathbb N$, and $q_0<1$, independent of
$h$ and $j$, such that, for every $\|h\|_{C^2}\leq\delta$,
\[
\sup_{j\geq J}
\left\|
\left(
f_{\mu_{j+\tau-1}}\circ\cdots\circ f_{\mu_j}
\right)_*
\right\|_{V_1\to V_1}
\leq q_0,
\]
where $\mu_j=\mathcal F^j(\mu_0)$ denotes the $j$-th iterate of $\mu_0$
under the self-consistent transfer operator.
\end{lemma}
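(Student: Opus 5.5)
We will deduce the estimate from the uniform expansion of the maps $f_\mu$, together with the convergence $\mu_j\to\mu^*$ in $C^1$ that holds because $\mu_0$ lies in the local basin of attraction.

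\textbf{Step 1: the uncoupled composition.} Write $\Phi_j:=f_{\mu_{j+\tau-1}}\circ\cdots\circ f_{\mu_j}$, which is uniformly expanding with $\inf|\Phi_j'|\ge\lambda_0^\tau$. For a density $q$ with zero mean, the transfer operator has the explicit form
\[
(\Phi_j)_*q(x)=\sum_{\Phi_j(y)=x}\frac{q(y)}{|\Phi_j'(y)|}.
\]
Differentiating once gives a Lasota--Yorke inequality
\[
\|(\Phi_j)_*q\|_{C^1}\le \lambda_0^{-\tau}\|q'\|_{C^0}+B_\tau\|q\|_{C^0},
\]
where $B_\tau$ depends on the distortion $\sup|\Phi_j''|/|\Phi_j'|^2$. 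This distortion is bounded uniformly in $\tau$, $j$ and $h$ with $\|h\|_{C^2}\le\delta$, by the usual bounded distortion argument. The key point is that $f_\mu$ is $C^2$ with $C^2$-norm bounded uniformly in $\mu\in P_0$, since $\mu$ enters only through $\int h(x,y)\,d\mu(y)$.

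\textbf{Step 2: the $C^0$ part on zero-mean densities.} We need exponential contraction of $C^0$ norms of zero-mean densities, uniformly in $j$ and $h$. We do this by comparison with the fixed-point map $f_{\mu^*}$. It is a uniformly expanding $C^2$ circle map, so its transfer operator on $C^1$ has a spectral gap. Its iterates therefore satisfy
\[
\|(f_{\mu^*}^\tau)_*q\|_{C^1}\le C\theta^\tau\|q\|_{C^1}\quad\text{for }q\in V_1,\ \theta<1.
\]
To keep the constants independent of $h$, we treat $f_{\mu^*}$ as a small $C^2$ perturbation of $f$ itself. For $\|h\|_{C^2}\le\delta$ its distance from $f$ is $O(\delta)$. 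Perturbation theory for quasi-compact operators (Keller--Liverani, or a direct cone/Birkhoff argument) then gives $C,\theta$ uniform over small $\delta$. Combining this with the Lasota--Yorke bound from Step 1, we fix $\tau$ so that $C\theta^\tau\le 1/4$.

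\textbf{Step 3: closeness of the sequence to the fixed-point dynamics.} Since $\|\mu_j-\mu^*\|_{C^1}\le Ce^{-\gamma j}$, each map $f_{\mu_{j+i}}$ is $C^2$-close to $f_{\mu^*}$, with
\[
\|f_{\mu_{j+i}}-f_{\mu^*}\|_{C^2}\le\|h\|_{C^2}\|\mu_{j+i}-\mu^*\|_{C^0}.
\]
Hence $\Phi_j\to f_{\mu^*}^\tau$ in $C^2$ as $j\to\infty$. The transfer operator depends continuously on the map only in the strong-to-weak sense ($C^2$ maps, $C^1\to C^0$). So we do not compare $(\Phi_j)_*$ with $(f_{\mu^*}^\tau)_*$ directly. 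Instead we use the Keller--Liverani framework: a uniform Lasota--Yorke inequality (Step 1) together with a weak-norm closeness estimate
\[
\|(\Phi_j)_*-(f_{\mu^*}^\tau)_*\|_{C^1\to C^0}\to0.
\]
This yields, for a doubled block length $2\tau$ (which we rename $\tau$), the bound
\[
\|(\Phi_j)_*\|_{V_1\to V_1}\le \tfrac12=:q_0\quad\text{for all } j\ge J.
\]
The index $J$ is chosen so that $e^{-\gamma J}$ is small. Mean zero is preserved throughout, so $V_1$ is invariant.

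\textbf{Main obstacle.} The delicate point is uniformity in $h$. The constants $\tau$, $J$ and $q_0$ must not depend on $h$, whereas the spectral gap of $f_{\mu^*}$ a priori depends on $h$ through $\mu^*$. The plan handles this by anchoring everything to the uncoupled map $f$ and treating both $f_{\mu^*}$ and $f_{\mu_j}$ as $O(\delta)$ perturbations of $f$ in $C^2$, uniformly over $\mu$. Seen this way, for $\|h\|_{C^2}\le\delta$ small, the convergence $\mu_j\to\mu^*$ is not needed at all. One can then even take $J=0$ and argue directly that sequential compositions of maps $C^2$-close to $f$ contract $V_1$, using the sequential (non-autonomous) Lasota--Yorke plus cone-contraction argument.
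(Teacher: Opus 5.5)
Your proposal is correct. Steps 1--3 are essentially the paper's proof written out. The paper checks three hypotheses uniformly for small $h$: a sequential Lasota--Yorke inequality, convergence to equilibrium for the frozen operator $(f_{\mu_h^*})_*$ on $V_1$, and the strong-to-weak closeness $\|(f_{\mu_j})_*-(f_{\mu_h^*})_*\|_{C^1\to C^0}\to0$ supplied by the basin of attraction. It then invokes Lemma~19 of \cite{castorrini2025differential}, which is precisely your doubled-block Keller--Liverani estimate.

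Two differences are worth noting. First, for the $h$-uniform spectral gap of $(f_{\mu_h^*})_*$, the paper appeals to the uniform Lasota--Yorke inequality plus compactness of $C^1\hookrightarrow C^0$. On its own, that only bounds the essential spectral radius uniformly. Your perturbative argument from the uncoupled operator $f_*$ is what actually makes $C,\theta$ uniform, since the gap of $f_*$ does not depend on $h$ ($f$ is expanding of degree $|d|\geq2$, hence exact).

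Second, your closing remark is a genuinely different and simpler route. Because $\|f_\mu-f\|_{C^2}\le\|h\|_{C^2}$ for every probability measure $\mu$, one gets $\|(f_\mu)_*-f_*\|_{C^1\to C^0}\le C\|h\|_{C^2}$ uniformly in $\mu$. The same block argument with $f_*$ as the reference operator then gives $\|(f_{\mu_{j+\tau-1}}\circ\cdots\circ f_{\mu_j})_*\|_{V_1\to V_1}\le q_0$ for all $j\geq0$ and for an arbitrary sequence of probability measures. This bypasses $\mu^*$ and the basin hypothesis altogether. In particular, it avoids the paper's assertion, made without proof, that the basin constants $C,\gamma$ are uniform in $h$. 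The paper's formulation keeps the link with the fixed point, but for small coupling your version is stronger ($J=0$) and rests on fewer uniformity claims.

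One minor slip: in Step~1 the coefficient of $\|q'\|_{C^0}$ should be $e^{B_0}\lambda_0^{-\tau}$, not $\lambda_0^{-\tau}$ alone. Here $e^{B_0}$ is the bounded-distortion bound on $\sup_x\sum_{\Phi_j(y)=x}|\Phi_j'(y)|^{-1}$. This does not affect the argument.
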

\begin{proof}
    See Appendix \ref{Appendix}.
\end{proof}

\begin{proof}[\textbf{Proof of Theorem \ref{Maintheorem2}}]
From Remark \ref{0(1/N)}
\[
\limsup_{t\to\infty}\lim_{N\to\infty}\chi_{\psi,N,t}(\mathbf{x},\mathbf{v})
\leq
\limsup_{t\to\infty}
\frac{1}{t}
\log
\left\|
D_{\mu_{t-1}}\mathcal F\circ\cdots\circ D_{\mu_0}\mathcal F(\rho_{0}')
\right\|_{L^1}.
\]
Therefore, it is enough to prove that the Lyapunov exponent of the cocycle is strictly negative, which is guaranteed if there exist $J,\tau\in\mathbb{N}$ and
$q<1$ such that
\[
\sup_{j\geq J}
\left\|
D_{\mu_{j+\tau-1}}\mathcal F\circ\cdots\circ
D_{\mu_j}\mathcal F
\right\|_{V_1\to V_1}
\leq q.
\]
For $\nu\in V_1$, define
\[
A_m^j(\nu)
:=
D_{\mu_{j+m-1}}\mathcal{F}\circ\cdots\circ
D_{\mu_j}\mathcal{F}(\nu),
\]
and
\[
d_m^j(\nu)
:=
\left\|
(f_{\mu_{j+m-1}}\circ\cdots\circ f_{\mu_j})_*\nu
-
A_m^j(\nu)
\right\|_{V_1}.
\]
For $m=1$, using the decomposition
\[
D_{\mu_j}\mathcal{F}(\nu)
=
(f_{\mu_j})_*\nu+D_{\mu_j}\mathbb{L}(\nu),
\]
we obtain
\[
d_1^j(\nu)
=
\left\|
(f_{\mu_j})_*\nu-D_{\mu_j}\mathcal{F}(\nu)
\right\|_{V_1}
=
\left\|D_{\mu_j}\mathbb{L}(\nu)\right\|_{V_1}.
\]
By the estimate on $D_{\mu_j}\mathbb{L}$,
\[
d_1^j(\nu)
\leq
C\|h\|_{C^2}\|\mu_j\|_{C^2}\|\nu\|_{V_0} \leq
C\|h\|_{C^2}\|\mu_j\|_{C^2}\|\nu\|_{V_1}.
\]
For $m>1$, since
\[
D_{\mu_{j+m-1}}\mathcal{F}
\bigl(A_{m-1}^j(\nu)\bigr)
=
(f_{\mu_{j+m-1}})_*A_{m-1}^j(\nu)
+
D_{\mu_{j+m-1}}\mathbb{L}
\bigl(A_{m-1}^j(\nu)\bigr),
\]
we obtain
\[
d_m^j(\nu)
=
\Bigl\|
(f_{\mu_{j+m-1}})_*
\bigl[
(f_{\mu_{j+m-2}}\circ\cdots\circ f_{\mu_j})_*\nu
-
A_{m-1}^j(\nu)
\bigr]
-
D_{\mu_{j+m-1}}\mathbb{L}
\bigl(A_{m-1}^j(\nu)\bigr)
\Bigr\|_{V_1}.
\]
Therefore,
\[
d_m^j(\nu)
\leq
\|(f_{\mu_{j+m-1}})_*\|_{V_1\to V_1}
d_{m-1}^j(\nu)
+
\left\|
D_{\mu_{j+m-1}}\mathbb{L}
\bigl(A_{m-1}^j(\nu)\bigr)
\right\|_{V_1}.
\]
Using the uniform bound on the pushforward operator and the estimate on
$D_\mu\mathbb{L}$, we obtain
\[
d_m^j(\nu)
\leq
C d_{m-1}^j(\nu)
+
C\|h\|_{C^2}
\|\mu_{j+m-1}\|_{C^2}
\|A_{m-1}^j(\nu)\|_{V_0}.
\]
For $1\leq m\leq\tau$, where $\tau$ is fixed, the derivative
products are bounded. Thus, there exists $C_{m-1}>0$, independent of $j$ and of the
kernel $h$ satisfying $\|h\|_{C^3}\leq\delta$, such that
\[
\|A_{m-1}^j(\nu)\|_{V_0}
\leq C_{m-1}\|\nu\|_{V_1}.
\]
Indeed, the required bounds on the finite products of the
pushforward operators follow from the uniform expansion and the
uniform $C^3$ bounds on the maps $f_{\mu_j}$. %Thus, there exists $C_{m-1}>0$,
%independent of $j$, such that
%\[
%\|A_{m-1}^j(\nu)\|_{V_0}
%\leq 
%C_{m-1}\|\nu\|_{V_1}.
%\]
Together with the uniform $C^2$ bound \footnote{By the same approach used to establish the $C^1$ Lasota--Yorke
inequality in Assumption~\ref{Sequential Lasota–Yorke Inequalities},
and using the higher regularity $f\in C^4(\mathbb T,\mathbb T)$ and
$h\in C^4(\mathbb T^2,\mathbb R)$, the corresponding $C^2$ estimate holds.
In particular, there exist constants $A_2,B_2>0$ and $\alpha_2<1$,
independent of the sequence $\{\mu_i\}$, such that
\[
\left\|
(f_{\mu_n}\circ\cdots\circ f_{\mu_1})_*\nu
\right\|_{C^2}
\leq
A_2\alpha_2^n\|\nu\|_{C^2}
+
B_2\|\nu\|_{C^1}.
\]
Together with the uniform $C^1$ bound on the orbit $(\mu_j)_{j\geq0}$,
this yields a constant $M_2>0$, independent of $j$, such that
$
\sup_{j\geq0}\|\mu_j\|_{C^2}\leq M_2.
$
Hence, for $j\geq J$ and $1\leq m\leq\tau$,
$
\|\mu_{j+m-1}\|_{C^2}\leq M_2.
$}
\[
\sup_{j\geq J}\|\mu_j\|_{C^2}\leq M_2,
\]
this gives
\[
d_m^j(\nu)
\leq
C d_{m-1}^j(\nu)
+
C_m\|h\|_{C^2}\|\nu\|_{V_1},
\]
where $C_m>0$ is independent of $j$, $\nu$, and
$h$ satisfying $\|h\|_{C^3}\leq\delta$.
An induction over $m$ yields
\[
d_m^j(\nu)
\leq K_m\|h\|_{C^2}\|\nu\|_{V_1},
\]
where $K_m>0$ depends only on the uniform bounds above (and on
$m\leq\tau$), and is therefore independent of $j$, $\nu$, and
$h$ satisfying $\|h\|_{C^3}\leq\delta$.
Consequently,
\[
\left\|
D_{\mu_{j+m-1}}\mathcal{F}\circ\cdots\circ
D_{\mu_j}\mathcal{F}
-
(f_{\mu_{j+m-1}}\circ\cdots\circ f_{\mu_j})_*
\right\|_{V_1\to V_1}
=
\sup_{\nu\in V_1\setminus\{0\}}
\frac{d_m^j(\nu)}{\|\nu\|_{V_1}}
\leq
K_m\|h\|_{C^2}.
\]
In particular, for $m=\tau$,
\[
\sup_{j\geq J}
\left\|
D_{\mu_{j+\tau-1}}\mathcal{F}\circ\cdots\circ
D_{\mu_j}\mathcal{F}
-
(f_{\mu_{j+\tau-1}}\circ\cdots\circ f_{\mu_j})_*
\right\|_{V_1\to V_1}
\leq
K_\tau\|h\|_{C^2}.
\]
By Lemma \ref{contractionfor theorem}, with $J,\tau$, and $q_0<1$ chosen uniformly for
$\|h\|_{C^3}\leq\delta$, we have
\[
\sup_{j\geq J}
\left\|
\left(f_{\mu_{j+\tau-1}}\circ\cdots\circ f_{\mu_j}\right)_*
\right\|_{V_1\to V_1}
\leq q_0.
\]
Hence,
\[
\begin{aligned}
\sup_{j\geq J}
\left\|
D_{\mu_{j+\tau-1}}\mathcal F\circ\cdots\circ D_{\mu_j}\mathcal F
\right\|_{V_1\to V_1}
&\leq q_0+K_\tau\|h\|_{C^2}.
\end{aligned}
\]
Choose $\delta>0$ sufficiently small such that
\[
q_0+K_\tau\delta<1.
\]
Since
\[
\|h\|_{C^2}\leq\|h\|_{C^3}\leq\delta,
\]
we obtain
\[
\sup_{j\geq J}
\left\|
D_{\mu_{j+\tau-1}}\mathcal F\circ\cdots\circ D_{\mu_j}\mathcal F
\right\|_{V_1\to V_1}
\leq
q_0+K_\tau\|h\|_{C^2}
\leq
q_0+K_\tau\delta<1.
\]
Since this holds for every $j\geq J$, the cocycle
\[
D_{\mu_{t-1}}\mathcal F\circ\cdots\circ D_{\mu_0}\mathcal F
\]
can be decomposed into a finite initial segment and a sequence of
contracting blocks of length $\tau$. The finite initial segment contributes
only a multiplicative constant, while the contracting blocks yield
exponential decay of the operator norm. Therefore,
\[
\limsup_{t\to\infty}
\frac{1}{t}
\log
\left\|
D_{\mu_{t-1}}\mathcal F
\circ\cdots\circ
D_{\mu_0}\mathcal F
\right\|_{V_1\to V_1}
< 0.
\]
Combining this uniform operator norm bound with the continuous embedding from $V_1$ to $L^1$ for the vector $\rho'$, we conclude that
\[
\limsup_{t\to\infty}\lim_{N\to\infty}\chi_{\psi,N,t}(\mathbf{x},\mathbf{v})
\leq
\limsup_{t\to\infty}
\frac{1}{t}
\log
\left\|
D_{\mu_{t-1}}\mathcal F\circ\cdots\circ D_{\mu_0}\mathcal F(\rho_0')
\right\|_{L^1}
< 0,
\]
which completes the proof.
\end{proof}

\appendix
\section{Appendix}\label{Appendix}

Before proving Lemmas \ref{unique attracting}, \ref{different frechet}, and \ref{contractionfor theorem}, we verify that the family of maps
introduced in Theorem $\ref{Maintheorem2}$ satisfies the assumptions listed in 
\cite[Section 2]{castorrini2025differential}. Once these assumptions are
checked, the statements of the lemmas follow from them. We now state the assumptions and verify that they hold in our setting.
\begin{assumption}[\textbf{Fixed point}] \label{Fixed point}
   There exist $\mu^* \in C^3$  such that $\mathcal{F}(\mu^*)= \mu^*$.
   \end{assumption}
\begin{proof}
 This follows from Lemma \ref{unique attracting}      
\end{proof}.
\begin{assumption}[\textbf{Uniform Boundedness}]{\label{Uniform Boundedness}}
 There exists a constant $C$ such that for each $\mu \in U_{0}$, the operator norms satisfy:
 \begin{equation}
\|(f_\mu)_{*}\|_{C^0 \to C^0} \leq C, \quad \|\ (f_\mu)_{*}\|_{C^1 \to C^1} \leq C, \quad \|(f_\mu)_{*}\|_{C^2 \to C^2} \leq C.
\end{equation}   
\end{assumption}
\begin{proof}
Let \(\nu \in C^k\) be a measure with density \(\rho \in C^k\), where
\(k=0,1,2\). The Perron--Frobenius operator associated with \(f_\mu\) is
given by
\[
\|(f_\mu)_*\nu\|_{C^k}=\|\mathcal L_\mu\rho\|_{C^k}.
\]
Since \(U_0\) is a sufficiently small neighbourhood of \(\mu^*\) and the
map \(\mu\mapsto f_\mu\) depends continuously on \(\mu\), all maps
\(f_\mu\), \(\mu\in U_0\), are homotopic to \(f_{\mu^*}\). Hence their
degrees are identical. Denoting this common degree by \(d\), we have
\[
\deg(f_\mu)=d,\qquad \mu\in U_0 .
\]
Therefore, for every \(x\in\mathbb{T}\), the set \(f_\mu^{-1}(x)\) contains
exactly \(d\) elements.
Using the explicit formula for the Perron--Frobenius operator,
\[
(\mathcal L_\mu \rho)(x)
=
\sum_{y\in f_\mu^{-1}(x)}
\frac{\rho(y)}{|f_\mu'(y)|} .
\]
Since \(f_\mu'\) has constant sign on \(\mathbb T\), the differentiated
formulas below differ from the orientation-preserving case only by a
common sign, which is irrelevant for the \(C^k\)-norm estimates.
Differentiating, we obtain
\[
(\mathcal L_\mu\rho)'(x)
=
\sum_{y\in f_\mu^{-1}(x)}
\left[
\frac{\rho'(y)}{(f_\mu'(y))^2}
-
\frac{\rho(y)f_\mu''(y)}{(f_\mu'(y))^3}
\right].
\]
Similarly,
\[
(\mathcal L_\mu\rho)''(x)
=
\sum_{y\in f_\mu^{-1}(x)}
\left[
\frac{\rho''(y)}{(f_\mu'(y))^3}
-
3\frac{\rho'(y)f_\mu''(y)}{(f_\mu'(y))^4}
+
\rho(y)
\frac{3(f_\mu''(y))^2-f_\mu'(y)f_\mu'''(y)}
     {(f_\mu'(y))^5}
\right].
\]
By the uniform expansion assumption, there exists \(\lambda>1\) such that
\[
\inf_{\mu\in U_0}\inf_{x\in\mathbb{T}} |f_\mu'(x)|\geq \lambda .
\]
Moreover,
\[
\sup_{\mu\in U_0}
\left(
\|f_\mu''\|_{C^0}
+
\|f_\mu'''\|_{C^0}
\right)<\infty .
\]
Since every sum contains exactly \(d\) terms, the above expressions are
uniformly bounded in \(\mu\). Hence there exists \(C>0\), independent of
\(\mu\in U_0\), such that
\[
\|\mathcal L_\mu\rho\|_{C^k}
\leq C\|\rho\|_{C^k},
\qquad k=0,1,2 .
\]
Therefore,
\[
\|(f_\mu)_*\|_{C^k\to C^k}\leq C,
\qquad k=0,1,2.
\]
\end{proof}

\begin{assumption}[\textbf{Sequential Lasota--Yorke Inequalities}]{\label{Sequential Lasota–Yorke Inequalities}}
There exist \(\alpha<1\) and \(A,B>0\) such that, for every
\(n\ge1\), any sequence \(\{\mu_i\}_{i=1}^n\) with
\(\mu_i\in\mathbf{P}_0\), and any \(\nu\in C^1(\mathbb T)\),
\begin{equation}{\label{lasota1}}
\|(f_{\mu _n} \circ \dots \circ f_{\mu_1})_{*}\nu\|_{C^0} \leq   B \| \nu\|_{C^0}.
\end{equation}
\begin{equation}{\label{lasota2}}
\|(f_{\mu _n} \circ \dots \circ f_{\mu_1})_{*}\nu\|_{C^1} \leq A \alpha^n \| \nu\|_{C^1} + B \| \nu\|_{C^0}.
\end{equation} 
\end{assumption}
\begin{proof}
Let
\[
F_n=f_{\mu_n}\circ\cdots\circ f_{\mu_1},
\]
and $\varphi\in C^1$ be the density of $\nu$. Denote by $\mathcal{L}_{n}$ the Perron--Frobenius operator associated with $F_n$, namely
\[
\mathcal{L}_n\varphi(x)
=
\sum_{F_n(y)=x}\frac{\varphi(y)}{|F_n'(y)|}.
\]
Then
\[
\|(F_n)_*\nu\|_{C^1}
=
\|\mathcal{L}_n\varphi\|_{C^1}.
\]
Since
\[
(\mathcal{L}_n\varphi)'(x)
=
\mathcal{L}_n\left(\frac{\varphi'}{F_n'}\right)(x)
-
\mathcal{L}_n\!\left(\varphi\frac{F_n''}{(F_n')^2}\right)(x),
\]
we obtain
\begin{align*}
\|\mathcal{L}_n\varphi\|_{C^1}
&=
\|\mathcal{L}_n\varphi\|_{C^0}
+
\|(\mathcal{L}_n\varphi)'\|_{C^0}\\
&\le
\|\mathcal{L}_n\varphi\|_{C^0}
+
\left\|\mathcal{L}_n\!\left(\frac{\varphi'}{F_n'}\right)\right\|_{C^0}
+
\left\|\mathcal{L}_n\!\left(\varphi\frac{F_n''}{(F_n')^2}\right)\right\|_{C^0}.
\end{align*}
Define
\[
C_n
:=
\sup_x
\sum_{F_n(y)=x}
\frac1{|F_n'(y)|}.
\]
Then
\[
\|\mathcal{L}_n\psi\|_{C^0}
\le
C_n\|\psi\|_{C^0}
\]
for every continuous function $\psi$. Hence
\begin{align*}
\|\mathcal{L}_n\varphi\|_{C^1}
&\leq
C_n\|\varphi\|_{C^0}
+
C_n\left\|\frac{\varphi'}{F_n'}\right\|_{C^0}
+
C_n\left\|\varphi\frac{F_n''}{(F_n')^2}\right\|_{C^0}.
\end{align*}
Since the family $\{f_{\mu_i}\}$ is uniformly expanding and $C^2$,
It follows that, for every composition
\[
F_n=f_{\mu_n}\circ\cdots\circ f_{\mu_1},
~~~\text{we have}~~~
|F_n'(x)|\ge \lambda^n
\]
and there exists a constant $B_0>0$, independent of $n$ and of
$\mu_1,\dots,\mu_n$, such that
\[
|F_n'(x)|\ge \lambda^n,
\qquad
\left\|\frac{F_n''}{(F_n')^2}\right\|_{C^0}\le B_0,
\]
we obtain
\[
\left\|\frac{\varphi'}{F_n'}\right\|_{C^0}
\le
\lambda^{-n}\|\varphi'\|_{C^0},
~~\text{and}~~
\left\|\varphi\frac{F_n''}{(F_n')^2}\right\|_{C^0}
\le
B_0\|\varphi\|_{C^0}.
\]
Therefore
\[
\|\mathcal{L}_n\varphi\|_{C^1}
\le
C_n\lambda^{-n}\|\varphi'\|_{C^0}
+
C_n(1+B_0)\|\varphi\|_{C^0}.
\]
Since $\|\varphi'\|_{C^0}\le \|\varphi\|_{C^1}$, we get
\[
\|\mathcal{L}_n\varphi\|_{C^1}
\le
C_n\lambda^{-n}\|\varphi\|_{C^1}
+
C_n(1+B_0)\|\varphi\|_{C^0}.
\]
It remains to show that $C_n$ is bounded independently of $n$.
Let $\{h_j\}$ denote the inverse branches of $F_n$. Then
\[
C_n
=
\sup_x \sum_j |h_j'(x)|
~~\text{and}~~
\frac{d}{dx}\log |h_j'(x)|
=
-\frac{F_n''(h_j(x))}
{\big(F_n'(h_j(x))\big)^2}.~~
\frac{d}{dx}\log |h_j'(x)|
=
-\frac{F_n''}{(F_n')^2},
\]
and hence
\[
\left|
\frac{d}{dx}\log |h_j'(x)|
\right|
\le B_0.
\]
Integrating  from $z$ to $x$ yields the bounded distortion estimate
\[
|h_j'(x)|
\le
e^{B_0}|h_j'(z)|
\qquad
\forall x,z.
\]
Therefore
\[
\sum_j |h_j'(x)|
\le
e^{B_0}\sum_j |h_j'(z)|.
\]
Integrating with respect to $z$ over the torus,
\[
\sum_j |h_j'(x)|
\le
e^{B_0}
\int \sum_j |h_j'(z)|\,dz.
\]
Since the images of the inverse branches partition the Torus,
\[
\int \sum_j |h_j'(z)|\,dz
=
\sum_j \int |h_j'(z)|\,dz
=
\sum_j |h_j(\mathbb{T})|
=
1.
\]
Hence
\[
C_n\le e^{B_0},
\]
uniformly in $n$. Thus, taking
\[
A=e^{B_0},\qquad
B=e^{B_0}(1+B_0),\qquad
\alpha=\lambda^{-1},
\]
with \(0<\alpha<1\), the desired inequality follows.
\end{proof}

\begin{assumption}[\textbf{Convergence to Equilibrium}]{\label{Convergence to Equilibrium}}
    For the operator $(f_{\mu^*})_{*}$ associated with the fixed point, there exists a sequence $a_n \to 0$ such that for all $\mu \in {V}_1$:
\begin{equation}
\|(f^{t}_{\mu^*})_{*}\mu\|_{C^1 } \leq a_t \|\mu\|_{C^1}.
\end{equation}
\begin{proof}
    The map $f_{\mu^*}$ is a uniformly expanding $C^2$ map,  the space $C^1$ is compactly embedded in $C^0$, and the transfer operator $(f_{\mu^*})_{*}$ satisfies a Lasota-Yorke inequality; it holds that for the operator associated with the fixed point, there exists a sequence $a_{t} \to 0$ such that for all $\mu \in V_{1}$:
\begin{equation}
\|(f_{\mu^*}^{t})_{*} \mu\|_{C^1} \leq a_{t} \|\mu\|_{C^1}
\end{equation}
For details, see
\cite[Theorem 30]{galatolo2022statisticalpropertiesdynamicsintroduction}.
\end{proof}
\end{assumption}
\begin{assumption}[\textbf{Regularity of the Operator Family}]
{\label{Regularity of the Operator Family}}
\begin{enumerate}[label=(\roman*)]

\item There exists $C_0 > 0$ such that for any $\mu_1, \mu_2 \in U_1$
and the fixed point $\mu^* \in C^2$,
\begin{equation}{\label{regularity2}}
\|(f_{\mu_1})_{*}\mu^* -(f_{\mu_2})_{*}\mu^*\|_{C^1}
\leq
C_0 \|h\|_{C^2}\|\mu^*\|_{C^2}
\|\mu_1 - \mu_2\|_{C^0}.
\end{equation}
\item There exists $C_1 > 0$ such that for any $\mu \in C^1$
and $\nu_1, \nu_2 \in U_0$,
\begin{equation}{\label{regularity2}}
\|(f_{\nu_1})_{*}\mu - (f_{\nu_2})_{*}\mu\|_{C^0}
\leq
C_1 \|h\|_{C^2}\|\mu\|_{C^1}
\|\nu_1 - \nu_2\|_{C^0}.
\end{equation}

\end{enumerate}
\end{assumption}

\begin{proof}
Let $\rho$ denote the density of $\mu^*$ and, for $j=1,2$, let
$g_{j,i}=f_{\mu_j,i}^{-1}$ be the inverse branches of $f_{\mu_j}$.
Then, using the Frobenius operator,
\[
\mathcal L_{\mu_j}\rho(x)
=
\sum_i
\frac{\rho(g_{j,i}(x))}
     {|f_{\mu_j}'(g_{j,i}(x))|}.
\]
Moreover,
\[
(\mathcal L_{\mu_j}\rho)'(x)
=
\sum_i
\left(
\frac{\rho'(g_{j,i}(x))}
     {(f_{\mu_j}'(g_{j,i}(x)))^2}
-
\frac{\rho(g_{j,i}(x))f_{\mu_j}''(g_{j,i}(x))}
     {(f_{\mu_j}'(g_{j,i}(x)))^3}
\right).
\]
Hence
$$\|\mathcal L_{\mu_1}\rho-\mathcal L_{\mu_2}\rho\|_{C^1}
 \le
\left\|
\sum_i
\left(
\frac{\rho(g_{1,i})}{f_{\mu_1}'(g_{1,i})}-
\frac{\rho(g_{2,i})}{f_{\mu_2}'(g_{2,i})}
\right)\right\|_{\infty}+
\left\|
\sum_i
\left(
\frac{\rho'(g_{1,i})}{(f_{\mu_1}'(g_{1,i}))^2}
-
\frac{\rho'(g_{2,i})}{(f_{\mu_2}'(g_{2,i}))^2}
\right)
\right\|_\infty
$$
$$+\left\|
\sum_i
\left(
\frac{\rho(g_{1,i})f_{\mu_1}''(g_{1,i})}
{(f_{\mu_1}'(g_{1,i}))^3}
-\frac{\rho(g_{2,i})f_{\mu_2}''(g_{2,i})}{(f_{\mu_2}'(g_{2,i}))^3}
\right)
\right\|_\infty$$

For the first term, we write
\[
\frac{\rho(g_{1,i})}{f_{\mu_1}'(g_{1,i})}
-
\frac{\rho(g_{2,i})}{f_{\mu_2}'(g_{2,i})}
=
\frac{\rho(g_{1,i})-\rho(g_{2,i})}
{f_{\mu_1}'(g_{1,i})}
+
\rho(g_{2,i})
\left(
\frac{1}{f_{\mu_1}'(g_{1,i})}
-
\frac{1}{f_{\mu_2}'(g_{2,i})}
\right).
\]
Denote the first, second and third terms above by $A$, $B$, and $C$,
respectively. Using the uniform expansion, we obtain
$$
\|A\|
\leq
\sum_i
\left\|
\frac{\rho(g_{1,i})-\rho(g_{2,i})}
{f_{\mu_1}'(g_{1,i})}
\right\|_\infty+
\sum_i
\left\|
\rho(g_{2,i})
\left(
\frac{1}{f_{\mu_1}'(g_{1,i})}
-
\frac{1}{f_{\mu_2}'(g_{2,i})}
\right)
\right\|_\infty $$$$\leq
\sum_i
\frac{\|\rho'\|_\infty}{\lambda}
\|g_{1,i}-g_{2,i}\|_\infty
+
\sum_i
\frac{\|\rho\|_\infty}{\lambda^2}
\|f_{\mu_1}'(g_{1,i})
-
f_{\mu_2}'(g_{2,i})\|_\infty.
$$
We first prove the required estimates on the inverse branches.
Since
$
f_\mu(x)
=
f(x)+\int h(x,y)\,d\mu(y),
$
we have
\[
f_{\mu_1}(x)-f_{\mu_2}(x)
=
\int h(x,y)\,d(\mu_1-\mu_2)(y).
\]
Hence
$
\|f_{\mu_1}-f_{\mu_2}\|_{C^1}
\leq
\|h\|_{C^1}\|\mu_1-\mu_2\|_{C^0}.
$
Let $g_{j,i}$ be the $i$-th inverse branch of $f_{\mu_j}$, so that
\[
f_{\mu_j}(g_{j,i}(x))=x,
\qquad j=1,2.
\]
Then
$
f_{\mu_1}(g_{1,i}(x))
=
f_{\mu_2}(g_{2,i}(x)).
$
Therefore
$
f_{\mu_1}(g_{1,i}(x))
-
f_{\mu_1}(g_{2,i}(x))
=
f_{\mu_2}(g_{2,i}(x))
-
f_{\mu_1}(g_{2,i}(x)).
$
By uniform expansion and the mean value theorem\footnote{
Throughout the proof, we choose compatible lifts of the maps
$f_{\mu_j}:\mathbb{T}\to\mathbb{T}$ to maps
$F_{\mu_j}:\mathbb{R}\to\mathbb{R}$ of the same degree.
The inverse branches $g_{j,i}$ are understood as the corresponding
lifted inverse branches on a fundamental interval. All estimates
involving differences of inverse branches are performed on these lifts.
},
we get
\[
\lambda |g_{1,i}(x)-g_{2,i}(x)|
\leq
\|f_{\mu_1}-f_{\mu_2}\|_{C^0}.
\]
Thus
\[
\|g_{1,i}-g_{2,i}\|_{\infty}
\leq
\frac{1}{\lambda}
\|f_{\mu_1}-f_{\mu_2}\|_{C^0}
\leq
\frac{1}{\lambda}
\|h\|_{C^1}
\|\mu_1-\mu_2\|_{C^0}.
\]
Next,
$$
\|f_{\mu_1}'\circ g_{1,i}
-
f_{\mu_2}'\circ g_{2,i}\|_\infty
\leq
\|f_{\mu_1}'\circ g_{1,i}
-
f_{\mu_1}'\circ g_{2,i}\|_\infty
\quad+
\|f_{\mu_1}'\circ g_{2,i}
-
f_{\mu_2}'\circ g_{2,i}\|_\infty.
$$
Since the family is uniformly $C^2$, there exists $M>0$ such that
\[
\sup_{\mu\in U_1}\|f_\mu''\|_\infty\leq M.
\]
Hence $f_\mu'$ is uniformly Lipschitz and
\[
\|f_{\mu_1}'\circ g_{1,i}
-
f_{\mu_1}'\circ g_{2,i}\|_\infty
\leq
M\|g_{1,i}-g_{2,i}\|_\infty.
\]
Also,
\[
\|f_{\mu_1}'\circ g_{2,i}
-
f_{\mu_2}'\circ g_{2,i}\|_\infty
\leq
\|f_{\mu_1}'-f_{\mu_2}'\|_\infty
\leq
\|h\|_{C^1}\|\mu_1-\mu_2\|_{C^0}.
\]
Combining the estimates gives
\[
\|f_{\mu_1}'\circ g_{1,i}
-
f_{\mu_2}'\circ g_{2,i}\|_\infty
\leq
C\|h\|_{C^1}\|\mu_1-\mu_2\|_{C^0},
\]
where $C$ is a generic constant independent of $\mu_1,\mu_2$.
Therefore,
\[
\|A\|
\leq
C\|h\|_{C^1}\|\rho\|_{C^1}
\|\mu_1-\mu_2\|_{C^0}.
\]
The terms $B$ and $C$ are estimated in the same way, using the
uniform $C^2$ bounds on the family $\{f_\mu\}$.
For the second term, we write
\[
\frac{\rho'(g_{1,i})}{(f_{\mu_1}'(g_{1,i}))^2}
-
\frac{\rho'(g_{2,i})}{(f_{\mu_2}'(g_{2,i}))^2}
\]
as the sum of the difference in the numerators and the difference in
the denominators. Using the estimate on the inverse branches and the
uniform $C^2$ bounds, we obtain
\[
\|B\|
\leq
C\|h\|_{C^1}\|\rho\|_{C^2}
\|\mu_1-\mu_2\|_{C^0}.
\]
Similarly, for the third term,
\[
\frac{\rho(g_{1,i})f_{\mu_1}''(g_{1,i})}
{(f_{\mu_1}'(g_{1,i}))^3}
-
\frac{\rho(g_{2,i})f_{\mu_2}''(g_{2,i})}
{(f_{\mu_2}'(g_{2,i}))^3}
\]
is decomposed into differences of $\rho$, $f_\mu''$, and the
denominators. Since
\[
\|f_{\mu_1}''-f_{\mu_2}''\|_\infty
\leq
\|h\|_{C^2}\|\mu_1-\mu_2\|_{C^0},
\]
we obtain
\[
\|C\|
\leq
C\|h\|_{C^2}\|\rho\|_{C^2}
\|\mu_1-\mu_2\|_{C^0}.
\]
Combining the three estimates yields
\[
\|\mathcal L_{\mu_1}\rho
-
\mathcal L_{\mu_2}\rho\|_{C^1}
\leq
C_0\|h\|_{C^2}
\|\rho\|_{C^2}
\|\mu_1-\mu_2\|_{C^0}.
\]
Since $\rho$ is the density of $\mu^*$, this proves
\[
\|(f_{\mu_1})_*\mu^*
-
(f_{\mu_2})_*\mu^*\|_{C^1}
\leq
C_0\|h\|_{C^2}
\|\mu^*\|_{C^2}
\|\mu_1-\mu_2\|_{C^0}.
\]
For \eqref{regularity2}, let $\mu\in C^1$ and let $\rho$ denote its density.
Since the family $\{f_\nu:\nu\in U_0\}$ is uniformly bounded in
$C^2$, there exists $M>0$ such that
\[
\sup_{\nu\in U_0}\|f_\nu''\|_\infty\leq M.
\]
Thus $f_\nu'$ is uniformly Lipschitz. Moreover,
$
\|f_{\nu_1}-f_{\nu_2}\|_{C^1}
\leq
\|h\|_{C^1}\|\nu_1-\nu_2\|_{C^0}.
$
Applying the same inverse-branch estimates as above, but only at the
$C^0$ level, gives
\[
\|\mathcal L_{\nu_1}\rho-\mathcal L_{\nu_2}\rho\|_{C^0}
\leq
C_1\|h\|_{C^2}\|\rho\|_{C^1}
\|\nu_1-\nu_2\|_{C^0}.
\]
Since $\rho$ is the density of $\mu$, we obtain
\[
\|(f_{\nu_1})_*\mu-(f_{\nu_2})_*\mu\|_{C^0}
\leq
C_1\|h\|_{C^2}\|\mu\|_{C^1}
\|\nu_1-\nu_2\|_{C^0}.
\]
This proves \eqref{regularity2}.
\end{proof}

\begin{assumption}[\textbf{Differentiability}]{\label{Differentiability}}
Let $\eta \in C^2 \cap U_0$ be a measure. The map $
\mathbb{L} : \mathbf{U}_0 \to \mathbf{C^1}
$
defined by
$
\mathbb{L}(\tilde{\mu}) = (f_{\tilde{\mu}})_{*}\eta$
is Fréchet differentiable at ${\eta}$
means that there exists a bounded linear operator
\[
D_{\eta}\mathbb{L} : V_0 \to V_1
\]
such that   
\begin{equation}{\label{Differentiabilityequ}}
\lim_{\substack{\|\nu\|_{C^0}\to 0\\ \nu\in V_0}}
\frac{
\left\| 
(f_{\eta+\nu})_{*} \eta - (f_{\eta})_{*}\eta - D_{\eta}\mathbb{L}(\nu)
\right\|_{C^1}
}{
\|\nu\|_{C^0}
}
=0.
\end{equation}
\end{assumption}
\begin{proof}
We first show that $\mathbb{L}$ is Gâteaux differentiable at $\eta$. Let
$\nu\in V_{0}$ (the space of zero-average measures). Consider

\[
D_{\eta}(\nu)
=
\lim_{\varepsilon\to0}
\frac{
(f_{\eta+\varepsilon\nu})_{*}\eta
-
(f_{\eta})_{*}\eta
}{\varepsilon}.
\]
Let $\eta(x)$ denote the density of $\eta$ and let
$\varphi \in C^{\infty}(\mathbb{T}, \mathbb{R})$. Then

\[
\int \varphi(x)
\Bigl(
(f_{\eta+\varepsilon\nu})_{*}\eta(x)
-
(f_{\eta})_{*}\eta(x)
\Bigr)\,dx
=
\int \eta(x)
\Bigl(
\varphi(f_{\eta+\varepsilon\nu}(x))
-
\varphi(f_{\eta}(x))
\Bigr)\,dx.
\]
Define
\[
G_x(r)=\varphi(f_{\eta+r\varepsilon\nu}(x)).
\]
Then, by the fundamental theorem of calculus,
\[
G_x(1)-G_x(0)=\int_0^1 G_x'(r)\,dr .
\]
Now,
\[
\frac{d}{dr}
\varphi(f_{\eta+r\varepsilon\nu}(x))
=
\varepsilon
\varphi'(f_{\eta+r\varepsilon\nu}(x))
\int_{\mathbb T}h(x,y)d\nu(y).
\]
Therefore,
\[
\int_{\mathbb{T}} \eta(x)
\left(
\varphi(f_{\eta+\varepsilon\nu}(x))
-
\varphi(f_{\eta}(x))
\right)dx=
\varepsilon
\int_{\mathbb T}
\int_0^1
\eta(x)
\varphi'(f_{\eta+r\varepsilon\nu}(x))
\left(
\int_{\mathbb T}h(x,y)d\nu(y)
\right)
\,dr\,dx .
\]
The integrand is continuous on the compact set 
\(\mathbb T\times[0,1]\), and hence it is integrable. 
Therefore, by Fubini's theorem, we can interchange the order of integration:
\[
\int_{\mathbb{T}}\varphi(x)
\left(
(f_{\eta+\varepsilon\nu})_{*}\eta(x)
-
(f_{\eta})_{*}\eta(x)
\right)dx
=
\varepsilon
\int_0^1
\int_{\mathbb{T}}
\eta(x)
\varphi'(f_{\eta+r\varepsilon\nu}(x))
\left(
\int_{\mathbb T}h(x,y)d\nu(y)
\right)
dx\,dr.
\]
Using the chain rule with respect to the variable $x$ and integrating by parts, 
\[
\int\varphi(x)
\left(
(f_{\eta+\varepsilon\nu})_{*}\eta(x)
-
(f_{\eta})_{*}\eta(x)
\right)dx
= -\varepsilon
\int_0^1
\int_{\mathbb{T}}
\varphi(x)
(f_{\eta+r\varepsilon\nu})_{*}
\left(
\frac{
\eta(x)
\left(
\int_{\mathbb T}h(x,y)d\nu(y)
\right)
}
{f'_{\eta+r\varepsilon\nu}(x)}
\right)'
dxdr.
\]
Therefore,
\[
\frac{
(f_{\eta+\varepsilon\nu})_{*}\eta
-
(f_{\eta})_{*}\eta
}{\varepsilon}
=-\int_0^1
(f_{\eta+r\varepsilon\nu})_{*}
\left(
\frac{
\eta(x)
\left(
\int_{\mathbb T}h(x,y)d\nu(y)
\right)
}
{f'_{\eta+r\varepsilon\nu}(x)}
\right)'dr
\]
Moreover,
\[
\sup_{r\in[0,1]}
\|f_{\eta+r\epsilon\nu}-f_\eta\|_{C^3}
\leq C|\epsilon|\|\nu\|_{C^0}
\longrightarrow 0.
\]
Hence
$
f'_{\eta+r\epsilon\nu}\longrightarrow f'_\eta
\quad\text{in }C^2(\mathbb T),
$
uniformly for \(r\in[0,1]\).
Therefore
\[
\left(
\frac{
\eta(x)\displaystyle\int_{\mathbb T}h(x,y)d\nu(y)
}
{f'_{\eta+r\varepsilon\nu}(x)}
\right)'
\longrightarrow
\left(
\frac{
\eta(x)\displaystyle\int_{\mathbb T}h(x,y)d\nu(y)
}
{f'_{\eta}(x)}
\right)'
\]
in \(C^1(\mathbb T)\), uniformly for \(r\in[0,1]\). This follows from
the \(C^3\) convergence above and the explicit \(C^1\)
transfer-operator formula, since the inverse branches and their first
two derivatives converge uniformly in \(r\). Thus
\[
\sup_{r\in[0,1]}
\left\|
(f_{\eta+r\epsilon\nu})_*
\left(
\frac{\eta(x)\int_{\mathbb T}h(x,y)\,d\nu(y)}
{f'_{\eta+r\epsilon\nu}(x)}
\right)'
-
(f_\eta)_*
\left(
\frac{\eta(x)\int_{\mathbb T}h(x,y)\,d\nu(y)}
{f'_\eta(x)}
\right)'
\right\|_{C^1}
\longrightarrow 0.
\]
Hence,
\[
\left\|
\int_0^1
(f_{\eta+r\varepsilon\nu})_*
\left(
\frac{
\eta(x)\displaystyle\int_{\mathbb T}h(x,y)d\nu(y)
}
{f'_{\eta+r\varepsilon\nu}(x)}
\right)'dr
-
\int_0^1
(f_{\eta})_*
\left(
\frac{
\eta(x)\displaystyle\int_{\mathbb T}h(x,y)d\nu(y)
}
{f'_{\eta}(x)}
\right)'dr
\right\|_{C^{1}}
\longrightarrow0 .
\]
Since the second integrand is independent of $r$,
it implies,
\[
\frac{
(f_{\eta+\varepsilon\nu})_*\eta
-
(f_{\eta})_*\eta
}{\varepsilon}
\longrightarrow
-
(f_{\eta})_*
\left(
\frac{
\eta(x)\displaystyle\int_{\mathbb T}h(x,y)d\nu(y)
}
{f'_{\eta}(x)}
\right)'
\]
in $C^{1}(\mathbb T)$.
The candidate derivative is given by
\begin{equation}{\label{Exactdiffformula}}
D_{\eta}\mathbb{L}(\nu)
=
-(f_{\eta})_{*}
\left(
\frac{
\eta(x)\displaystyle\int_{\mathbb T}h(x,y)d\nu(y)
}
{f'_{\eta}(x)}
\right)' .
\end{equation}
The operator $D_{\eta}\mathbb{L}$ is linear in $\nu$, since the dependence on
$\nu$ appears only through the linear term
\[
\nu\mapsto \int_{\mathbb T}h(x,y)d\nu(y).
\]
Moreover, by the regularity of $h$ and $\eta\in C^2$, and the uniform
expansion of $f_\eta$, there exists $C>0$ such that
\[
\|D_{\eta}\mathbb{L}(\nu)\|_{C^1}
\leq
C\|h\|_{C^2}\|\eta\|_{C^2}\|\nu\|_{C^0}.
\]
Therefore,
\[
D_{\eta}\mathbb{L}:V_0\to V_1
\]
is a bounded linear operator.
It remains to prove the Fréchet differentiability condition. Let
\[
R(\nu)=
(f_{\eta+\nu})_{*}\eta-(f_{\eta})_{*}\eta-D_{\eta}\mathbb{L}(\nu).
\]
Using the integral representation obtained above,
\[
R(\nu)=
-\int_0^1
\left[
(f_{\eta+r\nu})_{*}
\left(
\frac{
\eta(x)\displaystyle\int_{\mathbb T}h(x,y)d\nu(y)}
{f'_{\eta+r\nu}(x)}
\right)'
-
(f_{\eta})_{*}
\left(
\frac{
\eta(x)\displaystyle\int_{\mathbb T}h(x,y)d\nu(y)}
{f'_{\eta}(x)}
\right)'
\right]dr .
\]
Define
$
H_\nu(x):=\int_{\mathbb T}h(x,y)\,d\nu(y).
$
By the regularity of \(h\),
$
\|H_\nu\|_{C^2}\leq C\|\nu\|_{C^0}.
$
The remainder can be written as

$$
R(\nu)
=
-\int_0^1
\left[
(f_{\eta+r\nu})_*
\left(
\frac{\eta H_\nu}{f'_{\eta+r\nu}}
\right)'
-
(f_\eta)_*
\left(
\frac{\eta H_\nu}{f'_\eta}
\right)'
\right]\,dr.
$$
We split the integrand as
$$
(f_{\eta+r\nu})_*
\left(
\frac{\eta H_\nu}{f'_{\eta+r\nu}}
\right)'
-
(f_\eta)_*
\left(
\frac{\eta H_\nu}{f'_\eta}
\right)'
={}
\left[(f_{\eta+r\nu})_*-(f_\eta)_*\right]
\left(
\frac{\eta H_\nu}{f'_{\eta+r\nu}}
\right)'
+
(f_\eta)_*
\left[
\left(
\frac{\eta H_\nu}{f'_{\eta+r\nu}}
\right)'
-
\left(
\frac{\eta H_\nu}{f'_\eta}
\right)'
\right].
$$
Since
$$
f_{\eta+r\nu}(x)-f_\eta(x)
=
r\int_{\mathbb T}h(x,y)\,d\nu(y),
$$
we have
$$
\sup_{r\in[0,1]}
\|f_{\eta+r\nu}-f_\eta\|_{C^3}
\leq C\|\nu\|_{C^0}.
$$
Moreover,
$$
\left\|
\left(
\frac{\eta H_\nu}{f'_{\eta+r\nu}}
\right)'
\right\|_{C^2}
\leq C\|H_\nu\|_{C^2}
\leq C\|\nu\|_{C^0},
$$
uniformly for \(r\in[0,1]\). Hence, by the continuity estimate for the transfer operator,
$$
\left\|
\left[(f_{\eta+r\nu})_*-(f_\eta)_*\right]
\left(
\frac{\eta H_\nu}{f'_{\eta+r\nu}}
\right)'
\right\|_{C^1}
\leq
C
\|f_{\eta+r\nu}-f_\eta\|_{C^3}
\left\|
\left(
\frac{\eta H_\nu}{f'_{\eta+r\nu}}
\right)'
\right\|_{C^2}
\leq
C\|\nu\|_{C^0}^2.
$$
For the second term, uniform expansion implies that
\(f'_{\eta+r\nu}\) and \(f'_\eta\) are uniformly bounded away from zero for \(\|\nu\|_{C^0}\) sufficiently small. Consequently,

$$
\left\|
\frac{1}{f'_{\eta+r\nu}}
-
\frac{1}{f'_\eta}
\right\|_{C^2}
\leq
C
\|f_{\eta+r\nu}-f_\eta\|_{C^3}
\leq C\|\nu\|_{C^0}.
$$
Together with
$$
\|H_\nu\|_{C^2}\leq C\|\nu\|_{C^0},
$$
the product and differentiation estimates give
$$
\left\|
\left(
\frac{\eta H_\nu}{f'_{\eta+r\nu}}
\right)'
-
\left(
\frac{\eta H_\nu}{f'_\eta}
\right)'
\right\|_{C^1}
\leq C\|\nu\|_{C^0}^2.
$$
Since \((f_\eta)_*:C^2\to C^1\) is bounded, it follows that
$$
\left\|
(f_\eta)_*
\left[
\left(
\frac{\eta H_\nu}{f'_{\eta+r\nu}}
\right)'
-
\left(
\frac{\eta H_\nu}{f'_\eta}
\right)'
\right]
\right\|_{C^1}
\leq C\|\nu\|_{C^0}^2.
$$
Therefore, uniformly for \(r\in[0,1]\),
$$
\left\|
(f_{\eta+r\nu})_*
\left(
\frac{\eta H_\nu}{f'_{\eta+r\nu}}
\right)'
-
(f_\eta)_*
\left(
\frac{\eta H_\nu}{f'_\eta}
\right)'
\right\|_{C^1}
\leq C\|\nu\|_{C^0}^2.
$$
Hence
$$
\|R(\nu)\|_{C^1}
\leq
\int_0^1 C\|\nu\|_{C^0}^2\,dr
=
C\|\nu\|_{C^0}^2.
$$
Consequently,
$$
\frac{
\|(f_{\eta+\nu})_*\eta-(f_\eta)_*\eta-D_\eta L(\nu)\|_{C^1}
}{
\|\nu\|_{C^0}
}
\leq
C\|\nu\|_{C^0}
\longrightarrow0
\qquad\text{as }\|\nu\|_{C^0}\to0.
$$
Thus \(\mathbb{L}\) is Fréchet differentiable at \(\eta\), with
$$
D_\eta \mathbb{L}(\nu)
=
-(f_\eta)_*
\left(
\frac{\eta H_\nu}{f'_\eta}
\right)'.
$$
\end{proof}
\begin{assumption}[\textbf{Boundedness}]{\label{Boundedness}}
The operator
\[
D_{\mu^{*}}\mathbb{L} : V_0 \to C^2
\] is bounded.
\end{assumption}
\begin{proof}
    Since \(f_{\mu^*} \in C^4\), the fixed point density belongs to \(C^3(\mathbb{T}, \mathbb{R})\), the rest follows from $\eqref{Exactdiffformula}$.
\end{proof}

\begin{proof}[\textbf{Proof of lemma \ref{different frechet}}]

From assumption $\ref{Differentiability}$ it follows that $\mathbb{L}$ is Fréchet differentiable at $\eta$ 
and its derivative
\[
D_\eta\mathbb{L}:V_0\to V_1
\]
satisfies
\[
\|D_\eta\mathbb{L}\|_{V_0\to V_1}
\leq C\|h\|_{C^2}\|\eta\|_{C^2}.
\]
It remains to prove that the self-consistent transfer operator $\mathcal{F}$ is
Fréchet differentiable at $\eta$ from the strong space to the weak space.
We take as a candidate for the derivative
\[
D_\eta\mathcal{F}:V_1 \to V_1 ~~\text{such that}~~ D_\eta\mathcal{F}=(f_\eta)_*+D_\eta\mathbb{L}.
\]
Let $\nu \in V_1$. We write
\[
\begin{aligned}
\mathcal{F}(\eta+\nu)-\mathcal{F}(\eta)-D_\eta\mathcal{F}(\nu)
=&(f_{\eta+\nu})_*(\eta+\nu)
-(f_\eta)_*\eta
-(f_\eta)_*\nu
-D_\eta\mathbb{L}(\nu).
\end{aligned}
\]
Expanding the pushforward of $\eta + \nu$ by linearity and regrouping
\[
\mathcal{F}(\eta+\nu)-\mathcal{F}(\eta)-D_\eta\mathcal{F}(\nu)=
(f_{\eta+\nu})_*\eta-(f_\eta)_*\eta
-D_\eta\mathbb{L}(\nu)+(f_{\eta+\nu})_*\nu-(f_\eta)_*\nu .
\]
By the Fréchet differentiability of $\mathbb{L}$ from $V_0$ to $V_1$,
\[
\frac{
\|(f_{\eta+\nu})_*\eta-(f_\eta)_*\eta
-D_\eta\mathbb{L}(\nu)\|_{C^0}}
{\|\nu\|_{C^1}}
\to 0 .
\]
Moreover, from \eqref{regularity2}
\[
\frac{
\|(f_{\eta+\nu})_*\nu-(f_\eta)_*\nu\|_{C^0}}
{\|\nu\|_{C^1}}
\to 0 .
\]
Therefore,
\[
\lim_{\|\nu\|_{C^1}\to0}
\frac{
\|\mathcal{F}(\eta+\nu)-\mathcal{F}(\eta)
-D_\eta\mathcal{F}(\nu)\|_{C^0}}
{\|\nu\|_{C^1}}
=0,
\]
which proves that $\mathcal{F}$ is Fréchet differentiable at $\eta$
from the strong space $V_1$ to the weak space $V_0$\footnote{Although $\mathcal{F}$ is Fréchet differentiable at $\eta$ in the
strong-to-weak sense, its derivative is a bounded linear operator
\[
D_\eta\mathcal{F}:V_1\to V_1 .
\]}.
\end{proof}

\begin{proof}[\textbf{Proof of Lemma \ref{contractionfor theorem}}]
The result follows from \cite[Lemma 19]{castorrini2025differential}, once we
verify its assumptions uniformly for sufficiently small couplings. We first note that the sequential Lasota--Yorke inequalities in
Assumption~\ref{Sequential Lasota–Yorke Inequalities} hold uniformly for
sufficiently small couplings. Indeed, the proof of
Assumption~\ref{Sequential Lasota–Yorke Inequalities} uses only the uniform
expansion bound and uniform $C^2$ bounds on the family $\{f_\mu\}$. By taking
$\delta>0$ sufficiently small, these bounds can be chosen uniformly for all
$\|h\|_{C^3}\leq\delta$. Hence, the constants $A$, $B$, and
$\alpha<1$ in \eqref{lasota1} and \eqref{lasota2} are independent of $h$, of
the sequence $(\mu_i)$, and of its length.
\noindent The convergence-to-equilibrium estimate in
Assumption~\ref{Convergence to Equilibrium} can likewise be chosen uniformly
for $\|h\|_{C^3}\leq\delta$. 
Indeed, the uniform Lasota--Yorke inequality, together with the compact embedding \(C^1(\mathbb T)\hookrightarrow C^0(\mathbb T)\) and the uniform expansion of the family \(\{f_{\mu_h^*}\}\), gives a uniform spectral gap for the corresponding transfer operators on \(C^1(\mathbb T)\). Consequently, there exists a sequence \((a_t)_{t\geq1}\), independent of \(h\), with \(a_t\to0\), such that
$$
\|(f_{\mu_h^*}^t)_*\nu\|_{C^1}
\leq a_t\|\nu\|_{C^1},
\qquad \nu\in V_1,\quad \|h\|_{C^3}\leq\delta.
$$
Thus, the convergence-to-equilibrium condition also holds uniformly\footnote{For sufficiently small \(\|h\|_{C^3}\), the frozen maps \(f_{\mu_h^*}\) remain uniformly expanding and their Lasota--Yorke constants can be chosen independently of \(h\). Consequently, after reducing \(\delta\) if necessary, the decay constants \(C>0\) and \(0<\theta<1\) in \(\|(f_{\mu_h^*}^t)_*\nu\|_{C^1}\le C\theta^t\|\nu\|_{C^1}\) may also be chosen independently of \(h\).}
 over the
family of sufficiently small couplings.
It remains to verify the strong-to-weak closeness condition uniformly in $h$.
Since $\mu_0^h$ belongs to the local basin of attraction of $\mu_h^*$, we have
\[
\|\mu_j^h-\mu_h^*\|_{C^1}
\leq
C e^{-\gamma j}\|\mu_0-\mu_h^*\|_{C^1},
\]
where $\mu_j^h=F_h^j(\mu_0)$. For sufficiently small $\delta>0$, the
constants $C,\gamma>0$ can be chosen uniformly for
$\|h\|_{C^3}\leq\delta$. In particular,
\[
\sup_{\|h\|_{C^3}\leq\delta}
\|\mu_j^h-\mu_h^*\|_{C^1}
\longrightarrow0
\qquad\text{as }j\to\infty.
\]
By \eqref{regularity2}, for every $\mu\in V_1$,
\[
\|(f_{\mu_j^h})_*\mu-(f_{\mu_h^*})_*\mu\|_{C^0}
\leq
C_1\|h\|_{C^2}\|\mu\|_{C^1}
\|\mu_j^h-\mu_h^*\|_{C^0}.
\]
Consequently,
\[
\|(f_{\mu_j^h})_*-(f_{\mu_h^*})_*\|_{C^1\to C^0}
\leq
C_1\|h\|_{C^2}
\|\mu_j^h-\mu_h^*\|_{C^1}.
\]
Since $\|h\|_{C^2}\leq\|h\|_{C^3}\leq\delta$, it follows that
\[
\sup_{\|h\|_{C^3}\leq\delta}
\|(f_{\mu_j^h})_*-(f_{\mu_h^*})_*\|_{C^1\to C^0}
\longrightarrow0.
\]
Therefore, for every $\varepsilon>0$, there exists $J\in\mathbb N$,
independent of $h$, such that
\[
\|(f_{\mu_j^h})_*-(f_{\mu_h^*})_*\|_{C^1\to C^0}
\leq\varepsilon,
\qquad
j\geq J,\quad
\|h\|_{C^3}\leq\delta.
\]
Thus, the strong-to-weak closeness condition holds uniformly for all
sufficiently small couplings.
We can therefore apply \cite[Lemma 19]{castorrini2025differential} to the
tail sequence $\bigl((f_{\mu_j})_*\bigr)_{j\geq J}$. Since all three
assumptions have been verified with constants independent of $h$, there exist
constants $C,\lambda>0$, independent of $h$ and $j$, such that, for every
$t\in\mathbb N$,
\[
\left\|
\left(
f_{\mu_{j+t-1}}\circ\cdots\circ f_{\mu_j}
\right)_*\nu
\right\|_{C^1}
\leq
Ce^{-\lambda t}\|\nu\|_{C^1},
\qquad
j\geq J,\quad \nu\in V_1.
\]
Choose $\tau\in\mathbb N$ sufficiently large that
\[
Ce^{-\lambda\tau}<1,
\]
and set
\[
q_0:=Ce^{-\lambda\tau}<1.
\]
Taking $t=\tau$ in the above estimate, we obtain, for every $j\geq J$,
\[
\left\|
\left(
f_{\mu_{j+\tau-1}}\circ\cdots\circ f_{\mu_j}
\right)_*
\right\|_{V_1\to V_1}
\leq q_0.
\]
Hence,
\[
\sup_{j\geq J}
\left\|
\left(
f_{\mu_{j+\tau-1}}\circ\cdots\circ f_{\mu_j}
\right)_*
\right\|_{V_1\to V_1}
\leq q_0<1.
\]
This proves the lemma.
\end{proof}

\subsection{Examples of observables satisfying the sublevel estimate}

We verify that the sublevel-set condition in Assumption~\ref{sameLyapunov}
is satisfied by some class of smooth observables. The key point is that the condition prevents the partial derivatives of the observable from remaining close to zero on sets of too large measure.
\begin{example}
Typical examples satisfying the above sublevel condition on $\mathbb{T}^N$ are
\[
\Psi_1(x)
=
\frac{1}{N}\sum_{j=1}^N
\bigl(1-\cos(2\pi x_j)\bigr)^2
\]
and
\[
\Psi_2(x)
=
\frac{1}{N}\sum_{j=1}^N
\sin(2\pi x_j).
\]
For $\Psi_1$, we have
\[
\partial_{x_i}\Psi_1(x)
=
\frac{4\pi}{N}
\bigl(1-\cos(2\pi x_i)\bigr)\sin(2\pi x_i).
\]
Near $x_i=0$, this derivative vanishes to order three, while near
$x_i=\frac12$ it vanishes to order one. Hence, for sufficiently small
$r>0$,
\[
m\left(
\left\{x\in\mathbb{T}^N:
|\partial_{x_i}\Psi_1(x)|\le r
\right\}
\right)
\le C_i r^{1/3}.
\]
Thus $\Psi_1$ satisfies the sublevel condition with
$\alpha_i=\frac13$.  For $\Psi_2$, we have
\[
\partial_{x_i}\Psi_2(x)
=
\frac{2\pi}{N}\cos(2\pi x_i).
\]
The zeros of $\cos(2\pi x_i)$ are simple, and consequently, for
sufficiently small $r>0$,
\[
m\left(
\left\{x\in\mathbb{T}^N:
|\partial_{x_i}\Psi_2(x)|\le r
\right\}
\right)
\le \widetilde C_i r.
\]
Thus $\Psi_2$ satisfies the sublevel condition with $\alpha_i=1$.
\end{example}
\begin{proposition}[Sublevel estimate for analytic observables]
Let 
$
\psi:\mathbb{T}^2\to\mathbb{R}
$
be a real analytic function, and assume that
\[
\partial_{x_i}\psi\not\equiv 0,
\qquad i=1,2 .
\]
Then, for each \(i=1,2\), there exist constants \(C_i>0\) and
\(\alpha_i>0\) such that
\[
m\left(
\left\{
\mathbf{x}\in\mathbb{T}^2:
|\partial_{x_i}\psi(\mathbf{x})|\le r
\right\}
\right)
\le C_i r^{\alpha_i}
\]
for all sufficiently small \(r>0\), where \(m\) denotes the Lebesgue
measure on \(\mathbb{T}^2\).
\end{proposition}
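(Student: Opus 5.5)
The plan is a Łojasiewicz-type sublevel estimate for a nonzero real analytic function on a compact manifold. Fix $i$ and set $g:=\partial_{x_i}\psi$. It is real analytic on $\mathbb{T}^2$ (lift to a $\mathbb{Z}^2$-periodic analytic function on $\mathbb{R}^2$) and $g\not\equiv 0$. I do not plan to treat the full two-dimensional zero set directly. Instead I reduce to one-variable analytic functions by slicing, and then use compactness.

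\textbf{Step 1: one-dimensional model.} The basic estimate I want is the following. Let $\phi$ be real analytic on a neighbourhood of an interval $I$ and satisfy $|\phi^{(k)}|\ge c>0$ on $I$ for some $k\ge 1$. Then
\[
|\{s\in I:|\phi(s)|\le r\}|\le C_k (r/c)^{1/k}.
\]
This is the classical van der Corput-type sublevel lemma (proved by induction on $k$ via Rolle). For $k=0$ the set is empty once $r<c$.

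\textbf{Step 2: uniform local nondegeneracy.} I claim that for every point $p\in\mathbb{T}^2$ there exist a unit direction $e$, an integer $k=k(p)\ge 0$ and a neighbourhood $W_p$ on which $|\partial_e^k g|\ge c_p>0$.
\begin{itemize}
\item If $g(p)\neq 0$, take $k=0$.
\item Otherwise, $g\not\equiv0$ together with analyticity and connectedness of $\mathbb{T}^2$ implies that $g$ does not vanish to infinite order at $p$. Its Taylor expansion then has a lowest-order nonzero homogeneous part $P_m$, of degree $m\ge 1$.
\item Choose $e$ with $P_m(e)\neq 0$. Then $\partial_e^m g(p)=m!\,P_m(e)\neq 0$, and by continuity $|\partial_e^m g|\ge c_p$ near $p$.
\end{itemize}

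\textbf{Step 3: Fubini on each patch.} Rotate coordinates so that $e$ is a coordinate axis. Shrink $W_p$ to a parallelogram foliated by segments parallel to $e$. On each segment apply Step 1 with $k=m$, and integrate over the transverse variable. This gives
\[
m(\{x\in W_p:|g(x)|\le r\})\le C_p r^{1/k(p)},
\]
with the convention that the bound is $0$ for small $r$ when $k=0$.

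\textbf{Step 4: compactness.} Cover $\mathbb{T}^2$ by finitely many $W_{p_1},\dots,W_{p_n}$. Set $\alpha_i=\min_j 1/\max(k(p_j),1)$ and $C_i=\sum_j C_{p_j}$. For $r\le 1$ we have $r^{1/k}\le r^{\alpha_i}$, so summing the local bounds gives the claim for all sufficiently small $r$.

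The main obstacle is Step 2: ensuring that analyticity plus $g\not\equiv0$ rules out infinite-order vanishing at every point. The argument is the identity theorem. The set of points where all derivatives vanish is closed, and it is open by analyticity. Since $\mathbb{T}^2$ is connected, this set is either empty or everything, and the latter would force $g\equiv0$. The rest is routine Fubini and compactness bookkeeping.
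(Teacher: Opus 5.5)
Your argument is correct, but it takes a different route from the paper's. The paper works globally with the zero set $Z_i$ of $g_i=\partial_{x_i}\psi$. It invokes the \L{}ojasiewicz inequality $\operatorname{dist}(\mathbf{x},Z_i)^\theta\le C|g_i(\mathbf{x})|$ to place the sublevel set inside an $O(r^{1/\theta})$-neighbourhood of $Z_i$. It then quotes a volume bound $C'\varepsilon^\beta$ for tubular neighbourhoods of compact analytic sets, which gives $\alpha_i=\beta/\theta$.

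You never use the geometry of $Z_i$. Analyticity enters only through the identity theorem, to get finite-order vanishing at every point. You then combine the directional nondegeneracy $|\partial_e^m g|\ge c_p$ with the one-dimensional van der Corput sublevel lemma, Fubini along lines parallel to $e$, and a finite cover.

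What your route buys:
\begin{itemize}
\item It is self-contained modulo an elementary Rolle-type lemma, rather than resting on two quoted results from real-analytic geometry.
\item It proves the estimate for any smooth function that vanishes to finite order at every point, not only analytic ones.
\item It extends verbatim to $\mathbb{T}^d$.
\item It gives an explicit exponent $\alpha_i=1/M$, where $M$ is the maximal order of vanishing of $g_i$ (finite by compactness). This agrees with the exponent $1/3$ the paper finds by hand for $\Psi_1$.
\end{itemize}

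The paper's route is shorter to write once those black boxes are accepted. It also records a quantitative link between $|g_i|$ and the distance to its zero set. However, the exponent $\beta/\theta$ it produces is not explicit.
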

\begin{proof}
Fix \(i\in\{1,2\}\) and define
\[
g_i(\mathbf{x})=\partial_{x_i}\psi(\mathbf{x}).
\]
Since \(\psi\) is real analytic, \(g_i\) is also real analytic on the
compact analytic manifold \(\mathbb{T}^2\). Moreover, by assumption,
\(g_i\not\equiv0\).
Let
\[
Z_i=\{\mathbf{x}\in\mathbb{T}^2:g_i(\mathbf{x})=0\}
\]
be the zero set of \(g_i\). Since \(g_i\) is a non-trivial real analytic
function, \(Z_i\) is a proper analytic subset of \(\mathbb{T}^2\).
The Łojasiewicz inequality implies that there exist constants
\(C>0\) and \(\theta>0\) such that, for all
\(\mathbf{x}\in\mathbb{T}^2\),
\[
\operatorname{dist}(\mathbf{x},Z_i)^\theta
\le C |g_i(\mathbf{x})|.
\]
Therefore, if
$
|g_i(\mathbf{x})|\le r,
$
then
\[
\operatorname{dist}(\mathbf{x},Z_i)
\le C^{1/\theta}r^{1/\theta}.
\]
Consequently,
\[
\{\mathbf{x}:|g_i(\mathbf{x})|\le r\}
\subset
\left\{
\mathbf{x}:
\operatorname{dist}(\mathbf{x},Z_i)
\le C^{1/\theta}r^{1/\theta}
\right\}.
\]
By the standard volume estimates for tubular neighbourhoods of compact
real analytic sets, there exist constants \(C'>0\) and \(\beta>0\) such
that
\[
m\left(
\left\{
\mathbf{x}:
\operatorname{dist}(\mathbf{x},Z_i)\le \varepsilon
\right\}
\right)
\le C'\varepsilon^\beta
\]
for all sufficiently small \(\varepsilon>0\). Hence, taking
$
\varepsilon=C^{1/\theta}r^{1/\theta},
$
we obtain
\[
m\left(
\{\mathbf{x}:|g_i(\mathbf{x})|\le r\}
\right)
\le
C'(C^{1/\theta}r^{1/\theta})^\beta .
\]
Therefore,
\[
m\left(
\{\mathbf{x}:|g_i(\mathbf{x})|\le r\}
\right)
\le C_i r^{\alpha_i},
\]
where
$
\alpha_i=\frac{\beta}{\theta}>0.
$
This proves the desired sublevel estimate.
\end{proof}

\bibliographystyle{alpha}
\bibliography{sample}

\newcommand{\etalchar}[1]{$^{#1}$}
\begin{thebibliography}{TCG{\etalchar{+}}11}

\bibitem[CFVV99]{cencini1999macroscopic}
M~Cencini, Massimo Falcioni, D~Vergni, and A~Vulpiani.
\newblock Macroscopic chaos in globally coupled maps.
\newblock {\em Physica D: Nonlinear Phenomena}, 130(1-2):58--72, 1999.

\bibitem[CGT25]{castorrini2025differential}
Roberto Castorrini, Stefano Galatolo, and Matteo Tanzi.
\newblock The differential of self-consistent transfer operators and the local
  convergence to equilibrium of mean field strongly coupled dynamical systems.
\newblock {\em Journal of Nonlinear Science}, 35(4):78, 2025.

\bibitem[CGT26]{castorrini2026stability}
Roberto Castorrini, Stefano Galatolo, and Matteo Tanzi.
\newblock Stability of fixed points for nonlinear selfconsistent transfer
  operators via cone contractions: R. castorrini et al.
\newblock {\em Journal of Statistical Physics}, 193(2):27, 2026.

\bibitem[Gal22a]{galatolo2022self}
Stefano Galatolo.
\newblock Self-consistent transfer operators: Invariant measures, convergence
  to equilibrium, linear response and control of the statistical properties: S.
  galatolo.
\newblock {\em Communications in Mathematical Physics}, 395(2):715--772, 2022.

\bibitem[Gal22b]{galatolo2022statisticalpropertiesdynamicsintroduction}
Stefano Galatolo.
\newblock Statistical properties of dynamics. introduction to the functional
  analytic approach, 2022.

\bibitem[Kan89]{kaneko1989chaotic}
Kunihiko Kaneko.
\newblock Chaotic but regular posi-nega switch among coded attractors by
  cluster-size variation.
\newblock {\em Physical Review Letters}, 63(3):219, 1989.

\bibitem[Kan90a]{kaneko1990clustering}
Kunihiko Kaneko.
\newblock Clustering, coding, switching, hierarchical ordering, and control in
  a network of chaotic elements.
\newblock {\em Physica D: Nonlinear Phenomena}, 41(2):137--172, 1990.

\bibitem[Kan90b]{kaneko1990globally}
Kunihiko Kaneko.
\newblock Globally coupled chaos violates the law of large numbers but not the
  central-limit theorem.
\newblock {\em Physical review letters}, 65(12):1391, 1990.

\bibitem[Kan92]{kaneko1992mean}
Kunihiko Kaneko.
\newblock Mean field fluctuation of a network of chaotic elements: Remaining
  fluctuation and correlation in the large size limit.
\newblock {\em Physica D: Nonlinear Phenomena}, 55(3-4):368--384, 1992.

\bibitem[Kan95]{kaneko1995remarks}
Kunihiko Kaneko.
\newblock Remarks on the mean field dynamics of networks of chaotic elements.
\newblock {\em Physica D: Nonlinear Phenomena}, 86(1-2):158--170, 1995.

\bibitem[KY10]{koiller2010coupled}
Jos{\'e} Koiller and Lai-Sang Young.
\newblock Coupled map networks.
\newblock {\em Nonlinearity}, 23(5):1121--1141, 2010.

\bibitem[OY03]{ott2003learning}
William Ott and James~A Yorke.
\newblock Learning about reality from observation.
\newblock {\em SIAM Journal on Applied Dynamical Systems}, 2(3):297--322, 2003.

\bibitem[PK94a]{pikovsky1994collective}
Arkady~S Pikovsky and J{\"u}rgen Kurths.
\newblock Collective behavior in ensembles of globally coupled maps.
\newblock {\em Physica D: Nonlinear Phenomena}, 76(4):411--419, 1994.

\bibitem[PK94b]{pikovsky1994globally}
Arkady~S Pikovsky and J{\"u}rgen Kurths.
\newblock Do globally coupled maps really violate the law of large numbers?
\newblock {\em Physical review letters}, 72(11):1644, 1994.

\bibitem[SK97]{shibata1997heterogeneity}
Tatsuo Shibata and Kunihiko Kaneko.
\newblock Heterogeneity-induced order in globally coupled chaotic systems.
\newblock {\em EPL (Europhysics Letters)}, 38(6):417--422, 1997.

\bibitem[SK98]{shibata1998collective}
Tatsuo Shibata and Kunihiko Kaneko.
\newblock Collective chaos.
\newblock {\em Physical review letters}, 81(19):4116, 1998.

\bibitem[Tan22]{tanzi2022meanfieldcoupledsystemsselfconsistent}
Matteo Tanzi.
\newblock Mean-field coupled systems and self-consistent transfer operators: A
  review, 2022.

\bibitem[TC13]{takeuchi2013collective}
Kazumasa~A Takeuchi and Hugues Chat{\'e}.
\newblock Collective lyapunov modes.
\newblock {\em Journal of Physics A: Mathematical and Theoretical},
  46(25):254007, 2013.

\bibitem[TCG{\etalchar{+}}11]{takeuchi2011extensive}
Kazumasa~A Takeuchi, Hugues Chat{\'e}, Francesco Ginelli, Antonio Politi, and
  Alessandro Torcini.
\newblock Extensive and subextensive chaos in globally coupled dynamical
  systems.
\newblock {\em Physical review letters}, 107(12):124101, 2011.

\bibitem[TGC09]{takeuchi2009lyapunov}
Kazumasa~A Takeuchi, Francesco Ginelli, and Hugues Chat{\'e}.
\newblock Lyapunov analysis captures the collective dynamics of large chaotic
  systems.
\newblock {\em Physical review letters}, 103(15):154103, 2009.

\bibitem[Via14]{viana2014lectures}
Marcelo Viana.
\newblock {\em Lectures on Lyapunov Exponents}, volume 145 of {\em Cambridge
  Studies in Advanced Mathematics}.
\newblock Cambridge University Press, 2014.

\bibitem[VLP21]{velasco2021nonuniversal}
David Velasco, Juan~M L{\'o}pez, and Diego Paz{\'o}.
\newblock Nonuniversal large-size asymptotics of the lyapunov exponent in
  turbulent globally coupled maps.
\newblock {\em arXiv preprint arXiv:2110.01949}, 2021.

\bibitem[WH89]{wiesenfeld1989attractor}
Kurt Wiesenfeld and Peter Hadley.
\newblock Attractor crowding in oscillator arrays.
\newblock {\em Physical Review Letters}, 62(12):1335, 1989.

\end{thebibliography}
\end{document}